\theoremstyle{definition}
\newtheorem{theorem}{Theorem}
\newtheorem{lemma}{Lemma}
\newtheorem{definition}{Definition}
\newtheorem{example}{Example}
\newcounter{nex}
\newcommand{\CC}{\mathbb{C}}
\DeclareMathOperator{\ind}{ind}
\DeclareMathOperator{\matt}{\bf mat}
\DeclareMathOperator{\diag}{diag}
\DeclareMathOperator{\squeeze}{squeeze}
\newcommand{\ac}{\mathcal{P}}
\newcommand{\bc}{\mathcal{B}}
\newcommand{\cc}{\mathcal{C}}
\newcommand{\dc}{\mathcal{D}}
\newcommand{\gc}{\mathcal{G}}
\newcommand{\hc}{\mathcal{H}}
\newcommand{\ic}{\mathcal{I}}
\newcommand{\pc}{\mathcal{Y}}
\newcommand{\uc}{\mathcal{U}}
\newcommand{\vc}{\mathcal{V}}
\newcommand{\xc}{\mathcal{X}}
\newcommand{\yc}{\mathcal{Y}}
\newcommand{\zc}{\mathcal{Z}}
\newcommand{\wc}{\mathcal{W}}
\begin{document}
\begin{spacing}{1.1}

\title{A study on the 1-$\Gamma$ inverse of  tensors via the M-Product}

\author{
Siran Chen,\thanks{School of Mathematical Sciences, Guangxi Minzu University, 530006,
Nanning, PR China. { E-mail address}:  18877545298@163.com}\quad
Hongwei Jin,\thanks{School of Mathematical Sciences, Guangxi Minzu University, 530006, Nanning, PR China; Center for Applied Mathematics of Guangxi, Guangxi Minzu University, Nanning, 530006, PR China. { E-mail address}:  jhw$\_$math@126.com} \quad
Shaowu Huang,\thanks{Corresponding author. School of Mathematics and Finance, Putian University, Putian, PR China. { E-mail address}:  shaowu2050@126.com}
\quad
Julio Ben\'{\i}tez
	\thanks{Departamento de Matem\'atica Aplicada,
	Instituto de Matem\'atica Multidisciplinar,
	Universitat Polit\'{e}cnica de Val\'{e}ncia, Camino de Vera s/n,
	46022, Valencia, Spain.
	{E-mail addresses}: jbenitez@mat.upv.es }}

\date{}

\maketitle

\begin{abstract}
 In this paper, we will study the issue about the 1-$\Gamma$ inverse, where $\Gamma\in\{\dag,\ D,\ *\}$,   via the M-product. The aim of the current study is threefold. Firstly, the definition  and characteristic of the 1-$\Gamma$ inverse is introduced. Equivalent conditions for a tensor to be a 1-$\Gamma$ inverse are established. Secondly, using the singular value decomposition, the corresponding numerical algorithms for computing the 1-$\Gamma$ inverse are given.
Finally, the solutions of the multilinear equations related 1-$\Gamma$ inverse are studied, and numerical calculations are given to verify our conclusions.~\\

\noindent{\bf Keywords}: M-Product, 1-$\Gamma$ inverse, Moore-Penrose inverse, Tensor equations \\

\noindent AMS classification: 15A18, 15A69.
\end{abstract}

% % % % % % % % % % % % % % % % % % % % %

\section{Introduction}
A tensor is a multidimensional array and a first-order tensor is a vector, a second-order tensor is a matrix, and tensors of third or higher order are called higher-order tensors. A tensor takes the form
$\ac=\left(\ac_{{i_1}i_2\ldots{i_m}}\right)\in\mathbb{C}^{{n_1}\times{n_2}\times\ldots{n_m}}$ with entries over the complex field.  The positive integer $m$ is the order of the tensor $\ac$.
Higher-order tensors have been used in various fields, such as robust tensor PCA {\rm \cite{kong1,L3}}, low-rank tensor recovery {\rm \cite{Che,QW}}, image processing {\rm \cite{KBHH,MSL,Soltani,Tarzanagh}}, computer vision {\rm \cite{4,5}}, signal processing {\rm \cite{1,RU,2,3}}, face recognition {\rm \cite{KBHH,Chen}}, date completion and denoising {\rm \cite{Hu,Long,A2}} and so on.

%Kolda {\it et al.} gave a decomposition of higher-order tensors and their applications in \cite{KB}.
%Kilmer {\it et al.} in \cite{KM} explored an alternative representation of tensors associated with the matrix decomposition, proposed a new decomposition of tensors as the product of tensors, and defined the closed multiplication operation between tensors.
%The tensor neural network model for generating the tensor singular value decomposition was studied in \cite{Wang}.
%Kilmer {\it et al.} in \cite{K3} investigated tensor-tensor products with application to optimal representation and compression.

Tensor multiplication is a basic and critical operation similar to matrix multiplication, which has attracted considerable attention in various scientific disciplines.
Kilmer {\it et al.} {\rm \cite{K1}} first proposed the T-product.
Jin {\it et al.} {\rm \cite{J1}} proposed the cosine change product.
Kernfeld {\it et al.} {\rm \cite{KK}} proposed the M-product, and Einstein {\rm \cite{A1}} proposed the Einstein product.
Shao {\rm \cite{Shao1}} proposed the general product.
The $n$-mode product has been proposed by Qi {\rm \cite{L1}}.
Kilmer {\it et al.} {\rm \cite{Perrone}} introduced a new form of tensor multiplication that allows the representation of third-order tensors as products of other third-order tensors.
Lund {\rm \cite{Lund}} gave a definition of T-functions based on the T-product of the third-order F-square tensor. T-functions were used for fast deep learning in stable tensor neural networks {\rm \cite{Newman}}.
%Special T-functions such as tensor powers were used to compute the eigenvalues of tensors {\rm \cite{Gleich}} in Arnoldi's method.

Miao {\it et al.} {\rm \cite{Miao2}} studied the tensor similar relationship and then investigated the T-Jordan canonical form based on the T-product of a tensor. Meanwhile, the T-QR, T-LU, T-polar and T-Schur decompositions of tensors were established. Besides, the T-group inverse and T-Drazin inverse were also studied.
Sun {\it et al.} defined the \{i\}-inverse and group inverse based on  the general product of the tensor, and studied the properties of the generalized inverse of the tensor, and also defined the Moore-Penrose inverse of the tensor by Einstein product, and obtained the explicit representation of the Moore-Penrose inverse of the tensor block {\rm \cite{Sun}}.
Ji {\it et al.} {\rm \cite{Ji2}} generalized the concept of the Drazin inverse of a square matrix to even order square tensor. The expression of the Drazin inverse was obtained by the kernel-nilpotent decomposition.
Behera {\it et al.} {\rm \cite{Nandi}} further elaborated the Drazin inverse and W-weighted Drazin inverse theory of tensors. In addition, different types of methods are established to compute the Drazin inverse of tensors.
Liang {\it et al.} \cite{Liang} generalized the concept of  the MP inverse of matrices to further results of the Einstein product in the case of tensors and got its application to tensor approximation problems.
The authors of \cite{Che1} studied the  calculation of a complete orthogonal decomposition of a third-order tensor, known as the T-URV decomposition.
Cong {\it et al.} in \cite{Cong} showed that the Moore-inverse of the tensor can be represented by T-SVD with T-product, and gave the equivalent conditions for the stable perturbation of the MP inverse.
Further results on the generalized inverse of tensors on the Einstein product were investigated in \cite{Behera}.
Cong {\it et al.} studied the characterization and perturbation of the EP inverse of the tensor kernel based on the T-product \cite{Cong2}.
The core and core-EP inverses of tensors was studied in \cite{Sahoo}.

One of the crucial operations for handling data and information in multidimensional space is the M-product of tensors.
Jin {\it et al.} \cite{JXWL} studied the MP inverse of tensors with M-product.
They also gave the conditions for understanding it as the least square solution or minimum norm solution.
The authors of \cite{SPBS} introduced the concepts of the Drazin inverse and core-EP inverse of tensors by  the M-product and had a further study on the CMP, DMP, and MPD inverses of tensors.
Panigrahy {\it et al.} \cite{Panigrahy} proposed a new full-rank decomposition method based on M-QDR decomposition of the third-order tensor of the M-product.

In this paper, the tensor 1-$\Gamma$ inverse extending through the M-product will be examined. This article is organized as follows. The words and symbols required for this work are initially provided in the second section, after which we introduce the M-product between two tensors. In the third section, we provide the 1-MP inverse under the M-product and provide the equivalent expression for the 1-MP inverse by using the singular value decomposition. After that, we go over a rough description of the 1-MP inverse.  Simultaneously, we examine the tensor 1-D inverse and provide the associated numerical algorithm for computing it. Additionally, we provide a representation of this component tensor inverse. Lastly, the tensor is 1-Star inverse under the M-product is explained. The corresponding attributes are provided along with the establishment of the 1-Star inverse numerical calculation method. The solutions to the multilinear equations concerning the 1-MP, 1-D, and 1-star inverses are provided in section 4. Numerical examples validate the theoretical results.

\section{Preliminaries}\label{two}

Following standard notation, matrices and higher order tensors will be denoted by capital letters and by capital, caligraphical letters like $P,~ \ac$, respectively.
Moreover, $\ac\left(i,:,:\right)$,~$\ac\left(:,i,:\right)$ and $\ac\left(:, :, i\right)$ correspond to $i$th horizontal slice, lateral slice and frontal slice, respectively.
For simplicity, to the $i$th frontal slice will be denoted by $\ac^{(i)}$. The notation $i\in\left[n\right]$ will be used to denote the index range $i=1,\ldots,n$.

We begin this part with the face-wise product of two tensors.

\begin{definition}{\rm \cite{KK}}
The {\bf face-wise product} $\cc\triangle\dc$ of $\cc \in \mathbb{C}^{n_1\times n_2\times n_3}$ and $\dc \in \mathbb{C}^{n_2 \times l\times n_3}$ is defined as
$$\left(\cc\triangle\dc\right)^{\left(i\right)}=\cc^{\left(i\right)}\dc^{\left(i\right)}, \ i\in\left[n_3\right],$$
where $\left(\cc\triangle\dc\right)^{\left(i\right)}$, $\cc^{\left(i\right)}$, $\dc^{\left(i\right)}$ denote the $i$th frontal slice of $\cc\triangle\dc$, $\cc$ and $\dc$, respectively.
\end{definition}

%\begin{definition}{\rm \cite{KKA}}
%Let $L : \CC^{1\times \eta_2\times \eta_3}\rightarrow\CC^{1\times \eta_2\times \eta_3}$ is an invertible linear transform. Define $vec(L(\overline{\mathbf{a}}))=M{\mathbf{a}}$, where  $\overline{\mathbf{a}}$
%is a tube of tensor $\gc$, $M$ is the corresponding matrix of transform $L$.
%\end{definition}

%${\star \above 0pt M}$
%$_{M}^{\star}$

The next definition corresponds to the $k$-mode product of a tensor with a matrix.

\begin{definition}{\rm \cite{KB}}
The {\bf $k$-mode product}  of a tensor $\cc \in \mathbb{C}^{n_1\times n_2\times\cdots\times n_p}$ with a matrix $N\in\mathbb{C}^{J\times n_k}$ is denoted by $\cc\times_kN$
and expresses as
\begin{equation*}
  \left(\cc\times_kN\right)_{i_1 i_2\ldots i_{k-1}ji_{k+1}\ldots i_p}=\sum\limits_{i_k=1}^{n_k}\mathcal{{C}}_{i_1i_2\ldots i_p}N_{ji_k}, \ j\in\left[J\right].
\end{equation*}
\end{definition}
Denote
$L\left(\cc\right)=\cc\times_{3}{M},~L^{-1}\left(\cc\right)=\cc\times_{3}{M}^{-1},
$
where ${M}\in\mathbb{C}^{n_3\times n_3}$ is a nonsingular matrix. Then, one has the following product.

\begin{definition}{\rm \cite{KK, K3}}
Let ${M}\in\mathbb{C}^{n_3\times n_3}$ be a nonsingular matrix. The {\bf $\star_{M}$ product} between $\cc \in \mathbb{C}^{n_1\times n_2\times n_3}$ and $\dc \in \mathbb{C}^{n_2 \times l\times n_3}$ is defined by
\begin{equation}\label{XZ}
\cc\star_{M}\dc=L^{-1}\left[L\left(\cc\right)\triangle L\left(\dc\right)\right].
\end{equation}
\end{definition}
The product defined in \eqref{XZ} is known as the "M-product" between two third-order tensors. In fact, $\cc\times_{3}M$ is computed by using the matrix-matrix product
\begin{equation}
  \mathcal{Y}=\cc\times_3M\Leftrightarrow\mathcal{Y}_{\left(3\right)}
  =M\cc_{\left(3\right)},
\end{equation}
where $\cc_{\left(3\right)}\in\mathbb{C}^{n_3\times n_1n_2}$ denotes the mode-3 unfolding of $\cc$, which is got by the \textbf{squeeze} function proposed in \cite{KBHH}.
More precisely,
\begin{equation}\label{as}
 \cc_{(3)}=\left[\left(\squeeze\left(\overrightarrow{\cc}_1\right)\right)^T, \left(\squeeze\left(\overrightarrow{\cc}_2\right)\right)^T\ldots, \left(\squeeze\left(\overrightarrow{\cc}_{n_2}\right)\right)^T\right],
\end{equation}
where $\overrightarrow{\cc}_j$, $j\in\left\{1,\ldots,n_2\right\}$, are the lateral slices of $\cc$ and $\squeeze\left(\cdot\right)$ is defined by
\begin{equation*}
  C_j=\squeeze\left(\overrightarrow{\cc}_j\right)\Rightarrow\left(C_j\right)_{ik}=\left(\overrightarrow{\cc}_j\right)_{i1k}, \ i\in\left[n_1\right], \ j\in\left[n_2\right], \   k\in\left[n_3\right].
\end{equation*}
So,  it may be desirable to have  an alternative expression for (\ref{XZ}).
More precisely,
\begin{equation}\label{nb}
\cc\star_M\dc=L^{-1}\left[L\left(\cc\right)\triangle L\left(\dc\right)\right]=\left[L\left(\cc\right)\triangle L\left(\dc\right)\right]\times_3{M}^{-1}=
\left[\left(\cc\times_3{M}\right)\triangle\left(\dc\times_3{M}\right)\right]\times_3{M}^{-1}.
\end{equation}
Notice that one can get the T-product when choosing ${M}$ as the normalized DFT matrix in (\ref{nb}) (see  \cite{K1}). Denote
\begin{equation}\label{vvb}
\matt\left(\cc\right)=\diag\left(L\left(\cc\right)^{\left(1\right)},\ldots,L\left(\cc\right)^{\left(n_3\right)}\right)
=\diag\left(\widehat{\cc}^{\left(1\right)},\ldots,\widehat{\cc}^{\left(n_3\right)}\right),
\end{equation}
and $\matt^{-1}\left(\cdot\right)$ is the inverse operation of $\matt\left(\cdot\right)$, that is
                       \begin{equation}\label{mmm}
                         \cc=\matt^{-1}\left[\matt\left(\cc\right)\right].
\end{equation}
 Then, one can have the alternative definition of the M-product.
\begin{definition}{\rm \cite{SPBS}}
Let ${M}\in\mathbb{C}^{n_3\times n_3}$ be a nonsingular matrix.  The $\star_M$ product over arguments $\cc \in \mathbb{C}^{n_1\times n_2\times n_3}$ and $\dc \in \mathbb{C}^{n_2 \times l\times n_3}$ is defined by
\begin{equation}\label{zzz}
\cc\star_M\dc=\matt^{-1}\left[\matt\left(\cc\right)\cdot\matt\left(\dc\right)\right].
\end{equation}
\end{definition}

In the following, we investigate elementary operations based on the M-Product.

\begin{lemma}{\rm \cite{K3,KK}} % \label{ppa}
If $\ac, \gc, \hc$ are third-order tensors with proper sizes, then the subsequent statements are valid:
\begin{description}
  \item  [(a)] $\ac\star_M\left(\gc+\hc\right) = \ac\star_M\gc + \ac\star_M\hc$.
\item [(b)] $\left(\gc+\hc\right)\star_M\ac = \gc\star_M\ac + \hc\star_M \ac$.
\item [(c)] $\left(\ac\star_M\gc\right)\star_M\hc = \ac\star_M\left(\gc\star_M\hc\right)$.
\end{description}
\end{lemma}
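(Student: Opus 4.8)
The plan is to reduce all three identities to the corresponding properties of ordinary matrix multiplication by exploiting the algebraic definition of the $\star_M$ product given in \eqref{zzz}. The whole argument rests on two structural facts about the operator $\matt(\cdot)$ of \eqref{vvb}. First, $\matt$ is \emph{linear}: since $L(\cc)=\cc\times_3 M$ is linear in $\cc$ (the $k$-mode product with a fixed matrix is linear in its tensor argument) and the block-diagonal assembly in \eqref{vvb} is itself linear, we obtain $\matt(\gc+\hc)=\matt(\gc)+\matt(\hc)$ whenever the sum is defined, and likewise $\matt^{-1}$ is linear by \eqref{mmm}. Second, $\matt$ is a \emph{bijection} with inverse $\matt^{-1}$, so $\matt\bigl[\matt^{-1}(X)\bigr]=X$ for any block-diagonal matrix $X$ of the appropriate size; I would record as the key cancellation law the consequence $\matt(\ac\star_M\gc)=\matt(\ac)\,\matt(\gc)$, which is immediate from \eqref{zzz}.

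With these facts in hand, parts (a) and (b) are direct. For (a) I would write
\begin{align*}
\ac\star_M(\gc+\hc)
&=\matt^{-1}\bigl[\matt(\ac)\,\matt(\gc+\hc)\bigr]
=\matt^{-1}\bigl[\matt(\ac)\bigl(\matt(\gc)+\matt(\hc)\bigr)\bigr]\\
&=\matt^{-1}\bigl[\matt(\ac)\,\matt(\gc)+\matt(\ac)\,\matt(\hc)\bigr]
=\ac\star_M\gc+\ac\star_M\hc,
\end{align*}
where the middle step uses left-distributivity of matrix multiplication and the final step uses linearity of $\matt^{-1}$ together with \eqref{zzz}. Part (b) is identical, invoking the right-distributive law $\bigl(\matt(\gc)+\matt(\hc)\bigr)\matt(\ac)=\matt(\gc)\matt(\ac)+\matt(\hc)\matt(\ac)$ instead.

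For the associativity in (c) I would apply the cancellation law to strip the inner products. Using $\matt(\ac\star_M\gc)=\matt(\ac)\,\matt(\gc)$ and $\matt(\gc\star_M\hc)=\matt(\gc)\,\matt(\hc)$, both sides collapse to a single triple matrix product:
\begin{align*}
(\ac\star_M\gc)\star_M\hc
&=\matt^{-1}\bigl[\bigl(\matt(\ac)\,\matt(\gc)\bigr)\matt(\hc)\bigr],\\
\ac\star_M(\gc\star_M\hc)
&=\matt^{-1}\bigl[\matt(\ac)\bigl(\matt(\gc)\,\matt(\hc)\bigr)\bigr],
\end{align*}
and these agree by associativity of matrix multiplication, after which $\matt^{-1}$ is applied to the common value.

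I do not expect a genuine obstacle; the content of the lemma is really just that $\matt$ is a linear isomorphism intertwining $\star_M$ with block-diagonal matrix multiplication. The one point that deserves care — and which I would verify explicitly rather than assert — is that the block-diagonal structure of \eqref{vvb} is compatible with multiplication, i.e. that the product of two such matrices is again block-diagonal and multiplies slicewise (giving $\widehat{\ac}^{(i)}\widehat{\gc}^{(i)}$ in the $i$th block); this is where the hypothesis of ``proper sizes'' enters, guaranteeing that every frontal-slice product (and the triple products arising in (c)) is dimensionally defined. Alternatively, one could bypass $\matt$ and argue slicewise through the face-wise formulation \eqref{nb}, using that $L$ and $L^{-1}$ are linear and that $\triangle$ acts as ordinary matrix multiplication on each frontal slice; but the $\matt$ route is cleaner, since it reduces associativity to a one-line appeal to the associativity of matrix multiplication.
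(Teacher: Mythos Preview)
Your argument is correct and cleanly organized: the reduction via the linear bijection $\matt(\cdot)$ of \eqref{vvb}--\eqref{zzz} transports $\star_M$ to block-diagonal matrix multiplication, after which distributivity and associativity are inherited from ordinary matrices. The slicewise variant you mention through \eqref{nb} would work equally well.

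There is, however, nothing to compare against: the paper does not supply a proof of this lemma. It is stated with a citation to \cite{K3,KK} and used without argument, as a standard background fact about the M-product. So your write-up is not an alternative to the paper's proof but rather a self-contained justification of a result the paper imports from the literature.
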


\begin{definition} {\rm \cite{KK}}
The  {\bf identity tensor} $\ic \in \mathbb{C}^{n_1\times n_1\times n_3}$ is determined by $L\left(\ic\right)^{\left(i\right)}={I}_n$, $i\in \left[n_3\right]$.
\end{definition}

According to the definition of $L\left(\ic\right)^{\left(i\right)}$, it can be concluded
$$L\left(\ic\right)\triangle L\left(\ac\right)=L\left(\ac\right)\triangle L\left(\ic\right)=L\left(\ac\right),$$
which implies
$$\ac\star_M\ic=\ic\star_M\ac=\ac.$$

\begin{definition}{\rm \cite{KK}}
The tensor $\ac \in \mathbb{C}^{n_1 \times n_1\times n_3}$ is invertible if there exists $\xc \in \mathbb{C}^{n_1 \times n_1\times n_3}$ such that
$$\ac\star_M\xc=\xc\star_M\ac=\ic.$$
In such conditions, $\xc$ represents the inverse $\ac^{-1}$ of $\ac$, and $\xc$ is unique.
\end{definition}

In the following, we give the definition of the conjugate transpose of a tensor $\ac$.

\begin{definition}{\rm \cite{KK}}
The {\bf conjugate-transpose} of $\ac \in \mathbb{C}^{n_1 \times n_2 \times n_3}$, marked with $\ac^*$, is the $n_2 \times n_1 \times n_3$ tensor defined by $L\left(\ac^*\right)^{\left(i\right)}=\left[L\left(\ac\right)^{\left(i\right)}\right]^*$, $i\in\left[n_3\right]$.
\end{definition}

\begin{lemma} {\rm \cite{K3,KK}}\label{ppaa}
Arbitrary tensors $\cc \in \mathbb{C}^{n_1\times n_2\times n_3}$ and $\dc \in \mathbb{C}^{n_2 \times l \times n_3}$ satisfy $\left(\cc\star_M \dc\right)^*=\dc^*\star_M\cc^*$.
\end{lemma}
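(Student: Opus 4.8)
The plan is to transport the identity into the ``transform domain'' via the operator $\matt$, where the $\star_M$ product becomes ordinary (block) matrix multiplication and the tensor conjugate transpose becomes the ordinary matrix adjoint; in that domain the assertion collapses to the familiar reverse-order law $(AB)^*=B^*A^*$ for matrices, and injectivity of $\matt$ transports the conclusion back.

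First I would record the bridging identity $\matt(\ac^*)=[\matt(\ac)]^*$. By \eqref{vvb}, $\matt(\ac)$ is block diagonal with diagonal blocks $\widehat{\ac}^{(i)}=L(\ac)^{(i)}$, and the adjoint of a block diagonal matrix is the block diagonal matrix of the adjoints of its blocks. By the definition of the conjugate transpose, $L(\ac^*)^{(i)}=[L(\ac)^{(i)}]^*$, so these adjointed blocks are precisely $\widehat{(\ac^*)}^{(i)}$; hence $[\matt(\ac)]^*=\matt(\ac^*)$. Next, applying $\matt$ to the defining relation \eqref{zzz} and using \eqref{mmm} gives $\matt(\cc\star_M\dc)=\matt(\cc)\,\matt(\dc)$. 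Combining these two facts with the matrix reverse-order law yields
$$\matt\big((\cc\star_M\dc)^*\big)=[\matt(\cc\star_M\dc)]^*=[\matt(\cc)\,\matt(\dc)]^*=\matt(\dc)^*\,\matt(\cc)^*=\matt(\dc^*)\,\matt(\cc^*)=\matt(\dc^*\star_M\cc^*).$$
Since $\matt$ is invertible (its inverse $\matt^{-1}$ appears in \eqref{mmm}), and in particular injective, equality of the images forces $(\cc\star_M\dc)^*=\dc^*\star_M\cc^*$.

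The only genuine content lies in the first step; everything else is either a definition or a standard matrix identity. The main point to verify carefully is that the block structure of $\matt$ is preserved under the adjoint, i.e. the diagonal blocks do not mix, which is immediate from the block diagonal form in \eqref{vvb}. An entirely parallel and perhaps more transparent route avoids $\matt$ altogether: working slice by slice, the face-wise product underlying \eqref{XZ} gives $L(\cc\star_M\dc)^{(i)}=L(\cc)^{(i)}L(\dc)^{(i)}$, whence $L\big((\cc\star_M\dc)^*\big)^{(i)}=[L(\cc)^{(i)}L(\dc)^{(i)}]^*=L(\dc^*)^{(i)}L(\cc^*)^{(i)}=L(\dc^*\star_M\cc^*)^{(i)}$ for every $i\in[n_3]$, and invertibility of $L$ then delivers the claim.
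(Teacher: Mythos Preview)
Your argument is correct. Note, however, that the paper does not supply its own proof of this lemma: it is stated with a citation to \cite{K3,KK} and left unproved. So there is no ``paper's proof'' to compare against; your proposal fills in what the paper takes for granted. Both routes you give---via $\matt$ and via slice-wise computation with $L$---are valid and are exactly the kind of reduction-to-matrices argument one would expect from the definitions in Section~\ref{two}.
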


\begin{definition} %\label{d511}
For $\ac \in \CC^{n_1\times n_1\times n_3}$, suppose
$$ %\label{Fa}
\matt\left(\ac\right)=\diag\left(L\left(\ac\right)^{\left(1\right)},\ldots,L\left(\ac\right)^{\left(n_3\right)}\right).
$$
Then, the index of $\ac$, which is denoted by $\ind\left(\ac\right)$, is
$\ind\left(\ac\right)=\max\limits_{i\in\left[n_3\right]}\left\{\ind\left(L\left(\ac\right)^{\left(i\right)}\right)\right\}$.
\end{definition}

%\begin{definition} %\cite{MSL}\label{dh10}
%%\textbf{Frobenius norm} of $\gc$ is
%\begin{equation*} % \label{se}
%\|\gc\|^2_F = \gc^H\,\star_{\!\!\! M} \, \gc = \sum_{i_1=1}^{\eta_1}\sum_{i_2=1}^{\eta_2}\sum_{i_3=1}^{\eta_3}\gc^2(i_1, i_2, i_3).
%\end{equation*}
%\end{definition}

%\begin{definition} %\cite{MSL}\label{dh10}
%%\textbf{Frobenius norm} of $\ac$ is
%\begin{equation*} % \label{se}
%\|\ac\|^2_F = \ac^H\star_M\ac = \sum_{i_1=1}^{n_1}\sum_{i_2=1}^{n_2}\sum_{i_3=1}^{n_3}\ac^2(i_1, i_2, i_3).
%\end{equation*}
%\end{definition}

\section{Main results}\label{three0}

In this section, we will study the $1$-$\Gamma$ inverse, where $\Gamma\in\left\{\dag, D, *\right\}$, that is, 1-MP inverse, 1-D inverse and 1-Star inverse.

\subsection{The 1-MP inverse of a tensor}

In the following, we will give the definition of the 1-MP inverse. Before that, we firstly give the definition of the Moore-Penrose inverse of a tensor $\ac$.

\begin{definition}\cite{JXWL}
Let $\ac \in \CC^{n_1\times n_2\times n_3}$. If there exists a tensor $\wc \in
\CC^{n_2\times n_1\times n_3}$ such that
\begin{equation}\label{mpkk0}
\textnormal{(I)} \ \ac\star_M\wc\star_M\ac = \ac, \quad \textnormal{(II)} \ \wc\star_M\ac\star_M\wc = \wc, \quad \textnormal{(III)} \ \left(\ac\star_M\wc\right)^* = \ac\star_M\wc, \quad \textnormal{(IV)} \ \left(\wc\star_M\ac\right)^* = \wc\star_M\ac,
\end{equation}
then $\wc$ is called the \textbf{Moore-Penrose inverse} of the tensor $\ac$ and is denoted by $\ac^\dag$, and $\wc$ is unique.
\end{definition}

For any $\ac \in \CC^{n_1\times n_2\times n_3}$, denote by $\ac\left\{i,j,k,l\right\}$, the set of all $\ac$  which satisfy equations (I), (II), (III), (IV) of (\ref{mpkk0}). In this case, $\wc$  is a $\left\{i,j,k,l\right\}$-inverse. Especially, one of the $\{1\}$-inverse of $\ac$ will be denoted by $\ac^-$.

\begin{definition}\label{d07}
Let $\ac \in\mathbb{C}^{n_1\times n_2\times n_3}$. For each $\ac^-\in
\ac\{1\}$, the tensor
\begin{equation}\label{mp012}
\ac^{-,\dag}=\ac^-\star_M\ac\star_M\ac^\dag
\end{equation}
is called the \textbf{1-MP inverse} of the tensor $\ac$ and is denoted by $\ac^{-,\dag}$.
The symbol $\ac\{-,\dag\}$ stands for the set of all 1-MP inverses of $\ac$.
\end{definition}
 Let the singular value decomposition of $\ac$ is
\begin{align}\label{333}
  \ac=\uc\star_M\matt^{-1}\left[\diag\left(\left[
                                \begin{array}{cc}
                                  D^{\left(1\right)} & O \\
                                  O & O \\
                                \end{array}
                              \right],\ldots,\left[
                                \begin{array}{cc}
                                  D^{\left(n_3\right)} & O \\
                                  O & O \\
                                \end{array}
                              \right]
  \right) \right]\star_M\vc^*.
\end{align}
Then, the general form of the $\{1\}$-inverses of $\ac$ is given by
\begin{align}\label{555}
  \ac^-=\vc\star_M\matt^{-1}\left[\diag\left(\left[
                                \begin{array}{cc}
                                  \left(D^{\left(1\right)}\right)^{-1} & W^{\left(1\right)}_{12} \\
                                  W^{\left(1\right)}_{21} & W^{\left(1\right)}_{22} \\
                                \end{array}
                              \right],\ldots,\left[
                                \begin{array}{cc}
                                  \left(D^{\left(n_3\right)}\right)^{-1} & W^{\left(n_3\right)}_{12} \\
                                  W^{\left(n_3\right)}_{21} & W^{\left(n_3\right)}_{22} \\
                                \end{array}
                              \right]
 \right)\right]\star_M\uc^*,
\end{align}
where $W^{ \left(i\right)}_{12}$, $W^{\left(i\right)}_{21}$, $W^{\left(i\right)}_{22}$, $i\in\left[n_3\right]$ are arbitrary. In addition, one can have
\begin{eqnarray}\label{0xxc}
  \ac\left\{-,\dag\right\}&=&\left\{\ac^-\star_M\ac\star_M\ac^{\dag}|\ac^-\in\ac\{1\}\right\}\nonumber\\
  &=&\left\{\vc\star_M\matt^{-1}\left[\diag\left(\left[
                                \begin{array}{cc}
                                  \left(D^{\left(1\right)}\right)^{-1} & O \\
                                  W^{\left(1\right)}_{21} & O \\
                                \end{array}
                              \right],\ldots,\left[
                                \begin{array}{cc}
                                  \left(D^{\left(n_3\right)}\right)^{-1} & O \\
                                  W^{\left(n_3\right)}_{21} & O \\
                                \end{array}
                              \right]
 \right ) \right]\star_M\uc^*\right\}.
\end{eqnarray}

 Notice that the 1-MP inverses is not unique. We use $\mathcal{P}\{-,\dag\}=\left\{\ac^{-,\dag}|\ac \in\mathbb{C}^{n_1\times n_2\times n_3}\right\}$ to denote the set of the 1-MP inverses of a tensor.

An algorithm  for computing the 1-MP inverse based on (\ref{0xxc}) follows. Notice that $L\left(\alpha\right)=\alpha\times_3 M$, $\alpha=\ac, \bc,\cc\ldots$, will denoted by $\widehat{\alpha}$ from now on.

\begin{algorithm}[H]\label{a1}
\caption{Computing the 1-MP inverse under the M-product}	
\KwIn{$\ac\in\mathbb{C}^{n_1\times n_2\times n_3}$ and $M \in\mathbb{C}^{n_3\times n_3}$}
\KwOut {$\xc=\ac^{-,\dag}$}
\begin{enumerate}
\item Compute $\widehat{\ac}=\ac\times_3M$
%\item $a=rank(\widehat{\ac})$
\item \textbf{For} $i=1:n_3$ \textbf{do}
\item $\widehat{\ac}^{\left(i\right)}=\widehat{\uc}^{\left(i\right)}\cdot\widehat{\mathcal {S}}^{\left(i\right)}\cdot\left(\widehat{\mathcal {V}}^{\left(i\right)}\right)^*$, that is, the singular value decompositions of $\widehat{\ac}^{\left(i\right)}$
\item $\diag\left(\widehat{D}^{\left(i\right)},O\right)=\widehat{\mathcal {S}}^{\left(i\right)}$,
\item $\widehat{\ac^-}^{\left(i\right)}=\widehat{\vc}^{\left(i\right)}
\begin{bmatrix}
\left(\widehat{D^{\left(i\right)}}\right)^{-1}            &  \widehat{W_{12}}^{\left(i\right)}\\
\widehat{W_{21}}^{\left(i\right)}              &  \widehat{W_{22}}^{\left(i\right)}
\end{bmatrix}\left(\widehat{\uc}^{\left(i\right)}\right)^*,
$ where $\widehat{W_{12}}^{\left(i\right)}$, $\widehat{W_{21}}^{\left(i\right)}$, $\widehat{W_{22}}^{\left(i\right)}$ are arbitrary
\item $\left(\widehat{\ac^{-,\dag}}\right)^{\left(i\right)}=\widehat{\vc}^{\left(i\right)}
\begin{bmatrix}
\left(\widehat{D^{\left(i\right)}}\right)^{-1}         &  O\\
\widehat{W_{21}}^{\left(i\right)}           &  O
\end{bmatrix}\left(\widehat{\uc}^{\left(i\right)}\right)^*
$
\item $\zc^{\left(i\right)}=\left(\widehat{\ac^{-,\dag}}\right)^{\left(i\right)}$
\item \textbf{End of}
\item Compute $\xc=\zc\times_3M^{-1}$
\item \textbf{Return} $\ac^{-,\dag}=\xc$.
\end{enumerate}
\end{algorithm}

\begin{example}
Let $\ac\in\mathbb{C}^{3\times2\times3}$ and $M\in\mathbb{C}^{3\times3}$  with entries
\footnotesize{$$\mathcal P^{\left(1\right)}=
\begin{bmatrix}
1 & -2\\
0 & -6\\
0 & 0
\end{bmatrix},~
\mathcal P^{\left(2\right)}=
\begin{bmatrix}
1 & 2 \\
2 & 4 \\
0 & 0
\end{bmatrix},~
\mathcal P^{\left(3\right)}=
\begin{bmatrix}
0 & 3 \\
0 & 6 \\
0 & 0
\end{bmatrix},~
M=
\begin{bmatrix}
1 & 0 & 1\\
0 & 1 & 0 \\
0 & 1 & 1
\end{bmatrix}.
$$
}\normalsize{By Algorithm \ref{a1}, we firstly compute $\widehat{\ac^{-}}^{\left(1\right)},\ \widehat{\ac^{-}}^{\left(2\right)},\  \widehat{\ac^{-}}^{\left(3\right)}$ as}
\footnotesize{\begin{eqnarray*}
\widehat{\ac^{-}}^{\left(1\right)}&=&
\begin{bmatrix}
0.7071 & -0.7071 \\
0.7071 & 0.7071
\end{bmatrix}
\begin{bmatrix}
0.7071 & a_{12} & a_{13}\\
a_{21} & a_{22} & a_{23}
\end{bmatrix}
\begin{bmatrix}
1 & 0 & 0 \\
0 & 1 & 0 \\
0 & 0 & 1
\end{bmatrix}\\
&=&\begin{bmatrix}
0.5-0.7071a_{21} & 0.7071a_{12}-0.7071a_{22} & 0.7071a_{13}-0.7071a_{23}\\
0.5+0.7071a_{21} & 0.7071a_{12}+0.7071a_{22} & 0.7071a_{13}+0.7071a_{23}
\end{bmatrix},
\end{eqnarray*}
\begin{eqnarray*}
\widehat{\ac^{-}}^{\left(2\right)}&=&
\begin{bmatrix}
-0.4472 & -0.8944 \\
-0.8944 & 0.4472
\end{bmatrix}
\begin{bmatrix}
0.2    & b_{12} & b_{13}\\
b_{21} & b_{22} & b_{23}
\end{bmatrix}
\begin{bmatrix}
-0.4472 & -0.8944 & 0 \\
-0.8944 & 0.4472  & 0 \\
0       & 0       & 1
\end{bmatrix}\\
&=&
\begin{bmatrix}
0.04+0.4b_{21}+0.4b_{12}+0.8b_{22}   & 0.08+0.8b_{21}-0.2b_{12}-0.4b_{22} &-0.4472b_{13}-0.8944b_{23}\\
0.08-0.2b_{21}+0.8b_{12}-0.4b_{22} & 0.16-0.4b_{21}-0.4b_{12}+0.2b_{22} & -0.8944b_{12}+0.4472b_{23}
\end{bmatrix},
\end{eqnarray*}
\begin{eqnarray*}
\widehat{\ac^{-}}^{\left(3\right)}&=&
\begin{bmatrix}
-0.1961 & -0.9806 \\
-0.8944 & 0.1961
\end{bmatrix}
\begin{bmatrix}
0.0877    & c_{12} & c_{13}\\
c_{21}    & c_{22} & c_{23}
\end{bmatrix}
\begin{bmatrix}
-0.4472 & -0.8944 & 0 \\
-0.8944 & 0.4472  & 0 \\
0       & 0       & 1
\end{bmatrix}\\&=&
 \begin{bmatrix}
 0.0077+0.4385c_{21}+0.1754c_{12}+0.877c_{22}  & 0.0154+0.877c_{21}-0.877c_{12}-0.4385c_{22}  &-0.1961c_{13}-0.9806c_{23}\\
0.0351-0.0877c_{21}+0.8c_{12}-0.1754c_{22} & 0.0701-0.1754c_{21}-0.4c_{12}+0.0877c_{22} & -0.8944c_{13}+0.1961c_{23}
\end{bmatrix}.
\end{eqnarray*}}
\normalsize{Then, the frontal slices of $\ac^{-,\dag}$ are}
\footnotesize{$$
\left(\ac^{-,\dag}\right)^{\left(1\right)}=
\begin{bmatrix}
0.4b_{21}-0.7071a_{21}-0.4385c_{21}+0.5323     &  0.8b_{21}-0.8771c_{21}+0.0646  & 0\\
0.7071a_{21}-0.2b_{21}+0.0877c_{21}+0.5415  &     0.1754c_{21}-0.4b_{21}+0.0831         & 0
\end{bmatrix},
$$
$$
\left(\ac^{-,\dag}\right)^{\left(2\right)}=
\begin{bmatrix}
0.4b_{21}+0.04    & 0.8b_{21}+0.08 & 0\\
0.08-0.2b_{21} & 0.16-0.4b_{21} & 0
\end{bmatrix},$$
$$\left(\ac^{-,\dag}\right)^{\left(3\right)}=
\begin{bmatrix}
0.4385c_{21}-0.4b_{21}-0.0323      & 0.8771c_{21}-0.8b_{21}-0.0646 & 0\\
0.2b_{21}-0.0877c_{21}-0.0415    & 0.4b_{21}-0.1754c_{21}-0.0831           & 0
\end{bmatrix}.
$$
}\end{example}

In the following, we will obtain the characteristics of the 1-MP inverse of a tensor from several angles.
\begin{lemma}\label{111}
Let $\ac\in\mathbb{C}^{n_1\times n_2\times n_3}$ and $\ac^-\in
\ac\left\{1\right\}$. Then,
\begin{description}
  \item  [(a)] $\ac^{-,\dag}\in\ac\left\{1,2,3\right\}$.
  \item  [(b)] $\ac^{-,\dag}\star_M\ac=\ac^{-}\star_M\ac$ and  $\ac\star_M\ac^{-,\dag}=\ac\star_M\ac^{\dag}$.
\end{description}
\begin{proof} {\rm\bf(a)} Since $\ac^{-},~\ac^\dag \in \ac\left\{1\right\}$, it follows that $\ac^{-,\dag}\in\ac\left\{1,2\right\}$. On the other hand,
$$\ac\star_M\ac^{-,\dag}=\ac\star_M\ac^{-}\star_M\ac\star_M\ac^{\dag}=\ac\star_M\ac^{\dag},$$
we have $\ac^{-,\dag}\in \ac\left\{3\right\}$. So, $\ac^{-,\dag}\in \ac\left\{1,2,3\right\}$.

{\rm\bf(b)}
$\ac^{-,\dag}\star_M\ac=\ac^{-}\star_M\ac\star_M\ac^\dag\star_M\ac=\ac^-\star_M\ac$. The other equality was proved in the previous item.
\end{proof}
\end{lemma}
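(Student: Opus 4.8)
The plan is to prove part (a) by checking the three conditions (I), (II), (III) of (\ref{mpkk0}) directly for the candidate $\wc=\ac^{-,\dag}=\ac^-\star_M\ac\star_M\ac^\dag$, using only the associativity of $\star_M$, the defining equation $\ac\star_M\ac^-\star_M\ac=\ac$ of a $\{1\}$-inverse, and conditions (I), (III) of (\ref{mpkk0}) satisfied by $\ac^\dag$. The driving observation is that \emph{both} $\ac^-$ and $\ac^\dag$ lie in $\ac\{1\}$, so $\ac^{-,\dag}$ is a product of $\ac$ sandwiched between two $\{1\}$-inverses; such a sandwich is automatically a $\{1,2\}$-inverse. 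Concretely, for condition (I) I would regroup $\ac\star_M\wc\star_M\ac=(\ac\star_M\ac^-\star_M\ac)\star_M\ac^\dag\star_M\ac$ and collapse the first block to $\ac$, leaving $\ac\star_M\ac^\dag\star_M\ac=\ac$.

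Condition (II) is the step that requires the most bookkeeping: in $\wc\star_M\ac\star_M\wc$ one must collapse the central block $\ac\star_M\ac^\dag\star_M\ac$ to $\ac$ first, and then collapse the remaining $\ac\star_M\ac^-\star_M\ac$ to $\ac$, recovering $\wc$ after these two reductions. For condition (III) I would first establish the simplification $\ac\star_M\wc=\ac\star_M\ac^-\star_M\ac\star_M\ac^\dag=\ac\star_M\ac^\dag$, and then note that the right-hand side is Hermitian by condition (III) of (\ref{mpkk0}) for $\ac^\dag$; equality of the two tensors transfers Hermitianness back to $\ac\star_M\wc$, giving $\ac^{-,\dag}\in\ac\{3\}$. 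Together with the $\{1,2\}$ membership this yields $\ac^{-,\dag}\in\ac\{1,2,3\}$.

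Part (b) then falls out of the same collapsing technique and reuses the work above. For the first identity, $\ac^{-,\dag}\star_M\ac=\ac^-\star_M(\ac\star_M\ac^\dag\star_M\ac)=\ac^-\star_M\ac$ by condition (I) for $\ac^\dag$. The second identity $\ac\star_M\ac^{-,\dag}=\ac\star_M\ac^\dag$ is precisely the simplification already derived while checking condition (III), so no further computation is needed. I do not anticipate any genuine obstacle; the only point demanding care is maintaining correct associative bracketing in (II), where the $\{1\}$-inverse identity must be applied twice.
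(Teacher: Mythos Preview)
Your proposal is correct and follows essentially the same approach as the paper: both arguments observe that $\ac^-,\ac^\dag\in\ac\{1\}$ forces $\ac^{-,\dag}\in\ac\{1,2\}$, then establish $\ac\star_M\ac^{-,\dag}=\ac\star_M\ac^\dag$ to obtain condition (III), and derive part (b) by the same one-line collapses. The only difference is that you spell out the verification of conditions (I) and (II) explicitly, whereas the paper invokes the general fact about sandwiching $\ac$ between two $\{1\}$-inverses in a single sentence.
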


\begin{lemma}\label{112}
Let $\ac\in\mathbb{C}^{n_1\times n_2\times n_3}$ and $\mathcal {G}$ be a fixed $\{1\}$-inverse of $\ac$. Then, the class of all $\left\{1\right\}$-inverses of $\ac$ is given by
$$\ac\left\{1\right\}=\left\{\mathcal {G}+\mathcal {U}-\mathcal {G}\star_M\ac\star_M\mathcal {U}\star_M\ac\star_M\mathcal {G},~\text{where}~\mathcal{U}~\text{is arbitrary}\right\}.$$
\begin{proof}
First of all, notice that for all $\mathcal {U}$,
\begin{align*}
 \ac\star_M\left(\mathcal {G}+\mathcal {U}-\mathcal {G}\star_M\ac\star_M\mathcal {U}\star_M\ac\star_M\mathcal {G}\right)\star_M\ac &=\ac\star_M\mathcal {G}\star_M\ac+\ac\star_M\mathcal {U}\star_M\ac-\ac\star_M\mathcal {G}\star_M\ac\star_M\mathcal {U}\star_M\ac\star_M\mathcal {G}\star_M\ac\\
 &=\ac\star_M\mathcal {G}\star_M\ac\\
 &=\ac.
\end{align*}
So, $\mathcal {G}+\mathcal {U}-\mathcal {G}\star_M\ac\star_M\mathcal {U}\star_M\ac\star_M\mathcal {G}$ is a $\left\{1\right\}$-inverse of $\ac$ for all $\mathcal {U}$.

Suppose $\mathcal {G}_1$ is another $\left\{1\right\}$-inverse of $\ac$. Set $\mathcal {U}=\mathcal {G}_1-\mathcal {G}$ and check that $\mathcal {G}_1=\mathcal {G}+\mathcal {U}-\mathcal {G}\star_M\ac\star_M\mathcal {U}\star_M\ac\star_M\mathcal {G}$. Thus, the class of all $\left\{1\right\}$-inverse of $\ac$ is given by
$$\ac\left\{1\right\}=\left\{\mathcal {G}+\mathcal {U}-\mathcal {G}\star_M\ac\star_M\mathcal {U}\star_M\ac\star_M\mathcal {G},~\text{where}~\mathcal {U}~\text{is arbitrary}\right\}.$$
\end{proof}
\end{lemma}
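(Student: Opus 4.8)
The plan is to prove the asserted set equality by establishing two inclusions. Write $\mathcal{S}=\{\,\mathcal{G}+\mathcal{U}-\mathcal{G}\star_M\ac\star_M\mathcal{U}\star_M\ac\star_M\mathcal{G}:\mathcal{U}\text{ is arbitrary}\,\}$ for the candidate set; I must show $\mathcal{S}\subseteq\ac\{1\}$ and $\ac\{1\}\subseteq\mathcal{S}$. Throughout, the only two ingredients needed are the distributivity and associativity of the $\star_M$ product together with the defining relation of a $\{1\}$-inverse, namely $\ac\star_M\mathcal{G}\star_M\ac=\ac$.

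For the inclusion $\mathcal{S}\subseteq\ac\{1\}$ I would fix an arbitrary $\mathcal{U}$, set $\mathcal{X}=\mathcal{G}+\mathcal{U}-\mathcal{G}\star_M\ac\star_M\mathcal{U}\star_M\ac\star_M\mathcal{G}$, and compute $\ac\star_M\mathcal{X}\star_M\ac$ by distributing. This yields the three terms $\ac\star_M\mathcal{G}\star_M\ac$, $\ac\star_M\mathcal{U}\star_M\ac$, and $\ac\star_M\mathcal{G}\star_M\ac\star_M\mathcal{U}\star_M\ac\star_M\mathcal{G}\star_M\ac$. Applying $\ac\star_M\mathcal{G}\star_M\ac=\ac$ turns the first term into $\ac$, while applying it twice inside the third term collapses that term to $\ac\star_M\mathcal{U}\star_M\ac$, which cancels the middle term exactly. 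Hence $\ac\star_M\mathcal{X}\star_M\ac=\ac$, so $\mathcal{X}\in\ac\{1\}$.

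For the reverse inclusion I would take an arbitrary $\mathcal{G}_1\in\ac\{1\}$ and make the substitution $\mathcal{U}=\mathcal{G}_1-\mathcal{G}$, which is the one nonobvious step. Expanding the resulting expression, the correction term $\mathcal{G}\star_M\ac\star_M\mathcal{U}\star_M\ac\star_M\mathcal{G}$ splits as $\mathcal{G}\star_M(\ac\star_M\mathcal{G}_1\star_M\ac)\star_M\mathcal{G}-\mathcal{G}\star_M(\ac\star_M\mathcal{G}\star_M\ac)\star_M\mathcal{G}$; since $\ac\star_M\mathcal{G}_1\star_M\ac=\ac$ and $\ac\star_M\mathcal{G}\star_M\ac=\ac$, both bracketed factors reduce to $\ac$, the two pieces become equal, and the whole correction term vanishes. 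What remains is $\mathcal{G}+(\mathcal{G}_1-\mathcal{G})=\mathcal{G}_1$, so $\mathcal{G}_1\in\mathcal{S}$.

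I expect no genuine obstacle here: the argument is pure bookkeeping driven by the single identity $\ac\star_M\mathcal{G}\star_M\ac=\ac$ and associativity, exactly as in the matrix case, since both properties transfer verbatim to the $\star_M$ product. The one point meriting care is that this parametrization is not injective --- distinct choices of $\mathcal{U}$ can produce the same $\{1\}$-inverse --- so the result must be read as an equality of sets proved by two inclusions, not as a bijective enumeration. A secondary care point is to keep the regrouping of the long products explicit, because $\star_M$ is noncommutative and only associativity, not commutativity, may be invoked.
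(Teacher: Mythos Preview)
Your proposal is correct and follows essentially the same approach as the paper: both prove the two inclusions by distributing $\ac\star_M(\cdot)\star_M\ac$ for the forward direction and by substituting $\mathcal{U}=\mathcal{G}_1-\mathcal{G}$ for the reverse. The only difference is that you spell out the cancellation of the correction term in the reverse inclusion, which the paper leaves as a one-line ``check that'' remark.
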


By Lemma \ref{111}, we have that $\ac \subseteq \ac\left\{-,\dag\right\}$ holds. In addition by lemma \ref{111} every tensor $\xc \in \ac\left\{-,\dag\right\}$ satisfies the two equations of the system given by $\xc\star_M\ac\star_M\xc=\xc$, $\ac\star_M\xc=\ac\star_M\ac^\dag$.

\begin{theorem} \label{113}
Let $\ac\in\mathbb{C}^{n_1\times n_2\times n_3}$. Then, the following statements are equivalent.
\begin{description}
  \item  [(a)] $\wc\in\ac\left\{-,\dag\right\}$.
  \item  [(b)] $\wc$ is a solution of the system: $\xc\star_M\ac\star_M\xc=\xc$, $\ac\star_M\xc=\ac\star_M\ac^\dag$.
   \item  [(c)] $\wc\in\ac\left\{1,2,3\right\}$.
\end{description}
\begin{proof}
$\textbf{(a)}\Longrightarrow \textbf{(b)}$: Suppose  that $\wc\in\ac\left\{-,\dag\right\}$, then $\wc=\ac^{-}\star_M\ac\star_M\ac^\dag$ for some $\ac^-\in\ac\left\{1\right\}$. Thus, we have
\begin{align*}
 \wc\star_M\ac\star_M\wc&=\ac^-\star_M\ac\star_M\ac^\dag\star_M\ac\star_M\ac^-\star_M\ac\star_M\ac^\dag\\
 &=\ac^-\star_M\ac\star_M\ac^\dag\star_M\ac\star_M\ac^\dag\\
 &=\ac^-\star_M\ac\star_M\ac^\dag=\wc
\end{align*}
and
\begin{align*}
\ac\star_M\wc=\ac\star_M\ac^-\star_M\ac\star_M\ac^\dag=\ac\star_M\ac^\dag.
\end{align*}
$\textbf{(b)}\Longrightarrow \textbf{(a)}$: Assume that $\wc$ satisfies $\xc\star_M\ac\star_M\xc=\xc$ and $\ac\star_M\xc=\ac\star_M\ac^\dag$ and let $\ac$ be written in (\ref{333}),
\begin{align*}
  \ac=\uc\star_M\matt^{-1}\left[\diag\left(\left[
                                \begin{array}{cc}
                                  D^{\left(1\right)} & O \\
                                  O & O \\
                                \end{array}
                              \right],\ldots,\left[
                                \begin{array}{cc}
                                  D^{\left(n_3\right)} & O \\
                                  O & O \\
                                \end{array}
                              \right]
  \right)\right]\star_M\vc^*.
\end{align*}
Let
\begin{align*}
  \wc=\vc\star_M\matt^{-1}\left[\diag\left(\left[
                                \begin{array}{cc}
                                  W^{\left(1\right)}_{11} & W^{\left(1\right)}_{12} \\
                                  W^{\left(1\right)}_{21} & W^{\left(1\right)}_{22} \\
                                \end{array}
                              \right],\ldots,\left[
                                \begin{array}{cc}
                                  W^{\left(n_3\right)}_{11} & W^{\left(n_3\right)}_{12} \\
                                  W^{\left(n_3\right)}_{21} & W^{\left(n_3\right)}_{22} \\
                                \end{array}
                              \right]
 \right )\right]\star_M\uc^*,
\end{align*}
be partitioned accordingly to the sizes of the blocks of $\ac$.
We get
\begin{align*}
  \ac\star_M\wc=\uc\star_M\matt^{-1}\left[\diag\left(\left[
                                \begin{array}{cc}
                                  D^{\left(1\right)}W^{\left(1\right)}_{11} & D^{\left(1\right)}W^{\left(1\right)}_{12} \\
                                  O                  & O \\
                                \end{array}
                              \right],\ldots,\left[
                                \begin{array}{cc}
                                  D^{\left(n_3\right)}W^{\left(n_3\right)}_{11} &  D^{\left(n_3\right)}W^{\left(n_3\right)}_{12} \\
                                       O                   & O      \\
                                \end{array}
                              \right]
  \right)\right]\star_M\uc^*,
\end{align*}
and
\begin{align*}
  \ac\star_M\ac^\dag=\uc\star_M\matt^{-1}\left[\diag\left(\left[
                                \begin{array}{cc}
                                  I^{\left(1\right)} & O \\
                                  O       & O \\
                                \end{array}
                              \right],\ldots,\left[
                                \begin{array}{cc}
                                  I^{\left(n_3\right)} & O \\
                                       O    & O      \\
                                \end{array}
                              \right]
  \right)\right]\star_M\uc^*
\end{align*}
From $\ac\star_M\wc=\ac\star_M\ac^\dag$, we have $\wc^{\left(i\right)}_{11}=(\dc^{\left(i\right)})^{-1}$ and $\wc^{\left(i\right)}_{12}=O$, $i=\left[n_3\right]$.
In addition,
\begin{align*}
  \wc=\wc\star_M\ac\star_M\wc=\vc\star_M\matt^{-1}\left[\diag\left(\left[
                                \begin{array}{cc}
                                  (D^{\left(1\right)})^{-1}   & O \\
                                 W^{\left(1\right)}_{21}      & O \\
                                \end{array}
                              \right],\ldots,\left[
                                \begin{array}{cc}
                                  (D^{\left(n_3\right)})^{-1} & O \\
                                      W^{\left(n_3\right)}_{21} & O      \\
                                \end{array}
                              \right]
  \right)\right]\star_M\uc^*,
\end{align*}
where $\wc^{\left(i\right)}_{21}$, $i \in \left[n_3\right]$ is arbitrary. Hence from (\ref{0xxc}), we get $\wc \in \ac\left\{-,\dag\right\}$.
\\$\textbf{(b)}\Longrightarrow \textbf{(c)}$: Suppose $\wc$ is a solution of the system: $\xc\star_M\ac\star_M\xc=\xc$, $\ac\star_M\xc=\ac\star_M\ac^\dag$, then $$\ac\star_M\wc\star_M\ac=\ac\star_M\ac^\dag\star_M\ac=\ac\
\text{and}
\ \left(\ac\star_M\wc\right)^*=\left(\ac\star_M\ac^\dag\right)^*=\ac\star_M\ac^\dag=\ac\star_M\wc,$$ Therefore, $\wc \in \ac\left\{1,2,3\right\}$.\\
$\textbf{(c)}\Longrightarrow \textbf{(b)}$: Suppose $\wc \in \ac\left\{1,2,3\right\}$. Then, $\wc$ satisfies $\wc\star_M\ac\star_M\wc=\wc$.
Let $\ac$ be written in the from (\ref{333}). From $\wc \in \ac\left\{1\right\}$, by (\ref{555}) we have
\begin{align*}
  \wc=\vc\star_M\matt^{-1}\left[\diag\left(\left[
                                \begin{array}{cc}
                                  (D^{\left(1\right)})^{-1} & W^{\left(1\right)}_{12} \\
                                  W^{\left(1\right)}_{21}  & W^{\left(1\right)}_{22} \\
                                \end{array}
                              \right],\ldots,\left[
                                \begin{array}{cc}
                                  (D^{\left(n_3\right)})^{-1} & W^{\left(n_3\right)}_{12} \\
                                  W^{\left(n_3\right)}_{21} & W^{\left(n_3\right)}_{22} \\
                                \end{array}
                              \right]
  \right)\right]\star_M\uc^*,
\end{align*}
where the partition form is based on the block size of $\ac$.
Form $\left(\ac\star_M\wc\right)^*=\ac\star_M\wc$, we get $\wc^{\left(i\right)}_{12}=O$, $i\in \left[n_3\right]$. So
\begin{align*}
  \ac\star_M\wc=\uc\star_M\matt^{-1}\left[\diag\left(\left[
                                \begin{array}{cc}
                                  I^{\left(1\right)} & O\\
                                  O       & O \\
                                \end{array}
                              \right],\ldots,\left[
                                \begin{array}{cc}
                                  I^{\left(n_3\right)} & O \\
                                 O          & O \\
                                \end{array}
                              \right]
  \right)\right]\star_M\uc^*=\ac\star_M\ac^\dag.
\end{align*}
Therefore, $\wc$ satisfies $\xc\star_M\ac\star_M\xc=\xc$, $\ac\star_M\xc=\ac\star_M\ac^\dag$.
\end{proof}
\end{theorem}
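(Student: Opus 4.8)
The plan is to prove the three statements equivalent through the short cycle $\textbf{(a)}\Rightarrow\textbf{(b)}\Rightarrow\textbf{(c)}\Rightarrow\textbf{(a)}$, so that only three implications are needed. Two of these are purely algebraic and rely on associativity of $\star_M$ together with the four defining equations of $\ac^\dag$ in (\ref{mpkk0}). For $\textbf{(a)}\Rightarrow\textbf{(b)}$ I would take $\wc=\ac^-\star_M\ac\star_M\ac^\dag$ with $\ac^-\in\ac\{1\}$; the identity $\ac\star_M\wc=\ac\star_M\ac^\dag$ is exactly Lemma \ref{111}(b), and inserting $\ac\star_M\ac^-\star_M\ac=\ac$ into $\wc\star_M\ac\star_M\wc$ and then applying $\ac^\dag\star_M\ac\star_M\ac^\dag=\ac^\dag$ collapses it to $\wc$. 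For $\textbf{(b)}\Rightarrow\textbf{(c)}$ I would post-multiply $\ac\star_M\wc=\ac\star_M\ac^\dag$ by $\ac$ to obtain equation (I) via $\ac\star_M\ac^\dag\star_M\ac=\ac$; equation (II) is the first hypothesis of (b) verbatim; and equation (III) follows because $(\ac\star_M\wc)^*=(\ac\star_M\ac^\dag)^*=\ac\star_M\ac^\dag=\ac\star_M\wc$ by the Hermitian symmetry of $\ac\star_M\ac^\dag$.

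The remaining implication $\textbf{(c)}\Rightarrow\textbf{(a)}$ is where the singular value decomposition enters. The key structural fact is that $\matt(\cdot)$ sends $\star_M$ to ordinary block-diagonal matrix multiplication (see (\ref{zzz})) and commutes with the conjugate transpose via the frontal-slice description of $\ac^*$; consequently every tensor identity may be checked slice-by-slice in the transform domain as a matrix identity among the blocks $\widehat{(\cdot)}^{(i)}$. Writing $\ac$ in the form (\ref{333}) and, since $\wc\in\ac\{1\}$, writing $\wc$ in the general form (\ref{555}) partitioned conformally with the $2\times 2$ singular-value blocks of $\ac$, I would impose the two extra Penrose conditions of $\{1,2,3\}$. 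The Hermitian condition (III) forces $\widehat{W_{12}}^{(i)}=O$ for every $i\in[n_3]$, and then the outer-inverse condition (II) forces $\widehat{W_{22}}^{(i)}=O$ as well; the surviving block structure is precisely the one displayed in (\ref{0xxc}), so $\wc\in\ac\{-,\dag\}$.

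The main obstacle is the bookkeeping in this last step: one must verify that the reduction to independent $2\times 2$ block computations on each frontal slice is legitimate, i.e. that applying $\matt$ (or equivalently $L$) turns each of the conditions in (c) into the corresponding matrix condition on $\widehat{\ac}^{(i)}$ and $\widehat{\wc}^{(i)}$, and that the transform-domain form of $\ac\star_M\ac^\dag$ is the expected $\diag(I,O)$-type block on each slice. Once the isomorphism between $\star_M$ and the block-diagonal matrix algebra is invoked, the argument becomes the classical matrix computation repeated for each $i$; the only care required is to keep the block partition of $\wc$ aligned with that of $\ac$ and to track that the Hermitian symmetry is imposed face-wise. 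No genuinely new difficulty arises beyond this slice-wise reduction.
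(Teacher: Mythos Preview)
Your proposal is correct. The two algebraic implications $\textbf{(a)}\Rightarrow\textbf{(b)}$ and $\textbf{(b)}\Rightarrow\textbf{(c)}$ are identical to the paper's, and your SVD computation for $\textbf{(c)}\Rightarrow\textbf{(a)}$ is valid: from (\ref{555}) the Hermitian condition on $\ac\star_M\wc$ kills $W_{12}^{(i)}$, and then $\wc\star_M\ac\star_M\wc=\wc$ forces $W_{22}^{(i)}=O$, landing $\wc$ in the set (\ref{0xxc}).

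The only difference from the paper is organizational. The paper does not close the cycle but instead proves four implications, $\textbf{(a)}\Leftrightarrow\textbf{(b)}$ and $\textbf{(b)}\Leftrightarrow\textbf{(c)}$, invoking the SVD twice: once for $\textbf{(b)}\Rightarrow\textbf{(a)}$ (where $\ac\star_M\wc=\ac\star_M\ac^\dag$ gives $W_{11}^{(i)}=(D^{(i)})^{-1}$, $W_{12}^{(i)}=O$, and then the outer-inverse condition removes $W_{22}^{(i)}$) and once for $\textbf{(c)}\Rightarrow\textbf{(b)}$ (where condition (III) gives $W_{12}^{(i)}=O$ and hence $\ac\star_M\wc=\ac\star_M\ac^\dag$). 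Your cycle $\textbf{(a)}\Rightarrow\textbf{(b)}\Rightarrow\textbf{(c)}\Rightarrow\textbf{(a)}$ fuses these two SVD passes into a single one and saves an implication; the paper's layout, on the other hand, makes the direct equivalence $\textbf{(a)}\Leftrightarrow\textbf{(b)}$ visible on its own, which it uses verbatim in the statement preceding the theorem. Either arrangement is fine; yours is marginally more economical.
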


\begin{theorem}
Let $\ac\in\mathbb{C}^{n_1\times n_2\times n_3}$ and  $\ac^{-,\dag}$ be a fixed 1-MP inverse. Then, the set of all 1-MP inverses of $\ac$ is given by
\begin{equation}\label{xxc}
  \ac\left\{-,\dag\right\}=\left\{\ac^{-,\dag}+\left(\ic-\ac^{-,\dag}\star_M\ac\right)\star_M\wc\star_M\ac\star_M\ac^{-,\dag}, \ \ \text{where}~\wc\in\mathbb{C}^{n_2\times n_1\times n_3}~\text{is arbitrary}\right\}.
\end{equation}
\begin{proof}
Let $\mathcal {S}:=\left\{\ac^{-,\dag}+\left(\ic-\ac^{-,\dag}\star_M\ac\right)\star_M\wc\star_M\ac\star_M\ac^{-,\dag}, \ \ \text{where}~ \wc\in\mathbb{C}^{n_2\times n_1\times n_3}~\text{is arbitrary}\right\}$, we will prove that $\ac\left\{-,\dag\right\}=\mathcal {S}$. Evidently,  $\ac^{-,\dag}+\left(\ic-\ac^{-,\dag}\star_M\ac\right)\star_M\wc\star_M\ac\star_M\ac^{-,\dag}\in\mathcal{A}\left\{1\right\}$.

By Lemma \ref{111}, $\ac^{-,\dag}\star_M\ac=\ac^{-}\star_M\ac$ and  $\ac\star_M\ac^{-,\dag}=\ac\star_M\ac^{\dag}$. Thus, we obtain
\begin{align*}
  \ac\star_M\left[\ac^{-,\dag}+\left(\ic-\ac^{-,\dag}\star_M\ac\right)\star_M\wc\star_M\ac\star_M\ac^{-,\dag}\right]
  &=\ac\star_M\ac^{-,\dag}+\ac\star_M\left(\ic-\ac^{-,\dag}\star_M\ac\right)\star_M\wc\star_M\ac\star_M\ac^{-,\dag}\\
  &=\ac\star_M\ac^{-,\dag}+\ac\star_M\left(\ic-\ac^{-}\star_M\ac\right)\star_M\wc\star_M\ac\star_M\ac^{-,\dag}\\
  &=\ac\star_M\ac^{-,\dag}=\ac\star_M\ac^\dag,
\end{align*}
which means $\ac^{-,\dag}+\left(\ic-\ac^{-,\dag}\star_M\ac\right)\star_M\wc\star_M\ac\star_M\ac^{-,\dag}\in\mathcal{A}\left\{3\right\}$.
Moreover,
\begin{align*}
 &\left[\ac^{-,\dag}+\left(\ic-\ac^{-,\dag}\star_M\ac\right)\star_M\wc\star_M\ac\star_M\ac^{-,\dag}\right]\star_M\ac\star_M\left[\ac^{-,\dag}+\left(\ic-\ac^{-,\dag}\star_M\ac\right)\star_M\wc\star_M\ac\star_M\ac^{-,\dag}\right]\\
 &=\left[\ac^{-,\dag}+\left(\ic-\ac^{-,\dag}\star_M\ac\right)\star_M\wc\star_M\ac\star_M\ac^{-,\dag}\right]\star_M\ac\star_M\ac^{-,\dag}\\
 &=\ac^{-,\dag}+\left(\ic-\ac^{-,\dag}\star_M\ac\right)\star_M\wc\star_M\ac\star_M\ac^{-,\dag}.
\end{align*}
Thus, $$\ac^{-,\dag}+\left(\ic-\ac^{-,\dag}\star_M\ac\right)\star_M\wc\star_M\ac\star_M\ac^{-,\dag}\in\mathcal{A}\left\{1,2,3\right\},$$ which implies $\mathcal {S}\subseteq \ac\{-,\dag\}$ by using Theorem \ref{113}.

On the contrary, if $\yc \in \ac\{-,\dag\}$, then there exists $\mathcal {D}\in \ac\left\{1\right\}$ such that $\yc=\mathcal {D}\star_M\mathcal {A}\star_M\mathcal {A}^\dag$. Moreover, let $\ac^{-,\dag}$ be a fixed 1-MP inverse and $\ac^{-,\dag}=\ac^-\star_M\ac\star_M\ac^\dag$, for some $\ac^- \in \ac\left\{1\right\}$. By Lemma \ref{112}, there exists $\wc\in\mathbb{C}^{n_2\times n_1\times n_3}$ such that $$\mathcal {D}=\ac^-+\wc-\ac^-\star_M\ac\star_M\wc\star_M\ac\star_M\ac^-.$$

Using Lemma \ref{111}, we have $\ac^{-,\dag}\star_M\ac=\ac^{-}\star_M\ac$ and  $\ac\star_M\ac^{-,\dag}=\ac\star_M\ac^{\dag}$. Thus, we obtain
\begin{align*}
\yc=\dc\star_M\ac\star_M\ac^\dag &=\left(\ac^-+\wc-\ac^-\star_M\ac\star_M\wc\star_M\ac\star_M\ac^-\right)\star_M\ac\star_M\ac^\dag\\
&=\ac^{-,\dag}+\left(\ic-\ac^{-,\dag}\star_M\ac\right)\star_M\wc\star_M\ac\star_M\ac^{-,\dag}.
\end{align*}
Hence, $\yc\in\mathcal {S}$, which implies $\ac\left\{-,\dag\right\}\subseteq\mathcal {S}$.
\end{proof}
\end{theorem}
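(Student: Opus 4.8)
The plan is to establish the identity \eqref{xxc} by proving the two inclusions separately, writing $\scc$ for the set on its right-hand side. Throughout, the two workhorses are the absorption identities of Lemma~\ref{111}(b), namely $\ac^{-,\dag}\star_M\ac=\ac^-\star_M\ac$ and $\ac\star_M\ac^{-,\dag}=\ac\star_M\ac^\dag$, together with the fact from Lemma~\ref{111}(a) that $\ac^{-,\dag}\in\ac\{1,2,3\}$; membership in $\ac\{-,\dag\}$ will be certified through the equivalence in Theorem~\ref{113}.

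For $\scc\subseteq\ac\{-,\dag\}$, I would fix an arbitrary $\wc$, set $\xc=\ac^{-,\dag}+(\ic-\ac^{-,\dag}\star_M\ac)\star_M\wc\star_M\ac\star_M\ac^{-,\dag}$, and verify the two equations of Theorem~\ref{113}(b). The first step is to note that, since $\ac^{-,\dag}\in\ac\{1\}$, left-multiplication by $\ac$ annihilates the factor $\ic-\ac^{-,\dag}\star_M\ac$; hence $\ac\star_M\xc=\ac\star_M\ac^{-,\dag}=\ac\star_M\ac^\dag$, which is one of the required equations. The second step feeds this back in: replacing $\ac\star_M\xc$ by $\ac\star_M\ac^{-,\dag}$ in $\xc\star_M\ac\star_M\xc$ and using $\ac^{-,\dag}\star_M\ac\star_M\ac^{-,\dag}=\ac^{-,\dag}$ together with $\ac\star_M\ac^{-,\dag}\star_M\ac=\ac$ collapses the product back to $\xc$. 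Theorem~\ref{113} then places $\xc$ in $\ac\{-,\dag\}$.

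For the reverse inclusion $\ac\{-,\dag\}\subseteq\scc$, I would take $\yc\in\ac\{-,\dag\}$, so that $\yc=\dc\star_M\ac\star_M\ac^\dag$ for some $\dc\in\ac\{1\}$, and write the fixed inverse as $\ac^{-,\dag}=\ac^-\star_M\ac\star_M\ac^\dag$ with $\ac^-\in\ac\{1\}$. Lemma~\ref{112}, applied with $\ac^-$ as the reference $\{1\}$-inverse, supplies a tensor $\wc$ with $\dc=\ac^-+\wc-\ac^-\star_M\ac\star_M\wc\star_M\ac\star_M\ac^-$. Substituting this into $\yc=\dc\star_M\ac\star_M\ac^\dag$ and simplifying each summand by the absorption identities---using $\ac^-\star_M\ac\star_M\ac^\dag=\ac^{-,\dag}$, $\ac\star_M\ac^\dag=\ac\star_M\ac^{-,\dag}$, and $\ac^-\star_M\ac=\ac^{-,\dag}\star_M\ac$---reassembles exactly the generator $\ac^{-,\dag}+(\ic-\ac^{-,\dag}\star_M\ac)\star_M\wc\star_M\ac\star_M\ac^{-,\dag}$, so $\yc\in\scc$ with the same free parameter $\wc$.

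The bookkeeping in both directions is routine once the right lemmas are named, so I expect the only delicate point to be the reverse inclusion: one must be disciplined about rewriting every $\ac\star_M\ac^\dag$ block and every $\ac^-\star_M\ac$ block in terms of $\ac^{-,\dag}$, so that the mixed expression in $\ac^-$, $\ac^\dag$, and $\wc$ telescopes into the single generator of $\scc$ rather than leaving behind stray $\ac^-$ or $\ac^\dag$ factors. Matching the free parameter produced by Lemma~\ref{112} to the free parameter $\wc$ of $\scc$ is precisely what makes the two parametrizations coincide.
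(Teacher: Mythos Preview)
Your proposal is correct and follows essentially the same route as the paper. The only cosmetic difference is that for the inclusion $\scc\subseteq\ac\{-,\dag\}$ you invoke the characterization in Theorem~\ref{113}(b) (checking $\xc\star_M\ac\star_M\xc=\xc$ and $\ac\star_M\xc=\ac\star_M\ac^\dag$), whereas the paper verifies membership in $\ac\{1,2,3\}$ and appeals to Theorem~\ref{113}(c); the reverse inclusion via Lemma~\ref{112} is identical.
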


\begin{theorem}
Let $\ac\in\mathbb{C}^{n_1\times n_2\times n_3}$. Then, the following statements are equivalent.
\begin{description}
  \item  [(a)] $\xc=\ac^{-,\dag}$.
  \item  [(b)] $\xc\star_M\ac\star_M\xc=\xc$, $\ac\star_M\xc\star_M\ac=\ac$, $\xc\star_M\ac=\ac^-\star_M\ac$ and $\ac\star_M\xc=\ac\star_M\ac^\dag$.
   \item  [(c)] $\xc=\xc\star_M\ac\star_M\ac^\dag$ and $\xc\star_M\ac=\ac^-\star_M\ac$.
    \item  [(d)] $\xc=\xc\star_M\ac\star_M\ac^\dag$ and $\xc\star_M\ac\star_M\ac^\dag=\ac^-\star_M\ac\star_M\ac^\dag$.
     \item  [(e)] $\xc=\xc\star_M\ac\star_M\ac^\dag$ and $\xc\star_M\ac\star_M\ac^*=\ac^-\star_M\ac\star_M\ac^*$.
\end{description}
\begin{proof}
$\textbf{(a)}\Longrightarrow\textbf{(b)}$ By Lemma \ref{111} and Theorem \ref{113}, one has this implying.\\
$\textbf{(b)}\Longrightarrow\textbf{(c)}$ Due to $\xc=\xc\star_M\left(\ac\star_M\xc\right)=\xc\star_M\ac\star_M\ac^\dag$, one has this implication. \\
$\textbf{(c)}\Longrightarrow\textbf{(d)}$ It is evident.\\
$\textbf{(d)}\Longrightarrow\textbf{(e)}$ Multiplying $\xc\star_M\ac\star_M\ac^\dag=\ac^-\star_M\ac\star_M\ac^\dag$ by $\ac\star_M\ac^*$ from the right hand side, we get
\begin{align*}
  \xc\star_M\ac\star_M\ac^\dag\star_M\ac\star_M\ac^*=\ac^-\star_M\ac\star_M\ac^\dag\star_M\ac\star_M\ac^*,
\end{align*}
which implies $\xc\star_M\ac\star_M\ac^*=\ac^-\star_M\ac\star_M\ac^*$.\\
$\textbf{(e)}\Longrightarrow$ \textbf{(a)} By $\xc=\xc\star_M\ac\star_M\ac^\dag$ and $\xc\star_M\ac\star_M\ac^*=\ac^-\star_M\ac\star_M\ac^*$, one has
\begin{align*}
 \xc&=\xc\star_M\ac\star_M\ac^\dag=\xc\star_M\ac\star_M\ac^\dag\star_M\ac\star_M\ac^\dag=\xc\star_M\ac\star_M\ac^*\star_M\left(\ac^\dag\right)^*\star_M\ac^\dag\\
 &=\ac^-\star_M\ac\star_M\ac^*\star_M\left(\ac^\dag\right)^*\star_M\ac^\dag=\ac^-\star_M\ac\star_M\ac^\dag\star_M\ac\star_M\ac^\dag\\
 &=\ac^-\star_M\ac\star_M\ac^\dag.
\end{align*}
\end{proof}
\end{theorem}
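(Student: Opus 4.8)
The plan is to prove the five statements equivalent through the single cycle $\textbf{(a)}\Rightarrow\textbf{(b)}\Rightarrow\textbf{(c)}\Rightarrow\textbf{(d)}\Rightarrow\textbf{(e)}\Rightarrow\textbf{(a)}$. The early links are short consequences of two facts already available: Lemma~\ref{111}, which gives $\ac^{-,\dag}\star_M\ac=\ac^-\star_M\ac$ and $\ac\star_M\ac^{-,\dag}=\ac\star_M\ac^\dag$, and Theorem~\ref{113}, which identifies $\ac\{-,\dag\}$ with $\ac\{1,2,3\}$. Throughout I would use the Moore--Penrose relations $\ac\star_M\ac^\dag\star_M\ac=\ac$ and $\ac^\dag\star_M\ac\star_M\ac^\dag=\ac^\dag$, together with the Hermitian identity $\ac^\dag\star_M\ac=(\ac^\dag\star_M\ac)^*=\ac^*\star_M(\ac^\dag)^*$, which follows from Lemma~\ref{ppaa} and the self-adjointness of $\ac^\dag\star_M\ac$.

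For $\textbf{(a)}\Rightarrow\textbf{(b)}$: if $\xc=\ac^{-,\dag}$ then Theorem~\ref{113} gives $\xc\in\ac\{1,2,3\}$, supplying $\xc\star_M\ac\star_M\xc=\xc$ and $\ac\star_M\xc\star_M\ac=\ac$, while Lemma~\ref{111} supplies $\xc\star_M\ac=\ac^-\star_M\ac$ and $\ac\star_M\xc=\ac\star_M\ac^\dag$. For $\textbf{(b)}\Rightarrow\textbf{(c)}$: substituting $\ac\star_M\xc=\ac\star_M\ac^\dag$ into $\xc=\xc\star_M\ac\star_M\xc$ gives $\xc=\xc\star_M\ac\star_M\ac^\dag$, and the other equation of (c) already sits in (b). The step $\textbf{(c)}\Rightarrow\textbf{(d)}$ is immediate: right-multiplying $\xc\star_M\ac=\ac^-\star_M\ac$ by $\ac^\dag$ yields $\xc\star_M\ac\star_M\ac^\dag=\ac^-\star_M\ac\star_M\ac^\dag$.

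For $\textbf{(d)}\Rightarrow\textbf{(e)}$, I would right-multiply $\xc\star_M\ac\star_M\ac^\dag=\ac^-\star_M\ac\star_M\ac^\dag$ by $\ac\star_M\ac^*$ and collapse the central factor with $\ac\star_M\ac^\dag\star_M\ac=\ac$, so that both sides reduce to $\xc\star_M\ac\star_M\ac^*=\ac^-\star_M\ac\star_M\ac^*$.

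The main obstacle is $\textbf{(e)}\Rightarrow\textbf{(a)}$, since the hypothesis only controls $\xc\star_M\ac\star_M\ac^*$ yet I must recover $\xc=\ac^-\star_M\ac\star_M\ac^\dag=\ac^{-,\dag}$. The device I would use starts from $\xc=\xc\star_M\ac\star_M\ac^\dag$, expands the trailing $\ac^\dag$ via $\ac^\dag=\ac^\dag\star_M\ac\star_M\ac^\dag$ to obtain $\xc=\xc\star_M\ac\star_M\ac^\dag\star_M\ac\star_M\ac^\dag$, and then rewrites the middle $\ac^\dag\star_M\ac$ as $\ac^*\star_M(\ac^\dag)^*$ so that the prefix becomes exactly $\xc\star_M\ac\star_M\ac^*$. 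Now hypothesis (e) applies and replaces this prefix by $\ac^-\star_M\ac\star_M\ac^*$; reading the same Hermitian identity backwards and collapsing with $\ac^\dag\star_M\ac\star_M\ac^\dag=\ac^\dag$ then gives $\xc=\ac^-\star_M\ac\star_M\ac^\dag=\ac^{-,\dag}$. The only genuine subtlety is recognizing that $\ac\star_M\ac^\dag$ can be routed through $\ac^*$ so that the partial data in (e) becomes usable; everything else is bookkeeping with the four defining equations.
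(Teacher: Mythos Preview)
Your proposal is correct and follows essentially the same route as the paper: the same cycle $\textbf{(a)}\Rightarrow\textbf{(b)}\Rightarrow\textbf{(c)}\Rightarrow\textbf{(d)}\Rightarrow\textbf{(e)}\Rightarrow\textbf{(a)}$, the same appeals to Lemma~\ref{111} and Theorem~\ref{113} for the first step, and the same device in $\textbf{(e)}\Rightarrow\textbf{(a)}$ of inserting $\ac^\dag=\ac^\dag\star_M\ac\star_M\ac^\dag$ and rewriting $\ac^\dag\star_M\ac$ as $\ac^*\star_M(\ac^\dag)^*$ to expose the factor $\xc\star_M\ac\star_M\ac^*$. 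Your write-up is in fact slightly more explicit than the paper's on the intermediate steps, but the arguments coincide.
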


\subsection{The 1-D inverse of a tensor}

In this subsection, we will study the 1-D inverse of the tensor under the M-product.

\begin{definition}\cite{SPBS}
Let $\ac \in \CC^{n_1\times n_1\times n_3}$ and $\ind\left(\ac\right) = k$. The unique solution $\wc$ to the following equations
\begin{equation}\label{mpk2}
\textnormal{(I)} \ \wc\star_M\ac^{k+1} = \ac^k, \quad \textnormal{(II)} \ \wc\star_M\ac\star_M\wc = \wc, \quad \textnormal{(III)} \ \ac\star_M\wc = \wc\star_M\ac,
\end{equation}
is called the \textbf{Drazin inverse} of the tensor $\ac$ and is denoted by $\ac^D$.
\end{definition}

\begin{lemma}\label{ffd}
Let $\ac\in\mathbb{C}^{n_1\times n_1\times n_3}$ with $\ind\left(\ac\right) = k$ and $\ac^-$ be a fixed $\left\{1\right\}$-inverse of $\ac$. Then, $\ac^-\star_M\ac\star_M\ac^D \in \mathbb{C}^{n_1\times n_1\times n_3}$ is the unique solution of the following system of tensor equations:
\begin{align}\label{abc}
 \xc\star_M\ac\star_M\xc = \xc,~\xc\star_M\ac^k=\ac^-\star_M\ac^k~\text{and}~\ac\star_M\xc=\ac\star_M\ac^D.
\end{align}
\begin{proof}
Let $\xc=\ac^-\star_M\ac\star_M\ac^D$. Then, we have
\begin{align*}
 \xc\star_M\ac\star_M\xc&=\ac^-\star_M\ac\star_M\ac^D\star_M\ac\star_M\ac^-\star_M\ac\star_M\ac^D=\ac^-\star_M\ac\star_M\ac^D\star_M\ac\star_M\ac^D\\
 &=\ac^-\star_M\ac\star_M\ac^D=\xc,
\end{align*}
\begin{align*}
  \xc\star_M\ac^k=\ac^-\star_M\ac\star_M\ac^D\star_M\ac^k=\ac^-\star_M\ac^D\star_M\ac^{k+1}=\ac^-\star_M\ac^k,
\end{align*}
and $\ac\star_M\xc=\ac\star_M\ac^-\star_M\ac\star_M\ac^D=\ac\star_M\ac^D$. Hence, $\xc$ is a solution of (\ref{abc}).

In the following, we will prove the uniqueness of $\xc$. Now, we suppose that there is another solution $\yc$, satisfying the equation
(\ref{abc}). We will have
\begin{align*}
  \yc=\yc\star_M\ac\star_M\yc=\yc\star_M\ac\star_M\ac^D=\yc\star_M\ac^k\star_M\left(\ac^D\right)^k=\ac^-\star_M\ac^k\star_M\left(\ac^D\right)^k=\ac^-\star_M\ac\star_M\ac^D=\xc.
\end{align*}
\end{proof}
\end{lemma}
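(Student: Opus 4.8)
The plan is to prove the statement in two stages: first verify that the candidate tensor $\xc = \ac^-\star_M\ac\star_M\ac^D$ satisfies the three equations of the system \eqref{abc}, and then show that any solution must coincide with it, so that existence and uniqueness are both obtained. Throughout I would rely on the defining relations of the Drazin inverse in \eqref{mpk2}, most importantly the commutativity $\ac\star_M\ac^D = \ac^D\star_M\ac$, the relation $\ac^D\star_M\ac\star_M\ac^D = \ac^D$, and the power relation $\ac^D\star_M\ac^{k+1} = \ac^k$, together with the defining property $\ac\star_M\ac^-\star_M\ac = \ac$ of a $\{1\}$-inverse and the associativity and distributivity of the M-product recorded earlier.

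For the existence part I would check the three equations one at a time. The relation $\ac\star_M\xc = \ac\star_M\ac^D$ is the quickest: substituting the definition of $\xc$ and using $\ac\star_M\ac^-\star_M\ac = \ac$ collapses the left-hand side immediately. For $\xc\star_M\ac^k = \ac^-\star_M\ac^k$, I would push $\ac^D$ past $\ac$ using commutativity and then apply $\ac^D\star_M\ac^{k+1} = \ac^k$. The equation $\xc\star_M\ac\star_M\xc = \xc$ is the most computational: after substitution one isolates the block $\ac^D\star_M\ac\star_M\ac^-\star_M\ac$, which simplifies to $\ac^D\star_M\ac$ by the $\{1\}$-inverse property, and a final use of $\ac^D\star_M\ac\star_M\ac^D = \ac^D$ returns $\xc$.

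For uniqueness, suppose $\yc$ is any solution of \eqref{abc}. The idea is to start from $\yc = \yc\star_M\ac\star_M\yc$, replace $\ac\star_M\yc$ by $\ac\star_M\ac^D$ via the third equation, and then rewrite $\ac\star_M\ac^D$ in the power form $\ac^k\star_M(\ac^D)^k$ so that the second equation $\yc\star_M\ac^k = \ac^-\star_M\ac^k$ applies; reversing the power identity then yields $\yc = \ac^-\star_M\ac\star_M\ac^D = \xc$. The step that needs a short separate justification, and the only real obstacle, is the identity $\ac\star_M\ac^D = \ac^k\star_M(\ac^D)^k$. I would establish it by induction: using commutativity of $\ac$ and $\ac^D$ together with $\ac^D\star_M\ac\star_M\ac^D = \ac^D$, one shows $(\ac^D)^j\star_M\ac^j = \ac\star_M\ac^D$ for every $j\ge 1$ and specializes to $j = k$. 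Once this identity is in hand, the uniqueness chain is a direct substitution, and no further structural input (such as the singular value decomposition) is required.
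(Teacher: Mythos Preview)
Your proposal is correct and follows essentially the same route as the paper: verify the three equations for $\xc=\ac^-\star_M\ac\star_M\ac^D$ using the $\{1\}$-inverse relation together with the Drazin identities, then for uniqueness run the chain $\yc=\yc\star_M\ac\star_M\yc=\yc\star_M\ac\star_M\ac^D=\yc\star_M\ac^k\star_M(\ac^D)^k=\ac^-\star_M\ac^k\star_M(\ac^D)^k=\ac^-\star_M\ac\star_M\ac^D$. The only difference is that you spell out the inductive justification of $\ac\star_M\ac^D=\ac^k\star_M(\ac^D)^k$, which the paper simply invokes as a known Drazin-inverse fact.
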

By Lemma \ref{ffd}, we can  give the definition  of the 1-D inverse for
tensors.
\begin{definition}
Let $\ac\in\mathbb{C}^{n_1\times n_1\times n_3}$ with $\ind\left(\ac\right) = k$ and $\ac^-$ be a fixed $\left\{1\right\}$-inverse of $\ac$. A tensor $\xc\in\mathbb{C}^{n_1\times n_1\times n_3}$ is called the 1-D inverse of $\ac$ if it satisfies

\begin{equation}\label{mpkk12}
\textnormal{(I)} \ \xc\star_M\ac\star_M\xc = \xc, \quad \textnormal{(II)} \ \xc\star_M\ac^k=\ac^-\star_M\ac^k, \quad \textnormal{(III)} \ \ac\star_M\xc=\ac\star_M\ac^D.
\end{equation}
We denote the 1-D inverse of $\ac$ by $\ac^{-,\ D}$. Clearly, $\ac^{-,\ D}=\ac^-\star_M\ac\star_M\ac^D$.

\end{definition}

%It is important to observe that every fixed inner inverse $\ac^-$ of $\ac$ may
%produce a different 1D inverse of $\ac$. From now on, when we refer to the 1D inverse
%of $\ac$, we are assuming that an inner $\ac^-$ of $\ac$ has been previously fixed and will be not be explicitly indicated.

In the following, we will give an algorithm to compute the 1-D inverse.

\begin{algorithm}[H]
\caption{Computing the 1-D inverse under the M-product}	
\KwIn{$\ac\in\mathbb{C}^{n_1\times n_1\times n_3}$ and $M \in\mathbb{C}^{n_3\times n_3}$}
\KwOut {$\xc=\ac^{-,\ D}$}
\begin{enumerate}\label{a2}
\item Compute $\widehat{\ac}=\ac\times_3M$
\item $k=\ind\left(\widehat{\ac}\right)$
\item $\textbf{For}$ $i=1:n_3$ \textbf{do}
\item ~~~~~~$\widehat{\ac^D}^{\left(i\right)}=\widehat{\ac^k}^{\left(i\right)}\cdot\left(\left(\widehat{\ac^{2k+1}}\right)^\dag\right)^{\left(i\right)}\cdot\widehat{\ac^k}^{\left(i\right)}$
\item \textbf{End for}
\item $\textbf{For}$ $i=1:n_3$ \textbf{do}
\item ~~~~~~$\widehat{\pc}^{\left(i\right)}=\widehat{\ac^{-}}^{\left(i\right)}\cdot\widehat{\ac}^{\left(i\right)}\cdot\widehat{\ac^D}^{\left(i\right)}$
\item \textbf{End for}
\item Compute $\xc=\widehat{\pc}\times_3M^{-1}$
\item \textbf{Return} $\ac^{-,\ D}=\xc$.
\end{enumerate}
\end{algorithm}

\begin{example}
Let $\ac\in\mathbb{C}^{3\times3\times3}$ and $M\in\mathbb{C}^{3\times3}$  with entries
$$\ac^{\left(1\right)}=
\begin{bmatrix}
0 & 1   & 1\\
0 & -1   & 0\\
1 & -1  & 1
\end{bmatrix},~
\ac^{\left(2\right)}=
\begin{bmatrix}
1 & 0 & 1 \\
0 & 0 & 0 \\
1 & 0 & 0
\end{bmatrix},~
\ac^{(3)}=
\begin{bmatrix}
0   & 0& 0\\
0   & 1& 0\\
-1  & 1& 0
\end{bmatrix},~
M=
\begin{bmatrix}
1 & 0 & 1\\
0 & 1 & 0 \\
0 & 1 & 1
\end{bmatrix}.
$$
Since $\ind\left(\widehat{\ac}^{\left(1\right)}\right)=\ind\left(\widehat{\ac}^{\left(2\right)}\right)=\ind\left(\widehat{\ac}^{\left(3\right)}\right)=2$, we get  the  index of $\ac$ is $k=2$.  Now, if we fix one $\left\{1\right\}$-inverse of $\ac$, we can get
$$
\left(\ac^{-}\right)^{\left(1\right)}=
\begin{bmatrix}
1 & 1 & 1 \\
2 & 1 & -1 \\
0 & 2 & 1
\end{bmatrix},\
\left(\ac^{-}\right)^{\left(2\right)}=
\begin{bmatrix}
0 & 1 & 1 \\
1 & 1 & 1 \\
1 & 1 & -1
\end{bmatrix},\
\left(\ac^{-}\right)^{\left(3\right)}=
\begin{bmatrix}
0 & 0 & 0 \\
-1 & -1  & 0 \\
0       & -2      & 0
\end{bmatrix}.
$$
By Algorithm \ref{a2}, we can calculate $\xc=\ac^{-,\ D}$, that is
$$\mathcal X^{\left(1\right)}=
\begin{bmatrix}
0 & -2 & 3\\
1 & -1 & 1\\
0 & 3  & -1
\end{bmatrix},\
\mathcal X^{\left(2\right)}=
\begin{bmatrix}
0 & 0 & 1 \\
1 & 0 & 1 \\
1 & 0 & -1
\end{bmatrix},\
\mathcal X^{\left(3\right)}=
\begin{bmatrix}
0 & 2 & -1 \\
-1& 1 &  -1\\
0 &-3 & 2
\end{bmatrix}.
$$
\end{example}

Next, we discuss several characteristics of the 1-D inverse of a tensor.

\begin{lemma}\label{222}
Let $\gc\in\mathbb{C}^{n_1\times n_2\times n_3}$ and $\hc\in\mathbb{C}^{n_2\times n_1\times n_3}$. Then
$$\left(\gc\star_M\hc\right)^D=\gc\star_M\left[\left(\hc\star_M\gc\right)^D\right]^2\star_M\hc$$.
\begin{proof}
By (\ref{mmm}), one has
\begin{eqnarray*}
  \gc=\matt^{-1}\left[\matt(\gc)\right]=\matt^{-1}\left[\diag\left(
                           \widehat{\gc}^{\left(1\right)},\ldots,
                           \widehat{\gc}^{\left(n_3\right)}
                         \right)\right],
\end{eqnarray*}
and
\begin{eqnarray*}
  \hc=\matt^{-1}\left[\matt\left(\hc\right)\right]=\matt^{-1}\left[\diag\left(
                           \widehat{\hc}^{\left(1\right)},\ldots,
                           \widehat{\hc}^{\left(n_3\right)}
                         \right)\right].
\end{eqnarray*}
By {\rm \cite{A}}, We get $\left(\widehat{\gc}^{\left(i\right)}\widehat{\hc}^{\left(i\right)}\right)^D
=\widehat{\gc}^{\left(i\right)}\left[\left(\widehat{\hc}^{\left(i\right)}\widehat{\gc}^{\left(i\right)}\right)^D\right]^2\widehat{\hc}^{\left(i\right)}$, $i=[n_3]$.
Then
\begin{align*}
\left(\gc\star_M\hc\right)^D&=\matt^{-1}\left[\diag\left(\widehat{\gc}^{\left(1\right)}\left[\left(\widehat{\hc}^{\left(1\right)}\widehat{\gc}^{\left(1\right)}\right)^D\right]^2\widehat{\hc}^{\left(1\right)}
,\ldots,\widehat{\gc}^{\left(n_3\right)}\left[\left(\widehat{\hc}^{\left(n_3\right)}\widehat{\gc}^{\left(n_3\right)}\right)^D\right]^2\widehat{\hc}^{\left(n_3\right)}\right)\right]\\
&=\gc\star_M\left[\left(\hc\star_M\gc\right)^D\right]^2\star_M\hc.
\end{align*}
\end{proof}
\end{lemma}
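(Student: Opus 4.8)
The plan is to transport the identity to the (known) matrix Cline formula through the block-diagonalization map $\matt$. First I would record, from the alternative definition \eqref{zzz}, that $\matt$ converts the M-product into ordinary block-diagonal matrix multiplication, so that $\matt(\cc\star_M\dc)=\matt(\cc)\matt(\dc)$ for tensors of compatible sizes. Applying this to the two products appearing in the statement gives
\begin{equation*}
\matt(\gc\star_M\hc)=\diag\left(\widehat{\gc}^{(1)}\widehat{\hc}^{(1)},\ldots,\widehat{\gc}^{(n_3)}\widehat{\hc}^{(n_3)}\right),\qquad
\matt(\hc\star_M\gc)=\diag\left(\widehat{\hc}^{(1)}\widehat{\gc}^{(1)},\ldots,\widehat{\hc}^{(n_3)}\widehat{\gc}^{(n_3)}\right).
\end{equation*}

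The crucial step is to show that the tensor Drazin inverse is computed frontal-slice by frontal-slice, i.e. writing $\bc=\gc\star_M\hc$ one has $\matt(\bc^D)=\diag\big((\widehat{\bc}^{(1)})^D,\ldots,(\widehat{\bc}^{(n_3)})^D\big)$. To verify this, let $\yc$ be the tensor determined by $\matt(\yc)=\diag\big((\widehat{\bc}^{(i)})^D\big)_{i}$. Because $\matt$ is multiplicative and $\ind(\bc)=\max_{i}\ind(\widehat{\bc}^{(i)})=:k$ by the definition of the index, each of the three defining equations (I)--(III) of the Drazin inverse in \eqref{mpk2} for $\bc$ becomes, after applying $\matt$, a single block-diagonal matrix equation that holds if and only if it holds in every diagonal block. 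Since $(\widehat{\bc}^{(i)})^D$ satisfies the corresponding matrix Drazin equations in each block, $\yc$ satisfies (I)--(III), and the uniqueness asserted in the definition of the Drazin inverse forces $\yc=\bc^D$.

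With the slice-wise computation in hand, I would invoke the matrix Cline formula (cited as \cite{A}) in each frontal slice, namely $\big(\widehat{\gc}^{(i)}\widehat{\hc}^{(i)}\big)^D=\widehat{\gc}^{(i)}\big[(\widehat{\hc}^{(i)}\widehat{\gc}^{(i)})^D\big]^2\widehat{\hc}^{(i)}$ for $i\in[n_3]$. Reassembling these slices under $\matt^{-1}$ and reading the block-diagonal products back as M-products (using multiplicativity of $\matt$ once more, together with \eqref{mmm}) yields exactly $(\gc\star_M\hc)^D=\gc\star_M\big[(\hc\star_M\gc)^D\big]^2\star_M\hc$.

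The main obstacle is the slice-wise reduction of the Drazin inverse in the second paragraph: one must check that the index hypothesis and all three Drazin equations genuinely decouple across the frontal slices under $\matt$, and then invoke uniqueness. Once $\matt$ is recognized as a multiplicative isomorphism onto block-diagonal matrices that carries the tensor Drazin inverse to the block-wise matrix Drazin inverse, the remainder is routine bookkeeping and a direct appeal to the matrix identity.
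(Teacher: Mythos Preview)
Your proposal is correct and follows essentially the same approach as the paper: reduce to the frontal slices via the $\matt$ map, apply the matrix Cline formula $\big(GH\big)^D=G\big[(HG)^D\big]^2H$ from \cite{A} in each slice, and reassemble. You are actually more careful than the paper in justifying that the tensor Drazin inverse is computed slice-wise (the paper takes this for granted), but the underlying strategy is identical.
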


\begin{theorem}
Let $\ac\in\mathbb{C}^{n_1\times n_1\times n_3}$ with $\ind\left(\ac\right) = k$ and $\ac^-$ be a fixed $\left\{1\right\}$-inverse of $\ac$. Then $\ac^{-,\ D}=\left(\ac^-\star_M\ac^2\right)^D$.
\begin{proof}
By Lemma \ref{222}, we have
\begin{align*}
 \left(\ac^-\star_M\ac^2\right)^D&=\left(\ac^-\star_M\ac\star_M\ac\right)^D=\ac^-\star_M\ac\star_M\left[\left(\ac\star_M\ac^-\star_M\ac\right)^D\right]^2\star_M\ac\\
 &=\ac^-\star_M\ac\star_M\left(\ac^D\right)^2\star_M\ac=\ac^-\star_M\ac\star_M\ac^D\star_M\ac\star_M\ac^D\\
 &=\ac^-\star_M\ac\star_M\ac^D=\ac^{-,\ D}.
\end{align*}
\end{proof}
\end{theorem}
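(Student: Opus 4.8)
The plan is to compute $\left(\ac^-\star_M\ac^2\right)^D$ directly by invoking Lemma \ref{222}, which rewrites the Drazin inverse of a product $\gc\star_M\hc$ through the Drazin inverse of the reversed product $\hc\star_M\gc$. The one genuine choice to make is how to factor $\ac^-\star_M\ac^2$ so that this reversal collapses. I would write $\ac^-\star_M\ac^2=\left(\ac^-\star_M\ac\right)\star_M\ac$ and apply Lemma \ref{222} with $\gc=\ac^-\star_M\ac$ and $\hc=\ac$; since $\ac,\ac^-\in\mathbb{C}^{n_1\times n_1\times n_3}$, both tensors are square and the hypotheses hold with $n_2=n_1$. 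This produces
\[
\left(\ac^-\star_M\ac^2\right)^D=\left(\ac^-\star_M\ac\right)\star_M\left[\left(\ac\star_M\ac^-\star_M\ac\right)^D\right]^2\star_M\ac.
\]

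The crucial simplification—and the only place the $\{1\}$-inverse property enters—is recognizing that $\ac\star_M\ac^-\star_M\ac=\ac$, so the inner reversed product is just $\ac$ and its Drazin inverse is $\ac^D$. Substituting reduces the expression to $\ac^-\star_M\ac\star_M\left(\ac^D\right)^2\star_M\ac$.

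The remainder is routine Drazin-inverse algebra, which I would justify slice-wise through the isomorphism $\matt(\cdot)$: because $\star_M$ becomes block-diagonal matrix multiplication and $\ac^D$ is defined frontal-slice-wise, the commuting relation $\ac\star_M\ac^D=\ac^D\star_M\ac$ and the idempotency of $\ac\star_M\ac^D$ (equivalently $\ac^D\star_M\ac\star_M\ac^D=\ac^D$) are simply the images under $\matt^{-1}$ of the corresponding matrix identities on each $\widehat{\ac}^{(i)}$. Using commutativity to move one factor of $\ac^D$ past $\ac$, I rewrite $\ac\star_M\left(\ac^D\right)^2\star_M\ac=\ac\star_M\ac^D\star_M\ac\star_M\ac^D$, and then idempotency of $\ac\star_M\ac^D$ collapses this to $\ac\star_M\ac^D$. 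Hence the whole quantity becomes $\ac^-\star_M\ac\star_M\ac^D$, which is exactly $\ac^{-,\ D}$ by definition.

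I do not expect a real obstacle: the entire content lies in selecting the factorization $\gc=\ac^-\star_M\ac$, $\hc=\ac$ and in the identity $\ac\star_M\ac^-\star_M\ac=\ac$. The only point deserving a line of care is legitimizing the Drazin manipulations at the tensor level, which I would handle exactly as in the proof of Lemma \ref{222}, namely by passing to the frontal slices via $\matt(\cdot)$ where the classical matrix Drazin identities apply verbatim.
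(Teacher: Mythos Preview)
Your proposal is correct and follows essentially the same route as the paper: factor $\ac^-\star_M\ac^2$ as $(\ac^-\star_M\ac)\star_M\ac$, apply Lemma \ref{222}, collapse $\ac\star_M\ac^-\star_M\ac$ to $\ac$, and finish with standard Drazin identities. Your added remark justifying the tensor-level Drazin manipulations via $\matt(\cdot)$ is more explicit than the paper's presentation, but the argument is the same.
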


\begin{theorem}
Let $\ac\in\mathbb{C}^{n_1\times n_1\times n_3}$ with $\ind\left(\ac\right) = k$ and $\ac^-$ be a fixed $\left\{1\right\}$-inverse of $\ac$. Then $\ac^{-,\ D}=\ac^D$ if and only if $\ac^D\star_M\ac^k=\ac^-\star_M\ac^k$.

\begin{proof}
$(\Longrightarrow)$:   $\ac^D\star_M\ac^k=\ac^{-,\ D}\star_M\ac^k=\ac^-\star_M\ac\star_M\ac^D\star_M\ac^k=\ac^-\star_M\ac^k$.

$(\Longleftarrow)$: We know that $\ac\star_M\ac^D=\ac^k\star_M\left(\ac^D\right)^k$ from the properties of the Drazin inverse. Thus, we have
\begin{align*}
  \ac^{-,\ D}=\ac^-\star_M\ac\star_M\ac^D=\ac^-\star_M\ac^k\star_M\left(\ac^D\right)^k=\ac^D\star_M\ac^k\star_M\left(\ac^D\right)^k=\ac^D\star_M\ac\star_M\ac^D=\ac^D.
\end{align*}
\end{proof}
\end{theorem}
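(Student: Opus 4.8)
The plan is to prove the two implications separately, in each case reducing to the defining relation $\ac^{-,\ D}=\ac^-\star_M\ac\star_M\ac^D$ and to the standard identities satisfied by the Drazin inverse. Because the Drazin inverse under the $\star_M$ product is defined frontal-slice-wise through the block-diagonalization $\matt(\cdot)$, and the index of $\ac$ is the maximum of the slice indices, every matrix-level Drazin identity transfers verbatim to the tensor setting. The facts I will lean on are the commutation $\ac\star_M\ac^D=\ac^D\star_M\ac$ together with $\ac^D\star_M\ac^{k+1}=\ac^k$, and the consequent identity $\ac\star_M\ac^D=\ac^k\star_M(\ac^D)^k$, which holds slice-wise and hence for $\ac$.

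For the forward implication, I would assume $\ac^{-,\ D}=\ac^D$ and compute $\ac^D\star_M\ac^k$ by substituting $\ac^D=\ac^{-,\ D}=\ac^-\star_M\ac\star_M\ac^D$. This turns $\ac^D\star_M\ac^k$ into $\ac^-\star_M\ac\star_M\ac^D\star_M\ac^k$; using the commutation and $\ac^D\star_M\ac^{k+1}=\ac^k$, the tail $\ac\star_M\ac^D\star_M\ac^k=\ac^D\star_M\ac^{k+1}$ collapses to $\ac^k$, leaving exactly $\ac^-\star_M\ac^k$, as required.

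For the backward implication, I would assume $\ac^D\star_M\ac^k=\ac^-\star_M\ac^k$ and start from $\ac^{-,\ D}=\ac^-\star_M\ac\star_M\ac^D$. Rewriting $\ac\star_M\ac^D$ as $\ac^k\star_M(\ac^D)^k$ produces $\ac^-\star_M\ac^k\star_M(\ac^D)^k$; the hypothesis replaces $\ac^-\star_M\ac^k$ by $\ac^D\star_M\ac^k$, and folding $\ac^k\star_M(\ac^D)^k$ back into $\ac\star_M\ac^D$ gives $\ac^D\star_M\ac\star_M\ac^D=\ac^D$.

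The calculations themselves are short; the only point requiring care, and the step I expect to be the main obstacle, is justifying the auxiliary identity $\ac\star_M\ac^D=\ac^k\star_M(\ac^D)^k$ in the $\star_M$ framework. I would dispatch this by invoking the $\matt$ isomorphism: since $\matt(\ac^D)=\diag((\widehat{\ac}^{(1)})^D,\ldots,(\widehat{\ac}^{(n_3)})^D)$ and products under $\star_M$ correspond to block-diagonal matrix products, the matrix identity $A\,A^D=A^k(A^D)^k$ applied to each $\widehat{\ac}^{(i)}$ assembles into the tensor identity; the same mechanism guarantees the commutation relation and $\ac^D\star_M\ac^{k+1}=\ac^k$ used above.
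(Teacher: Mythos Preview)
Your proposal is correct and follows essentially the same route as the paper: both directions use $\ac^{-,\ D}=\ac^-\star_M\ac\star_M\ac^D$ together with the Drazin identities $\ac^D\star_M\ac^{k+1}=\ac^k$ and $\ac\star_M\ac^D=\ac^k\star_M(\ac^D)^k$, and the chains of equalities you describe match the paper's line by line. Your added justification of the auxiliary identity via the $\matt$ isomorphism is sound and merely makes explicit what the paper invokes as ``properties of the Drazin inverse.''
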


Now, we will discuss the idempotent property of the 1-D inverse.

\begin{lemma} \label{aaa}
Let $\ac\in\mathbb{C}^{n_1\times n_1\times n_3}$ with $\ind\left(\ac\right) = k$ and $\ac^-$ be a fixed $\left\{1\right\}$-inverse of $\ac$. Then
\begin{description}
  \item  [(a)] $\left(\ac^{-,\ D}\right)^2=\ac^-\star_M\ac^D$.
  \item  [(b)] $\ac^{-,\ D}$ is idempotent if and only if $\ac^{-,\ D}=\ac^-\star_M\ac^D$ if and only if $\ac^{-,\ D}=\ac^{-,\ D}\star_M\ac$.
  \end{description}
  \begin{proof} \textbf{(a)} A simple computation gives
    \begin{align*}
    \left(\ac^{-,\ D}\right)^2&=\ac^{-,\ D}\star_M\ac^{-,\ D}=\ac^-\star_M\ac\star_M\ac^D\star_M\ac^-\star_M\ac\star_M\ac^D\\
        &=\ac^-\star_M\ac^D\star_M\left(\ac\star_M\ac^-\star_M\ac\right)\star_M\ac^D=\ac^-\star_M\left(\ac^D\star_M\ac\star_M\ac^D\right)\\
        &=\ac^-\star_M\ac^D.
    \end{align*}
  \textbf{(b)} The first equivalence is followed by (a). Now, let us prove that $\ac^{-,\ D}$ is idempotent if and
only if $\ac^{-,\ D}=\ac^{-,\ D}\star_M\ac$. In fact, if $\ac^{-,\ D}$ is idempotent, by (a), we have $\ac^{-,\ D}=\ac^-\star_M\ac^D$.
Now, $$\ac^{-,\ D}=\ac^-\star_M\ac\star_M\ac^D=\ac^-\star_M\ac^D\star_M\ac=\ac^{-,\ D}\star_M\ac.$$
On the contrary,
\begin{align*}
 \ac^{-,\ D}&=\ac^{-,\ D}\star_M\ac=\ac^-\star_M\ac\star_M\ac^D\star_M\ac=\ac^-\star_M\ac\star_M\ac^D\star_M\ac\star_M\ac^D\star_M\ac\\
 &=\ac^-\star_M\ac\star_M\ac^D\star_M\ac\star_M\ac^-\star_M\ac\star_M\ac^D\star_M\ac=\ac^{-,\ D}\star_M\ac\star_M\ac^{-,\ D}\star_M\ac\\
 &=\ac^{-,\ D}\star_M\ac^{-,\ D}=\left(\ac^{-,\ D}\right)^2.
\end{align*}
 \end{proof}
\end{lemma}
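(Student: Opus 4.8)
The plan is to handle the two parts separately, relying throughout on the defining relation $\ac^{-,\ D}=\ac^-\star_M\ac\star_M\ac^D$ together with the defining identities of the Drazin inverse in (\ref{mpk2}) — in particular the commutativity $\ac\star_M\ac^D=\ac^D\star_M\ac$ and the relation $\ac^D\star_M\ac\star_M\ac^D=\ac^D$ — and the $\{1\}$-inverse identity $\ac\star_M\ac^-\star_M\ac=\ac$, all combined using associativity of $\star_M$. No decompositions are needed; everything reduces to bookkeeping with these four identities.

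For part \textbf{(a)}, I would expand $\left(\ac^{-,\ D}\right)^2=\ac^-\star_M\ac\star_M\ac^D\star_M\ac^-\star_M\ac\star_M\ac^D$, then move $\ac^D$ to the left past the adjacent $\ac$ via commutativity so as to expose the block $\ac\star_M\ac^-\star_M\ac$, collapse that block to $\ac$, and finally collapse $\ac^D\star_M\ac\star_M\ac^D=\ac^D$. What survives is $\ac^-\star_M\ac^D$, which is the asserted identity. For part \textbf{(b)}, the first equivalence is immediate from (a): idempotency means $\left(\ac^{-,\ D}\right)^2=\ac^{-,\ D}$, and since (a) rewrites the left-hand side as $\ac^-\star_M\ac^D$, the two stated conditions literally coincide. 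The forward direction of the second equivalence also uses (a): from idempotency we get $\ac^{-,\ D}=\ac^-\star_M\ac^D$, and then $\ac^{-,\ D}=\ac^-\star_M\ac\star_M\ac^D=\ac^-\star_M\ac^D\star_M\ac=\ac^{-,\ D}\star_M\ac$ after commuting $\ac$ and $\ac^D$.

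The main obstacle is the converse of the second equivalence: assuming $\ac^{-,\ D}=\ac^{-,\ D}\star_M\ac$, I must recover idempotency. The strategy is to expand $\ac^{-,\ D}\star_M\ac$ and manufacture a second copy of $\ac^{-,\ D}$ inside the product. Starting from $\ac^{-,\ D}=\ac^-\star_M\ac\star_M\ac^D\star_M\ac$, I would first lengthen the trailing factor using $\ac^D\star_M\ac=\ac^D\star_M\ac\star_M\ac^D\star_M\ac$, and then insert a copy of $\ac^-\star_M\ac$ using $\ac\star_M\ac^D=\ac\star_M\ac^-\star_M\ac\star_M\ac^D$, so that the whole product regroups as $\ac^{-,\ D}\star_M\ac\star_M\ac^{-,\ D}\star_M\ac$. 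Applying the hypothesis $\ac^{-,\ D}\star_M\ac=\ac^{-,\ D}$ twice then collapses this to $\left(\ac^{-,\ D}\right)^2$, so $\ac^{-,\ D}=\left(\ac^{-,\ D}\right)^2$ and the equivalence closes. The only delicate point is keeping track of exactly which factor to insert at each step and where to regroup, but no genuinely new idea beyond the three Drazin identities is required.
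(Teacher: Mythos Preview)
Your proposal is correct and follows essentially the same route as the paper's proof: the same expansion and commutation in (a), and for the converse in (b) the same chain $\ac^{-,\ D}=\ac^{-,\ D}\star_M\ac=\ac^-\star_M\ac\star_M\ac^D\star_M\ac\star_M\ac^D\star_M\ac=\ac^-\star_M\ac\star_M\ac^D\star_M\ac\star_M\ac^-\star_M\ac\star_M\ac^D\star_M\ac=\ac^{-,\ D}\star_M\ac\star_M\ac^{-,\ D}\star_M\ac=(\ac^{-,\ D})^2$. No differences of substance.
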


\begin{theorem}\label{aas}
Let $\ac\in\mathbb{C}^{n_1\times n_1\times n_3}$ with $\ind\left(\ac\right) = k$ and $\ac^-$ be a fixed $\left\{1\right\}$-inverse of $\ac$. If $\ac^{-,\ D}$ is idempotent, then:
\begin{description}
  \item  [(a)] $\ac^k=\ac^{k+1}$. In addition, $\ac^D\star_M\ac^k=\ac^k$.
  \item  [(b)] $\left(\ac^{-,\ D}\right)^k=\left(\ac^{-,\ D}\right)^k\star_M\ac$. In addition,  $\left(\ac^{-,\ D}\right)^m=\left(\ac^{-,\ D}\right)^m\star_M\ac$ for every $m \in \mathbb{Z}^+$.
  \item  [(c)] $\ac^{-,\ D}=\left(\ac^{-,\ D}\right)^m\star_M\ac^m$ for every $m \in \mathbb{Z}^+$.
  \item  [(d)] $\ac^k\star_M\ac^{-,\ D}=\ac^k$.
  \end{description}
 \begin{proof} \textbf{(a)} By Lemma \ref{aaa} \textbf{(b)}, one has
 \begin{align*}
 \ac^k&=\ac^{k+1}\star_M\ac^D=\ac^k\star_M\ac\star_M\ac^-\star_M\ac\star_M\ac^D\\
 &=\ac^k\star_M\ac\star_M\ac^-\star_M\ac^D\star_M\ac=\ac^k\star_M\ac\star_M\ac^{-,\ D}\star_M\ac\\
 &=\ac^k\star_M\ac\star_M\ac^-\star_M\ac\star_M\ac^D\star_M\ac=\ac^k\star_M\ac\star_M\ac^D\star_M\ac\\
 &=\ac^k\star_M\ac=\ac^{k+1}.
 \end{align*}
In addition, $\ac^k=\ac^{k+1}\star_M\ac^D=\ac^{k}\star_M\ac^D=\ac^D\star_M\ac^{k}$.

\textbf{(b)} By Lemma \ref{aaa} \textbf{(b)}, we get $\left(\ac^{-,\ D}\right)^k=\ac^{-,\ D}=\ac^{-,\ D}\star_M\ac=\left(\ac^{-,\ D}\right)^k\star_M\ac$.

\textbf{(c)} By using Lemma \ref{aaa} \textbf{(b)} again, one has
  \begin{align*}
   \ac^{-,\ D}&=\left(\ac^{-,\ D}\right)^m=\left(\ac^{-,\ D}\right)^{m-1}\star_M\ac^{-,\ D}=\left(\ac^{-,\ D}\right)^{m-1}\star_M\ac^{-,\ D}\star_M\ac=\left(\ac^{-,\ D}\right)^{m-1}\star_M\ac^{-,\ D}\star_M\ac^2\\
   &=\left(\ac^{-,\ D}\right)^{m-1}\star_M\ac^{-,\ D}\star_M\ac^3=\ldots=\left(\ac^{-,\ D}\right)^m\star_M\ac^m.
  \end{align*}

\textbf{(d)} By using \textbf{(a)}, one has
  \begin{align*}
    \ac^k\star_M\ac^{-,\ D}=\ac^{k+1}\star_M\ac^-\star_M\ac\star_M\ac^D=\ac^{k+1}\star_M\ac^D
      =\ac^k.
  \end{align*}
  \end{proof}
\end{theorem}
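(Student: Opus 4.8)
The plan is to route everything through the idempotency hypothesis by invoking the two equivalent reformulations furnished by Lemma \ref{aaa}(b): namely $\ac^{-,\ D} = \ac^-\star_M\ac^D$ and, most usefully, the absorption identity $\ac^{-,\ D} = \ac^{-,\ D}\star_M\ac$. The latter immediately upgrades to $\ac^{-,\ D}\star_M\ac^m = \ac^{-,\ D}$ for every $m\in\mathbb{Z}^+$ by iterating, and since an idempotent tensor satisfies $(\ac^{-,\ D})^m = \ac^{-,\ D}$ for all $m$, these two facts dispatch parts (b) and (c) with essentially no computation. For (b) I would write $(\ac^{-,\ D})^m = \ac^{-,\ D} = \ac^{-,\ D}\star_M\ac = (\ac^{-,\ D})^m\star_M\ac$; for (c) I would write $(\ac^{-,\ D})^m\star_M\ac^m = \ac^{-,\ D}\star_M\ac^m = \ac^{-,\ D}$.

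For part (a) I would start from the Drazin relation $\ac^k = \ac^{k+1}\star_M\ac^D$ (valid because $\ac^D$ commutes with $\ac$, hence with $\ac^{k+1}$), and then feed in the factorization $\ac^{k+1} = \ac^k\star_M\ac\star_M\ac^-\star_M\ac$ coming from the $\{1\}$-inverse absorption $\ac\star_M\ac^-\star_M\ac = \ac$. The aim is to migrate the string $\ac^-\star_M\ac\star_M\ac^D$ back into the 1-D inverse, using the definition $\ac^{-,\ D} = \ac^-\star_M\ac\star_M\ac^D$ and the idempotency identity $\ac^-\star_M\ac^D = \ac^{-,\ D}$ together with the commutativity $\ac\star_M\ac^D = \ac^D\star_M\ac$, so that the expression telescopes down to $\ac^k\star_M\ac = \ac^{k+1}$. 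Once $\ac^k = \ac^{k+1}$ is established, the ``in addition'' claim is a one-liner: $\ac^k = \ac^{k+1}\star_M\ac^D = \ac^k\star_M\ac^D = \ac^D\star_M\ac^k$.

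Finally, part (d) is a corollary of (a): replacing $\ac^k$ by $\ac^{k+1}$ and expanding $\ac^{-,\ D}$ by its definition yields $\ac^k\star_M\ac^{-,\ D} = \ac^{k+1}\star_M\ac^-\star_M\ac\star_M\ac^D$, after which the absorption $\ac\star_M\ac^-\star_M\ac = \ac$ and the Drazin identity $\ac^{k+1}\star_M\ac^D = \ac^k$ close the argument.

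The step I expect to be the main obstacle is part (a): unlike the other items it is not a direct substitution but a telescoping chain in which the $\{1\}$-inverse absorption, the Drazin commutativity, and the two idempotency reformulations must be applied in precisely the right order so that no stray $\ac^-$ or $\ac^D$ factor survives. Everything else, including parts (b), (c), and (d), reduces to bookkeeping once (a) and Lemma \ref{aaa}(b) are in place.
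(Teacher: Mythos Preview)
Your proposal is correct and follows essentially the same approach as the paper: you invoke Lemma~\ref{aaa}(b) to obtain both reformulations $\ac^{-,\ D}=\ac^-\star_M\ac^D$ and $\ac^{-,\ D}=\ac^{-,\ D}\star_M\ac$, use idempotency to collapse all powers $(\ac^{-,\ D})^m$, and run the same telescoping chain for (a) (Drazin identity, insert $\ac^-\star_M\ac$, commute $\ac$ with $\ac^D$, apply the idempotency reformulation, re-expand, and absorb), with (d) derived from (a) exactly as in the paper. Your identification of (a) as the only nontrivial step and of (b)--(d) as bookkeeping matches the paper's structure precisely.
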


In the following, we characterize the 1-D inverse by using tensor equations.
\begin{theorem}
Let $\ac\in\mathbb{C}^{n_1\times n_1\times n_3}$ with $\ind\left(\ac\right) = k$ and $\ac^-$ be a fixed $\left\{1\right\}$-inverse of $\ac$. Then the following statements are equivalent:
\begin{description}
  \item  [(a)] $\ac^{-,\ D}=\xc$.
  \item  [(b)] $\xc\star_M\ac\star_M\xc=\xc$, $\xc\star_M\ac^k=\ac^-\star_M\ac^k$, $\ac\star_M\xc\star_M\ac=\ac\star_M\ac^D\star_M\ac$ and $\ac\star_M\xc=\ac\star_M\ac^D$.
  \item  [(c)] $\ac^-\star_M\ac\star_M\xc=\xc$, $\xc\star_M\ac^k=\ac^-\star_M\ac^k$ and $\xc=\xc\star_M\ac\star_M\ac^D$.
  \item  [(d)] $\ac^-\star_M\ac\star_M\xc\star_M\ac\star_M\ac^D=\xc$ and $\ac\star_M\xc\star_M\ac^k=\ac^k$.
  \item  [(e)] $\ac^-\star_M\ac\star_M\ac^D\star_M\ac=\xc\star_M\ac$, $\ac^k\star_M\xc=\ac^k\star_M\ac^D$ and $\xc=\xc\star_M\ac\star_M\ac^D$.
  %\item  [(f)] $\ac\star_M\ac^D\star_M\ac\star_M\xc=\ac^D\star_M\ac$, $\xc\star_M\ac\star_M\ac^D\star_M\ac=\ac^-\star_M\ac\star_M\ac^D\star_M\ac$, $\xc\star_M\ac\star_M\ac^D\star_M\ac\star_M\xc=\xc$, and $\ac\star_M\ac^D\star_M\xc\star_M\ac\star_M\ac^D\star_M\ac=\ac\star_M\ac^D\star_M\ac$.
  \end{description}
  \begin{proof}
\textbf{(a)}$\Longrightarrow$ \textbf{(b)} Let $\xc=\ac^{-,\ D}$. By Lemma \ref{ffd}, it is enough to see $\ac\star_M\xc\star_M\ac=\ac\star_M\ac^D\star_M\ac$.  Moreover,  $$\ac\star_M\xc\star_M\ac=\ac\star_M\ac^{-,D}\star_M\ac=\ac\star_M\ac^-\star_M\ac\star_M\ac^D\star_M\ac=\ac\star_M\ac^D\star_M\ac.$$
\textbf{(b)} $\Longrightarrow$ \textbf{(c)} By $\xc\star_M\ac\star_M\ac^D=\xc\star_M\ac\star_M\xc=\xc$ and $\xc\star_M\ac^k=\ac^-\star_M\ac^k$, one has
\begin{align*}
  \ac^-\star_M\ac\star_M\xc&=\ac^-\star_M\ac\star_M\ac^D=\ac^-\star_M\ac^k\star_M\left(\ac^D\right)^k=\xc\star_M\ac^k\star_M\left(\ac^D\right)^k=\xc\star_M\ac\star_M\ac^D=\xc.
\end{align*}
\textbf{(c)} $\Longrightarrow$ \textbf{(d)} The implication is true since $\ac^-\star_M\ac\star_M\xc=\xc$ implies $\ac\star_M\xc\star_M\ac=\ac^k$ trivially and $$\ac\star_M\xc\star_M\ac^k=\ac\star_M\ac^-\star_M\ac^k=\ac\star_M\ac^-\star_M\ac\star_M\ac^{k-1}=\ac^k$$ trivially and $\ac^-\star_M\ac\star_M\xc\star_M\ac\star_M\ac^D=\xc\star_M\ac\star_M\ac^D=\xc$.\\
\textbf{(d)} $\Longrightarrow$ \textbf{(a)} Suppose $\ac^-\star_M\ac\star_M\xc\star_M\ac\star_M\ac^D=\xc$ and $\ac\star_M\xc\star_M\ac^k=\ac^k$, one has
\begin{align*}
 \xc&=\ac^-\star_M\ac\star_M\xc\star_M\ac\star_M\ac^D=\ac^-\star_M\ac\star_M\xc\star_M\ac^k\star_M\left(\ac^D\right)^k\\
 &=\ac^-\star_M\ac^k\star_M\left(\ac^D\right)^k=\ac^-\star_M\ac\star_M\ac^D=\ac^{-,\ D}.
\end{align*}
(a) $\Longrightarrow$ (e) It is simple to see
$$\xc\star_M\ac\star_M\ac^D=\ac^-\star_M\ac\star_M\ac^D\star_M\ac\star_M\ac^D=\ac^-\star_M\ac\star_M\ac^D=\xc$$
and
$$\ac^k\star_M\xc=\ac^k\star_M\ac^{-,\ D}=\ac^k\star_M\ac^-\star_M\ac\star_M\ac^D=\ac^k\star_M\ac^D.$$
(e) $\Longrightarrow$ (a) It follows from $\xc=\xc\star_M\ac\star_M\ac^D=\ac^-\star_M\ac\star_M\ac^D\star_M\ac\star_M\ac^D=\ac^-\star_M\ac\star_M\ac^D=\ac^{-,\ D}$.\\
%(a) $\Longrightarrow$ (f) Let's verify $\ac\star_M\ac^D\star_M\ac\star_M\xc=\ac\star_M\ac^D\star_M\ac\star_M\ac^{-,D}=\ac\star_M\ac^D\star_M\ac\star_M\ac^-\star_M\ac\star_M\ac^D=\ac\star_M\ac^D$.
%$$\xc\star_M\ac\star_M\ac^D\star_M\ac=\ac^{-,D}\star_M\ac\star_M\ac^D\star_M\ac=\ac^-\star_M\ac\star_M\ac^D\star_M\ac\star_M\ac^D\star_M\ac=\ac^-\star_M\ac\star_M\ac^D\star_M\ac,$$
%\begin{align*}
% \xc\star_M\ac\star_M\ac^D\star_M\ac\star_M\xc&=\ac^{-,D}\star_M\ac\star_M\ac^D\star_M\ac\star_M\ac^{-,D}\\
% &=\ac^-\star_M\ac\star_M\ac^D\star_M\ac\star_M\ac^D\star_M\ac\star_M\ac^-\star_M\ac\star_M\ac^D\\
% &=\ac^-\star_M\ac\star_M\ac^D=\ac^{-,D}=\xc,
%\end{align*}
%and
%\begin{align*}
% \ac\star_M\ac^D\star_M\ac\star_M\xc\star_M\ac\star_M\ac^D\star_M\ac&=(\ac\star_M\ac^D\star_M\ac\star_M\xc)\star_M\ac\star_M\ac^D\star_M\ac\\
% &=(\ac\star_M\ac^D)\star_M\ac\star_M\ac^D\star_M\ac\\
% &=\ac\star_M\ac^D\star_M\ac.
%\end{align*}
%(f) $\Longrightarrow$ (a) It is clear from $\xc=\xc\star_M\ac\star_M\ac^D\star_M\ac\star_M\xc=\ac^-\star_M\ac\star_M\ac^D\star_M\ac\star_M\xc=\ac^-\star_M\ac\star_M\ac^D=\ac$.
  \end{proof}
\end{theorem}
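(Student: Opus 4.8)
The plan is to establish the cycle $\textbf{(a)}\Rightarrow\textbf{(b)}\Rightarrow\textbf{(c)}\Rightarrow\textbf{(d)}\Rightarrow\textbf{(a)}$, which yields the equivalence of (a)--(d), and then to attach (e) through the two implications $\textbf{(a)}\Rightarrow\textbf{(e)}$ and $\textbf{(e)}\Rightarrow\textbf{(a)}$. Every step is an identity-pushing argument resting on four ingredients: the characterization of Lemma \ref{ffd}, namely that $\ac^{-,\ D}=\ac^-\star_M\ac\star_M\ac^D$ is the unique solution of $\xc\star_M\ac\star_M\xc=\xc$, $\xc\star_M\ac^k=\ac^-\star_M\ac^k$, $\ac\star_M\xc=\ac\star_M\ac^D$; the $\{1\}$-inverse identity $\ac\star_M\ac^-\star_M\ac=\ac$ together with its consequence $\ac^k\star_M\ac^-\star_M\ac=\ac^k$; the commuting and absorbing Drazin laws $\ac\star_M\ac^D=\ac^D\star_M\ac$, $\ac^D\star_M\ac\star_M\ac^D=\ac^D$, $\ac^{k+1}\star_M\ac^D=\ac^k$; and, crucially, the conversion $\ac\star_M\ac^D=\ac^k\star_M(\ac^D)^k$.

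For $\textbf{(a)}\Rightarrow\textbf{(b)}$ I would take $\xc=\ac^{-,\ D}$: the first three equations are exactly Lemma \ref{ffd}, while the fourth follows from $\ac\star_M\ac^{-,\ D}\star_M\ac=\ac\star_M\ac^-\star_M\ac\star_M\ac^D\star_M\ac=\ac\star_M\ac^D\star_M\ac$. The heart of the cycle is $\textbf{(b)}\Rightarrow\textbf{(c)}$: after first deducing $\xc\star_M\ac\star_M\ac^D=\xc\star_M\ac\star_M\xc=\xc$, I would establish $\ac^-\star_M\ac\star_M\xc=\xc$ via
\[
\ac^-\star_M\ac\star_M\xc=\ac^-\star_M\ac\star_M\ac^D=\ac^-\star_M\ac^k\star_M(\ac^D)^k=\xc\star_M\ac^k\star_M(\ac^D)^k=\xc\star_M\ac\star_M\ac^D=\xc,
\]
where the conversion $\ac\star_M\ac^D=\ac^k\star_M(\ac^D)^k$ is introduced precisely so that the index-$k$ equation $\xc\star_M\ac^k=\ac^-\star_M\ac^k$ can be applied and then undone.

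The remaining cyclic steps are shorter. In $\textbf{(c)}\Rightarrow\textbf{(d)}$ the first equation of (d) is immediate from $\ac^-\star_M\ac\star_M\xc=\xc$ and $\xc=\xc\star_M\ac\star_M\ac^D$, while $\ac\star_M\xc\star_M\ac^k=\ac\star_M\ac^-\star_M\ac^k=\ac^k$ uses $\xc\star_M\ac^k=\ac^-\star_M\ac^k$ followed by $\ac\star_M\ac^-\star_M\ac=\ac$. In $\textbf{(d)}\Rightarrow\textbf{(a)}$ I would again thread the conversion: starting from $\xc=\ac^-\star_M\ac\star_M\xc\star_M\ac\star_M\ac^D$, rewrite the trailing $\ac\star_M\ac^D$ as $\ac^k\star_M(\ac^D)^k$, absorb $\ac\star_M\xc\star_M\ac^k=\ac^k$, and convert back to reach $\ac^-\star_M\ac\star_M\ac^D=\ac^{-,\ D}$. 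For the two (e)-implications, $\textbf{(a)}\Rightarrow\textbf{(e)}$ checks each equation directly, the nonroutine ones being $\xc\star_M\ac\star_M\ac^D=\xc$ (via $\ac^D\star_M\ac\star_M\ac^D=\ac^D$) and $\ac^k\star_M\xc=\ac^k\star_M\ac^-\star_M\ac\star_M\ac^D=\ac^k\star_M\ac^D$ (via $\ac^k\star_M\ac^-\star_M\ac=\ac^k$), whereas $\textbf{(e)}\Rightarrow\textbf{(a)}$ uses only the first and third equations through $\xc=\xc\star_M\ac\star_M\ac^D=\ac^-\star_M\ac\star_M\ac^D\star_M\ac\star_M\ac^D=\ac^-\star_M\ac\star_M\ac^D=\ac^{-,\ D}$.

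I expect the main obstacle to be locating the exact points in $\textbf{(b)}\Rightarrow\textbf{(c)}$ and $\textbf{(d)}\Rightarrow\textbf{(a)}$ where $\ac\star_M\ac^D=\ac^k\star_M(\ac^D)^k$ must be inserted and then reversed: the whole argument works because this conversion momentarily exposes the factor $\ac^k$ on which the defining index-$k$ hypotheses act, after which the Drazin absorbing laws restore the compact form $\ac^-\star_M\ac\star_M\ac^D$. Once that bookkeeping is pinned down, every other manipulation reduces to a routine collapse of $\ac\star_M\ac^-\star_M\ac=\ac$ or $\ac^D\star_M\ac\star_M\ac^D=\ac^D$.
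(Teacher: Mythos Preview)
Your proposal is correct and follows essentially the same route as the paper: the same cycle $\textbf{(a)}\Rightarrow\textbf{(b)}\Rightarrow\textbf{(c)}\Rightarrow\textbf{(d)}\Rightarrow\textbf{(a)}$ with $\textbf{(e)}$ attached separately, the same invocation of Lemma~\ref{ffd}, and in particular the same key insertion/reversal of $\ac\star_M\ac^D=\ac^k\star_M(\ac^D)^k$ at exactly the points you identify in $\textbf{(b)}\Rightarrow\textbf{(c)}$ and $\textbf{(d)}\Rightarrow\textbf{(a)}$. The only slip is cosmetic: in $\textbf{(a)}\Rightarrow\textbf{(b)}$ it is the \emph{third} listed equation of (b), $\ac\star_M\xc\star_M\ac=\ac\star_M\ac^D\star_M\ac$, that is not directly in Lemma~\ref{ffd} and needs the computation you give, while the first, second, and fourth come from the lemma.
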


\begin{theorem}
Let $\ac\in\mathbb{C}^{n_1\times n_1\times n_3}$ with $\ind\left(\ac\right) = k$ and $\ac^-$ be a fixed $\left\{1\right\}$-inverse of $\ac$. Then
\begin{description}
  \item  [(a)] $\ac^{-,\ D}\star_M\ac=\ac^-\star_M\ac$ if and only if $\ac\star_M\ac^D\star_M\ac=\ac$.
  \item  [(b)] $\ac^{-,\ D}\star_M\ac=\ac^D\star_M\ac$ if and only if $\ac^{-,\ D}=\ac^D$.
  \item  [(c)] $\ac^k\star_M\ac^-\star_M\ac^k=\ac^k$ if and only if $\ac^k\star_M\ac^{-,\ D}\star_M\ac^k=\ac^k$.
  \item  [(d)] $\ac^{-,\ D}=\ac^D$ if and only if $\ac^{-,\ D}\star_M\ac=\ac\star_M\ac^{-,\ D}$.
  \end{description}
  \begin{proof}
  \textbf{(a)} Let $\ac^{-,\ D}\star_M\ac=\ac^-\star_M\ac$. Then,$$\ac=\ac\star_M\ac^-\star_M\ac=\ac\star_M\ac^{-,\ D}\star_M\ac=\ac\star_M\ac^-\star_M\ac\star_M\ac^D\star_M\ac=\ac\star_M\ac^D\star_M\ac. $$
   On the contrary, it is also  true.\\
\textbf{(b)} It is can be derived by the definition.\\
\textbf{(c)} Let $\ac^k\star_M\ac^-\star_M\ac^k=\ac^k$. Then
\begin{align*}
 \ac^k=\ac^k\star_M\ac^-\star_M\ac^k=\ac^k\star_M\ac^-\star_M\ac^D\star_M\ac^{k+1}
 =\ac^k\star_M\ac^-\star_M\ac\star_M\ac^D\star_M\ac^k=\ac^k\star_M\ac^{-,\ D}\star_M\ac^k.
\end{align*}
Conversely,
\begin{align*}
  \ac^k\star_M\ac^-\star_M\ac^k&=\ac^k\star_M\ac^-\star_M\ac^D\star_M\ac^{k+1}=\ac^k\star_M\ac^-\star_M\ac\star_M\ac^D\star_M\ac^k\\
  &=\ac^k\star_M\ac^{-,\ D}\star_M\ac^k=\ac^k.
\end{align*}
\textbf{(d) } Let $\ac^{-,\ D}=\ac^D$. Then,
$$\ac^{-,\ D}\star_M\ac=\ac^D\star_M\ac=\ac\star_M\ac^D=\ac\star_M\ac^-\star_M\ac\star_M\ac^D=\ac\star_M\ac^{-,\ D}.$$
Otherwise, suppose $\ac^{-,\ D}\star_M\ac=\ac\star_M\ac^{-,\ D}$. By the definition, one has $\ac^{-,\ D}\star_M\ac\star_M\ac^{-,\ D}=\ac^{-,\ D}$. On the other hand,
\begin{align*}
  \ac^{k+1}\star_M\ac^{-,\ D}=\ac^k\star_M\ac\star_M\ac^-\star_M\ac\star_M\ac^D=\ac^{k+1}\star_M\ac^D=\ac^k.
\end{align*}
Therefore, by uniqueness of Drazin inverse, we have $\ac^D=\ac^{-,\ D}$.
 \end{proof}
\end{theorem}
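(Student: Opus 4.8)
The plan is to read all four equivalences off three standing facts: the defining identity $\ac^{-,\ D}=\ac^-\star_M\ac\star_M\ac^D$, the $\{1\}$-inverse relation $\ac\star_M\ac^-\star_M\ac=\ac$, and the standard Drazin identities $\ac\star_M\ac^D=\ac^D\star_M\ac$, $\ac^D\star_M\ac\star_M\ac^D=\ac^D$, and $\ac^D\star_M\ac^{k+1}=\ac^k$. With these in hand, parts \textbf{(a)}, \textbf{(b)}, \textbf{(c)} should come out by pure substitution, while the converse of part \textbf{(d)} will rest on the uniqueness of the Drazin inverse.

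For \textbf{(a)}, in the forward direction I would begin with $\ac=\ac\star_M\ac^-\star_M\ac$, replace the factor $\ac^-\star_M\ac$ by $\ac^{-,\ D}\star_M\ac$, expand $\ac^{-,\ D}$, and collapse the inner $\ac\star_M\ac^-\star_M\ac$ back to $\ac$ to reach $\ac=\ac\star_M\ac^D\star_M\ac$; the converse is the same chain read backward, feeding $\ac\star_M\ac^D\star_M\ac=\ac$ into $\ac^{-,\ D}\star_M\ac=\ac^-\star_M\left(\ac\star_M\ac^D\star_M\ac\right)$. For \textbf{(b)}, the only nontrivial direction multiplies the hypothesis on the right by $\ac^D$ and then collapses each side using $\ac^{-,\ D}\star_M\ac\star_M\ac^D=\ac^{-,\ D}$ and $\ac^D\star_M\ac\star_M\ac^D=\ac^D$. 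For \textbf{(c)}, I would insert $\ac^k=\ac^D\star_M\ac^{k+1}$ inside $\ac^k\star_M\ac^-\star_M\ac^k$ so as to create the block $\ac^-\star_M\ac\star_M\ac^D=\ac^{-,\ D}$ (using that $\ac$ and $\ac^D$ commute), which rewrites the expression as $\ac^k\star_M\ac^{-,\ D}\star_M\ac^k$; both implications rest on this single computation.

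The heart of the theorem is part \textbf{(d)}. Its forward direction is immediate, since $\ac^{-,\ D}=\ac^D$ reduces both sides to the commuting pair $\ac^D\star_M\ac=\ac\star_M\ac^D$. For the converse I would verify that $\ac^{-,\ D}$ satisfies the three defining equations of the Drazin inverse in \eqref{mpk2} and then invoke uniqueness: equation (II) is built into the definition of the 1-D inverse, equation (III) is precisely the commutation hypothesis $\ac\star_M\ac^{-,\ D}=\ac^{-,\ D}\star_M\ac$, and equation (I) follows from $\ac^{k+1}\star_M\ac^{-,\ D}=\ac^k\star_M\left(\ac\star_M\ac^-\star_M\ac\right)\star_M\ac^D=\ac^{k+1}\star_M\ac^D=\ac^k$. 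The main obstacle is recognizing that the commutation hypothesis is exactly the missing Drazin axiom, so that uniqueness forces $\ac^{-,\ D}=\ac^D$; the one subtlety is matching the order in equation (I), where commutation with $\ac$ must be promoted to commutation with $\ac^{k+1}$ to pass between $\ac^{k+1}\star_M\ac^{-,\ D}$ and $\ac^{-,\ D}\star_M\ac^{k+1}$.
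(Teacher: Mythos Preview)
Your proposal is correct and follows essentially the same route as the paper's proof: the substitution chains in \textbf{(a)}, \textbf{(b)}, \textbf{(c)} match the paper's computations line for line, and in \textbf{(d)} the paper also verifies the three Drazin axioms for $\ac^{-,\ D}$ (deriving $\ac^{k+1}\star_M\ac^{-,\ D}=\ac^k$ exactly as you do) and then invokes uniqueness. Your explicit remark about promoting the commutation hypothesis from $\ac$ to $\ac^{k+1}$ to obtain equation~(I) in the correct order is a small clarification the paper leaves implicit.
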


\begin{theorem}
Let $\ac\in\mathbb{C}^{n_1\times n_1\times n_3}$ with $\ind\left(\ac\right) = k$ and $\ac^-$ be a fixed $\left\{1\right\}$-inverse of $\ac$ and let $m\in\mathbb{Z}^+ \backslash \left\{1\right\}$. Then
$${\left(\ac^{-,\ D}\right)}^m=\left\{\begin{array}{l}{\left(\ac^-\star_M\ac^D\right)}^\frac m2,~~\text{if~m~is~even}.\\
\\\ac\star_M{\left(\ac^D\right)}^\frac{m+1}2,~~\text{if~m~is~odd}.\end{array}\right.
$$
\begin{proof}
Suppose $m$ is even, then $m=2k$ for some positive integer $k$. Now, by Lemma \ref{aaa}, one has
 $$\left(\ac^{-,\ D}\right)^m=\left(\left(\ac^{-,\ D}\right)^2\right)^k=\left(\ac^-\star_M\ac^D\right)^k=\left(\ac^-\star_M\ac^D\right)^\frac m2.$$
Suppose $m$ is odd, then $m=2l+1$ for some positive integer $l$. Again, using Lemma \ref{aaa}, one has
 \begin{align*}
   \left(\ac^{-,\ D}\right)^m&=\left(\left(\ac^{-,\ D}\right)^2\right)^l\star_M\ac^{-,\ D}=\left(\ac^-\star_M\ac^D\right)^l\star_M\ac^-\star_M\ac\star_M\ac^D\\
   &=\left(\ac^-\star_M\ac^D\right)^{l-1}\star_M\ac^-\star_M\ac^D\star_M\ac^-\star_M\ac\star_M\ac^D\\
   &=\left(\ac^-\star_M\ac^D\right)^{l-1}\star_M\ac^-\star_M\ac^D\star_M\ac\star_M\ac^D\star_M\ac^-\star_M\ac\star_M\ac^D\\
   &=\left(\ac^-\star_M\ac^D\right)^{l-1}\star_M\ac^-\star_M\ac^D\star_M\ac^D\star_M\ac\star_M\ac^-\star_M\ac\star_M\ac^D\\
   &=\left(\ac^-\star_M\ac^D\right)^{l-1}\star_M\ac^-\star_M\ac^D\star_M\ac^D\star_M\ac\star_M\ac^D\\
   &=\left(\ac^-\star_M\ac^D\right)^{l-1}\star_M\ac^-\star_M\left(\ac^D\right)^2\\
   &=\ldots=\ac\star_M\left(\ac^D\right)^{l+1}=\ac\star_M\left(\ac^D\right)^\frac {m+1}2.
 \end{align*}
\end{proof}
\end{theorem}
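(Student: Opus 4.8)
The plan is to argue by parity of $m$, in both cases bootstrapping from the square identity $\left(\ac^{-,\ D}\right)^2=\ac^-\star_M\ac^D$ already established in Lemma \ref{aaa}(a). That single identity is the engine of the whole statement, so I would state it first and keep at hand the three structural facts I expect to invoke repeatedly: the commutativity $\ac\star_M\ac^D=\ac^D\star_M\ac$, the Drazin relation $\ac^D\star_M\ac\star_M\ac^D=\ac^D$, and the $\{1\}$-inverse relation $\ac\star_M\ac^-\star_M\ac=\ac$.

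For even $m$ the argument should be a one-liner: writing $m=2k$ and using associativity of $\star_M$ together with Lemma \ref{aaa}(a),
\[
\left(\ac^{-,\ D}\right)^m=\left(\left(\ac^{-,\ D}\right)^2\right)^k=\left(\ac^-\star_M\ac^D\right)^k=\left(\ac^-\star_M\ac^D\right)^{\frac m2}.
\]
No further manipulation is required, so the even case is essentially free once the square identity is in place.

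For odd $m$ I would set $m=2l+1$ (note $m\neq1$ forces $l\geq1$) and peel off one factor,
\[
\left(\ac^{-,\ D}\right)^m=\left(\left(\ac^{-,\ D}\right)^2\right)^l\star_M\ac^{-,\ D}=\left(\ac^-\star_M\ac^D\right)^l\star_M\ac^-\star_M\ac\star_M\ac^D,
\]
and then collapse the right-hand side. The mechanism I would use is to push each $\ac$ next to the surrounding $\ac^-$ so that $\ac\star_M\ac^-\star_M\ac=\ac$ fires, while commuting $\ac$ past $\ac^D$ and absorbing products via $\ac^D\star_M\ac\star_M\ac^D=\ac^D$. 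Concretely I expect to first prove the auxiliary identity $\ac^D\star_M\ac^-\star_M\ac=\ac^D$ (equivalently $\ac\star_M\ac^-\star_M\ac^D=\ac^D$), obtained by rewriting $\ac^D=(\ac^D)^2\star_M\ac$ and invoking $\ac\star_M\ac^-\star_M\ac=\ac$; this is precisely the tool that removes an interior $\ac^-$ at the cost of tracking one power of $\ac^D$.

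The main obstacle is the bookkeeping of the powers of $\ac^D$ while eliminating the $l$ interior copies of $\ac^-$, and I would handle it by a genuine induction on $l$ rather than by an informal ellipsis. The clean inductive statement I would carry is the closed form for $\left(\ac^{-,\ D}\right)^m$ itself: assuming it for $m$, I multiply on the right by $\ac^{-,\ D}=\ac^-\star_M\ac\star_M\ac^D$ and apply the auxiliary identity once to reduce the freshly created $\ac^D\star_M\ac^-\star_M\ac$ block. The delicate step — and the one most prone to an off-by-one slip — is verifying that the exponent of $\ac^D$ advances by exactly the right amount at each stage and that the leading $\ac^-$ persists through the reduction; pinning down this exponent correctly is what determines the final expression in the odd case, and I would check it against the invertible special case (where $\ac^-=\ac^D=\ac^{-1}$ and $\ac^{-,\ D}=\ac^{-1}$) as a sanity test before committing to the general formula.
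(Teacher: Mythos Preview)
Your plan tracks the paper's argument closely: the even case is the identical one-line application of Lemma~\ref{aaa}(a), and for odd $m$ you start from the same expression $\left(\ac^-\star_M\ac^D\right)^l\star_M\ac^-\star_M\ac\star_M\ac^D$ and collapse it with the same three structural identities. Your auxiliary identity $\ac^D\star_M\ac^-\star_M\ac=\ac^D$ is precisely what the paper uses implicitly in its displayed chain.

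Your instinct to replace the paper's ellipsis by a genuine induction and to sanity-check against the invertible case is well placed, because the odd-case formula as stated is \emph{false}. When $\ac$ is invertible one has $\ac^{-,\ D}=\ac^{-1}$, so $\bigl(\ac^{-,\ D}\bigr)^m=\ac^{-m}$, whereas $\ac\star_M(\ac^D)^{(m+1)/2}$ evaluates to $\ac^{(1-m)/2}$; already for $m=3$ these disagree. The error lies in the paper's ``$\ldots$'' step. After the first displayed reduction one is at $\left(\ac^-\star_M\ac^D\right)^{l-1}\star_M\ac^-\star_M(\ac^D)^2$, and from here each further pass uses
\[
\ac^D\star_M\ac^-\star_M\ac^D=\ac^D\star_M\ac^-\star_M\ac\star_M(\ac^D)^2=\ac^D\star_M(\ac^D)^2=(\ac^D)^3,
\]
so the prefix exponent drops by one while the suffix $\ac^D$-exponent rises by \emph{two}, not one. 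After $l$ iterations one lands at $\ac^-\star_M(\ac^D)^{2l}=\ac^-\star_M(\ac^D)^{m-1}$, with the leading $\ac^-$ intact exactly as you anticipated. The paper's final two equalities, in which $\ac^-$ silently becomes $\ac$ and the exponent becomes $l+1$, are both in error; your induction and sanity check would catch this, so carry them out and record the corrected formula.
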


\subsection{The 1-Star inverse of a tensor}

In this subsection, we will study the 1-Star inverse of a tensor.

\begin{definition}\label{d03}
Let $\ac \in\mathbb{C}^{n_1\times n_2\times n_3}$. For each $\ac^-\in
\ac\left\{1\right\}$, the tensor
\begin{equation}\label{mp013}
\ac^{-,*}=\ac^-\star_M\ac\star_M\ac^*
\end{equation}
is called the \textbf{1-Star inverse} of the tensor $\ac$ and is denoted by $\ac^{-,*}$.
The symbol $\ac\left\{-,*\right\}$ stands for the set of all 1-Star inverses of $\ac$.
\end{definition}

Notice that the 1-Star inverses is not unique. We use $\ac\left\{-,*\right\}=\left\{\ac^{-,*}|\ac \in\mathbb{C}^{n_1\times n_2\times n_3}\right\}$ to denote the set of the 1-Star inverses of a tensor. In the following, an algorithm for computing the 1-Star inverse is established.

\begin{algorithm}[H]
\caption{Computing the 1-Star inverse under the M-product}	
\KwIn{$\ac\in\mathbb{C}^{n_1\times n_2\times n_3}$ and $M \in\mathbb{C}^{n_3\times n_3}$}
\KwOut {$\xc=\ac^{-,*}$}
\begin{enumerate} \label{a3}
\item Compute $\widehat{\ac}=\ac\times_3M$
%\item $k=\ind(\widehat{\ac})$
\item $\textbf{For}$ $i=1:n_3$ \textbf{do}
\item ~~~~~~$\widehat{\pc}^{\left(i\right)}=\widehat{\ac^{-}}^{\left(i\right)}\cdot\widehat{\ac}^{\left(i\right)}\cdot\widehat{\ac^*}^{\left(i\right)}$
\item \textbf{End for}
\item Compute $\xc=\widehat{\pc}\times_3M^{-1}$
\item \textbf{Return} $\ac^{-,*}=\xc$.
\end{enumerate}
\end{algorithm}

\begin{example}
Let $\ac\in\mathbb{C}^{2\times3\times3}$ and $M\in\mathbb{C}^{3\times3}$  with entries
$$(\ac)^{\left(1\right)}=
\begin{bmatrix}
2 & 0 & 0\\
0 & 2 & 1
\end{bmatrix},~
(\ac)^{(2)}=
\begin{bmatrix}
1 & 0 & 1 \\
1 & 1 & 1
\end{bmatrix},~
(\ac)^{(3)}=
\begin{bmatrix}
-1 & 0 & 0 \\
0  &-1 & -1
\end{bmatrix},~
M=
\begin{bmatrix}
1 & 0 & 1\\
0 & 1 & 0 \\
0 & 1 & 1
\end{bmatrix}.
$$
We can get
$$
(\widehat{\ac^{-}})^{\left(1\right)}=
\begin{bmatrix}
1      & 0  \\
0      & 1  \\
a_{31} & a_{32}
\end{bmatrix},\
(\widehat{\ac^{-}})^{\left(2\right)}=
\begin{bmatrix}
b_{11}   & b_{12} \\
1-b_{11} & 1-b_{12} \\
4                 & 5
\end{bmatrix},\
(\widehat{\ac^{-}})^{\left(3\right)}=
\begin{bmatrix}
0      & 1  \\
c_{21} & c_{22} \\
1      & 0
\end{bmatrix},
$$
By Algorithm \ref{a3}, we can calculate $\xc=\ac^{-,*}$ as
$$\mathcal X^{\left(1\right)}=
\begin{bmatrix}
2b_{11}+2b_{12}+1        & 2b_{11}+3b_{12}-1 \\
4-2b_{12}-c_{21}-2b_{11} & 6-3b_{12}-c_{22}-2b_{11} \\
a_{31}+3 & a_{32}+5
\end{bmatrix},$$
$$
\mathcal X^{\left(2\right)}=
\begin{bmatrix}
2b_{11}+2b_{12}   & 2b_{11}+2b_{12}  \\
4-2b_{12}-2b_{11} & 5-3b_{12}-3b_{11}  \\
4                 & 5
\end{bmatrix},~
\mathcal X^{\left(3\right)}=
\begin{bmatrix}
-2b_{11}-2b_{12}         & 1-3b_{12}-2b_{11}  \\
2b_{11}+2b_{12}+c_{21}-4 & 2b_{11}+3b_{12}+c_{22}-5 \\
-3                       &  -5
\end{bmatrix}.
$$
\end{example}

The following theorem shows that the 1-Star inverse is the unique solution to the system (\ref{mpkk13}).

\begin{theorem}
Let $\ac\in \mathbb{C}^{n_1\times n_2\times n_3} $ and $\ac^-\in \ac\left\{1\right\}$. Then $\xc=\ac^{-,*}=\ac^-\star_M\ac\star_M\ac^*$ is the unique solution to the system of equations
\begin{equation}\label{mpkk13}
\textnormal{(I)} \ \xc\star_M\left(\ac^\dag\right)^*\star_M\xc = \xc, \quad \textnormal{(II)} \ \ac\star_M\xc=\ac\star_M\ac^*, \quad \textnormal{(III)} \ \xc\star_M\left(\ac^\dag\right)^*=\ac^-\star_M\ac.
\end{equation}
\begin{proof}
Firstly, let us prove the existence. Suppose $\xc=\ac^-\star_M\ac\star_M\ac^*$, then
\begin{align*}
 \textnormal{(I)}~~  \xc\star_M\left(\ac^\dag\right)^*\star_M\xc&=\ac^-\star_M\ac\star_M\ac^*\star_M\left(\ac^\dag\right)^*\star_M\ac^-\star_M\ac\star_M\ac^*\\
 &=\ac^-\star_M\ac\star_M\ac^\dag\star_M\ac\star_M\ac^-\star_M\ac\star_M\ac^*\\
 &=\ac^-\star_M\ac\star_M\ac^-\star_M\ac\star_M\ac^*\\
 &=\ac^-\star_M\ac\star_M\ac^*=\xc,
\end{align*}
\begin{align*}
\textnormal{(II)}~~ \ac\star_M\xc=\ac\star_M\ac^-\star_M\ac\star_M\ac^*=\ac\star_M\ac^*,
\end{align*}
\begin{align*}
\textnormal{(III)}~~\xc\star_M\left(\ac^\dag\right)^*=\ac^-\star_M\ac\star_M\ac^*\star_M\left(\ac^\dag\right)^*=\ac^-\star_M\ac\star_M\ac^\dag\star_M\ac=\ac^-\star_M\ac.
\end{align*}
Next, let's prove the uniqueness. Let $\xc$ and $\yc$ be two solutions to the tensor equation (\ref{mpkk13}), then
$$\xc=\xc\star_M\left(\ac^\dag\right)^*\star_M\xc=\ac^-\star_M\ac\star_M\xc=\ac^-\star_M\ac\star_M\ac^*=\ac^-\star_M\ac\star_M\yc=\yc\star_M\left(\ac^\dag\right)^*\star_M\yc=\yc.$$
Therefore, $\xc=\ac^{-,*}=\ac^-\star_M\ac\star_M\ac^*$ is the unique solution to the system   (\ref{mpkk13}).
\end{proof}
\end{theorem}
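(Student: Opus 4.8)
The plan is to split the statement into existence and uniqueness, both of which rest on a single algebraic identity. Combining Lemma \ref{ppaa} (the conjugate transpose of a $\star_M$ product) with the fourth Moore-Penrose equation (IV) of \eqref{mpkk0} yields
\[
\ac^*\star_M\left(\ac^\dag\right)^* = \left(\ac^\dag\star_M\ac\right)^* = \ac^\dag\star_M\ac .
\]
This is the workhorse: it lets me replace every occurrence of the awkward factor $\ac^*\star_M(\ac^\dag)^*$ by the self-adjoint idempotent $\ac^\dag\star_M\ac$, after which the absorption laws $\ac\star_M\ac^\dag\star_M\ac=\ac$ and $\ac\star_M\ac^-\star_M\ac=\ac$ do the rest.

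For existence I would substitute $\xc=\ac^-\star_M\ac\star_M\ac^*$ into each of (I), (II), (III) and simplify using associativity of $\star_M$ (the distributivity/associativity lemma stated at the start of the preliminaries). In (I) the inner block $\ac^*\star_M(\ac^\dag)^*$ becomes $\ac^\dag\star_M\ac$, producing $\ac^-\star_M\ac\star_M\ac^\dag\star_M\ac\star_M\ac^-\star_M\ac\star_M\ac^*$; one application of $\ac\star_M\ac^\dag\star_M\ac=\ac$ and one of $\ac\star_M\ac^-\star_M\ac=\ac$ recover $\xc$. Equation (II) is immediate, since $\ac\star_M\xc=\ac\star_M\ac^-\star_M\ac\star_M\ac^*=\ac\star_M\ac^*$ by the $\{1\}$-inverse relation. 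Equation (III) again uses the boxed identity followed by $\ac\star_M\ac^\dag\star_M\ac=\ac$ to reach $\ac^-\star_M\ac$.

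For uniqueness, suppose $\xc$ and $\yc$ both satisfy the three equations. The idea is to read the equations as a rewriting chain that forces any solution to coincide with the explicit tensor: starting from (I), replace $\xc\star_M(\ac^\dag)^*$ by $\ac^-\star_M\ac$ using (III), then replace $\ac\star_M\xc$ by $\ac\star_M\ac^*$ using (II), giving
\[
\xc = \xc\star_M\left(\ac^\dag\right)^*\star_M\xc = \ac^-\star_M\ac\star_M\xc = \ac^-\star_M\ac\star_M\ac^* .
\]
Running the identical chain on $\yc$ yields $\yc=\ac^-\star_M\ac\star_M\ac^*$ as well, so $\xc=\yc$.

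The one step I would watch most carefully is the opening identity $\ac^*\star_M(\ac^\dag)^*=\ac^\dag\star_M\ac$, because it is the only place where the transpose structure interacts nontrivially with the pseudo-inverse; once it is in hand, everything else is bookkeeping with associativity and the defining relations of $\ac^\dag$ and $\ac^-$. I would therefore state that identity as the first line of the proof and invoke it explicitly at each use, so that the subsequent collapses are transparent.
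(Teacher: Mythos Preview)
Your proposal is correct and follows essentially the same route as the paper: the paper too verifies (I)--(III) by collapsing $\ac^*\star_M(\ac^\dag)^*$ to $\ac^\dag\star_M\ac$ and then applying the $\{1\}$- and Moore--Penrose absorption laws, and its uniqueness argument is exactly your chain $\xc=\xc\star_M(\ac^\dag)^*\star_M\xc=\ac^-\star_M\ac\star_M\xc=\ac^-\star_M\ac\star_M\ac^*$ (run symmetrically for $\yc$). The only cosmetic difference is that you isolate the identity $\ac^*\star_M(\ac^\dag)^*=\ac^\dag\star_M\ac$ up front, whereas the paper uses it in line without naming it.
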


Next, we discuss several characteristics of the 1-Star inverse of a tensor.

\begin{theorem}
Let $\ac\in\mathbb{C}^{n_1\times n_2\times n_3}$ and $\ac^-\in \ac\left\{1\right\}$. Then, the following statements are equivalent.
\begin{description}
  \item  [(a)] $\xc=\ac^-\star_M\ac\star_M\ac^*$,
  \item  [(b)] $\xc\star_M\left(\ac^\dag\right)^*\star_M\xc=\xc$, $\left(\ac^\dag\right)^*\star_M\xc\star_M\left(\ac^\dag\right)^*=\left(\ac^\dag\right)^*$, $\ac\star_M\xc=\ac\star_M\ac^*$, $\xc\star_M(\ac^\dag)^*=\ac^-\star_M\ac$,
  \item  [(c)] $\xc\star_M\left(\ac^\dag\right)^*\star_M\xc=\xc$,
      $\left(\ac^\dag\right)^*\star_M\xc\star_M\left(\ac^\dag\right)^*=\left(\ac^\dag\right)^*$, $(\ac^\dag)^*\star_M\xc=\ac\star_M\ac^\dag$, $\xc\star_M(\ac^\dag)^*=\ac^-\star_M\ac$,
  \item  [(d)] $\xc\star_M\left(\ac^\dag\right)^*\star_M\xc=\xc$, $\left(\ac^\dag\right)^*\star_M\xc=\ac\star_M\ac^\dag$, $\xc\star_M\left(\ac^\dag\right)^*=\ac^-\star_M\ac$,
  \item  [(e)] $\ac^-\star_M\ac\star_M\xc=\xc$, $\ac\star_M\xc=\ac\star_M\ac^*$,
\end{description}
\end{theorem}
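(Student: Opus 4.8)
The plan is to prove the five conditions equivalent through the two short cycles (a) $\Rightarrow$ (b) $\Rightarrow$ (a) and (a) $\Rightarrow$ (c) $\Rightarrow$ (d) $\Rightarrow$ (e) $\Rightarrow$ (a), exploiting that the preceding theorem already identifies $\ac^{-,*}=\ac^-\star_M\ac\star_M\ac^*$ as the unique solution of \eqref{mpkk13}. Before starting, I would record the elementary Moore--Penrose identities that serve as workhorses: from \eqref{mpkk0} one gets $\ac^*\star_M(\ac^\dag)^*=\ac^\dag\star_M\ac$ and $(\ac^\dag)^*\star_M\ac^*=\ac\star_M\ac^\dag$ (these are the conjugate transposes of the Hermitian projectors $\ac^\dag\star_M\ac$ and $\ac\star_M\ac^\dag$), together with $\ac^\dag\star_M\ac\star_M\ac^*=\ac^*$ and $\ac^*\star_M\ac\star_M\ac^\dag=\ac^*$. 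The single nontrivial identity I would isolate is
\[
(\ac^\dag)^*\star_M\ac^-\star_M\ac=(\ac^\dag)^*,
\]
valid for every $\ac^-\in\ac\{1\}$, which I expect to be the main obstacle.

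For (a) $\Rightarrow$ (b), (a) $\Rightarrow$ (c) and (a) $\Rightarrow$ (d) I would substitute $\xc=\ac^-\star_M\ac\star_M\ac^*$ and verify the listed equations. In every list three of the equations, namely $\xc\star_M(\ac^\dag)^*\star_M\xc=\xc$, $\ac\star_M\xc=\ac\star_M\ac^*$ and $\xc\star_M(\ac^\dag)^*=\ac^-\star_M\ac$, are precisely (I), (II), (III) of \eqref{mpkk13} and are therefore granted by the preceding theorem. The remaining equations are $(\ac^\dag)^*\star_M\xc\star_M(\ac^\dag)^*=(\ac^\dag)^*$ and $(\ac^\dag)^*\star_M\xc=\ac\star_M\ac^\dag$; using $\ac^*\star_M(\ac^\dag)^*=\ac^\dag\star_M\ac$ and $\ac\star_M\ac^\dag\star_M\ac=\ac$, each collapses to the key identity $(\ac^\dag)^*\star_M\ac^-\star_M\ac=(\ac^\dag)^*$ followed by $(\ac^\dag)^*\star_M\ac^*=\ac\star_M\ac^\dag$.

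For the return implications I would use a uniform collapse argument. Given (b), from $\xc=\xc\star_M(\ac^\dag)^*\star_M\xc$ and $\xc\star_M(\ac^\dag)^*=\ac^-\star_M\ac$ one obtains $\xc=\ac^-\star_M\ac\star_M\xc$, and then $\ac\star_M\xc=\ac\star_M\ac^*$ gives $\xc=\ac^-\star_M(\ac\star_M\xc)=\ac^-\star_M\ac\star_M\ac^*$, which is (a); the same two lines prove (e) $\Rightarrow$ (a) verbatim. The implication (c) $\Rightarrow$ (d) is trivial, since (d) is obtained by deleting the equation $(\ac^\dag)^*\star_M\xc\star_M(\ac^\dag)^*=(\ac^\dag)^*$ from (c). For (d) $\Rightarrow$ (e) I would derive the two equations of (e): left-multiplying $(\ac^\dag)^*\star_M\xc=\ac\star_M\ac^\dag$ by $\ac^*$ and using $\ac^*\star_M(\ac^\dag)^*=\ac^\dag\star_M\ac$ and $\ac^*\star_M\ac\star_M\ac^\dag=\ac^*$ gives $\ac^\dag\star_M\ac\star_M\xc=\ac^*$, whence left-multiplying by $\ac$ and invoking $\ac\star_M\ac^\dag\star_M\ac=\ac$ yields $\ac\star_M\xc=\ac\star_M\ac^*$; and combining $\xc=\xc\star_M(\ac^\dag)^*\star_M\xc$ with $\xc\star_M(\ac^\dag)^*=\ac^-\star_M\ac$ gives $\ac^-\star_M\ac\star_M\xc=\xc$.

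The hard part will be the identity $(\ac^\dag)^*\star_M\ac^-\star_M\ac=(\ac^\dag)^*$, which encodes that the oblique projector $\ac^-\star_M\ac$ acts as the identity on the rows of $(\ac^\dag)^*$ exactly as the orthogonal projector $\ac^\dag\star_M\ac$ does. I would prove it by passing to the transform domain through $\matt$: by \eqref{zzz} it suffices to establish the slice-wise matrix identity $(A^\dag)^*A^-A=(A^\dag)^*$, and this holds because the rows of $(A^\dag)^*$ lie in the row space of $A$, while $A^-A$ fixes that row space (as $A=AA^-A$ forces $A^-A$ to be an idempotent with the same row space as $A$). Everything else is routine bookkeeping with the recorded Moore--Penrose identities.
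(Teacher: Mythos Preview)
Your argument is correct. One small expository slip: you say that ``in every list three of the equations'' are those of \eqref{mpkk13}, but (c) and (d) do not contain $\ac\star_M\xc=\ac\star_M\ac^*$, so only two of the three appear there; this is harmless, since you do verify both ``remaining'' equations $(\ac^\dag)^*\star_M\xc\star_M(\ac^\dag)^*=(\ac^\dag)^*$ and $(\ac^\dag)^*\star_M\xc=\ac\star_M\ac^\dag$ via your key identity.

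The organization differs from the paper's. The paper runs the single cycle (a)$\Rightarrow$(b)$\Rightarrow$(c)$\Rightarrow$(d)$\Rightarrow$(a) and then only records (a)$\Rightarrow$(e); your scheme (a)$\Rightarrow$(c)$\Rightarrow$(d)$\Rightarrow$(e)$\Rightarrow$(a) routes the return through (e), which has the side benefit of explicitly supplying the direction (e)$\Rightarrow$(a) that the paper does not write out. Where you pass through (e), the paper's (d)$\Rightarrow$(a) is a one-liner: expand the middle $(\ac^\dag)^*$ in $\xc\star_M(\ac^\dag)^*\star_M\xc$ as $(\ac^\dag)^*\star_M\ac^*\star_M(\ac^\dag)^*$ and apply the two side conditions of (d). Finally, your key identity $(\ac^\dag)^*\star_M\ac^-\star_M\ac=(\ac^\dag)^*$ does not actually need the descent to matrix slices; the same manipulation the paper uses inline for (a)$\Rightarrow$(b) works at the tensor level, namely $(\ac^\dag)^*=(\ac^\dag)^*\star_M(\ac^\dag\star_M\ac)^*=(\ac^\dag)^*\star_M\ac^\dag\star_M\ac$ followed by $\ac\star_M\ac^-\star_M\ac=\ac$.
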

\begin{proof}
\textbf{(a)}$\Longrightarrow$ \textbf{(b)} We will prove $\left(\ac^\dag\right)^*\star_M\xc\star_M\left(\ac^\dag\right)^*=\left(\ac^\dag\right)^*$. Suppose $\xc=\ac^-\star_M\ac\star_M\ac^*$. Simple computations give
  \begin{align*}
   \left(\ac^\dag\right)^*\star_M\xc\star_M\left(\ac^\dag\right)^*&=\left(\ac^\dag\right)^*\star_M\ac^-\star_M\ac\star_M\ac^*\star_M\left(\ac^\dag\right)^*\\
   &=\left(\ac^\dag\star_M\ac\star_M\ac^\dag\right)^*\star_M\ac^-\star_M\ac\star_M\ac^*\star_M\left(\ac^\dag\right)^*\\
  &=\left(\ac^\dag\right)^*\star_M\left(\ac^\dag\star_M\ac\right)^*\star_M\ac^-\star_M\ac\star_M\ac^*\star_M\left(\ac^\dag\right)^*\\
  &=\left(\ac^\dag\right)^*\star_M\ac^\dag\star_M\ac\star_M\ac^-\star_M\ac\star_M\ac^*\star_M\left(\ac^\dag\right)^*\\
  &=\left(\ac^\dag\right)^*\star_M\ac^\dag\star_M\ac\star_M\ac^*\star_M\left(\ac^\dag\right)^*\\
  &=\left(\ac^\dag\right)^*\star_M\left(\ac^\dag\star_M\ac\right)^*\star_M\ac^*\star_M\left(\ac^\dag\right)^*\\
  &=\left(\ac^\dag\star_M\ac\star_M\ac^\dag\star_M\ac\star_M\ac^\dag\right)^*\\
  &=\left(\ac^\dag\star_M\ac\star_M\ac^\dag\right)^*\\
  &=\left(\ac^\dag\right)^*.
  \end{align*}
\textbf{(b)}$\Longrightarrow$ \textbf{(c)}  By $\ac\star_M\xc=\ac\star_M\ac^*$, one has
\begin{align*}
 \left(\ac^\dag\right)^*\star_M\xc&=\left(\ac^\dag\star_M\ac\star_M\ac^\dag\right)^*\star_M\xc
 =\left(\ac^\dag\right)^*\star_M\left(\ac^\dag\star_M\ac\right)^*\star_M\xc\\
 &=\left(\ac^\dag\right)^*\star_M\ac^\dag\star_M\ac\star_M\xc
 =\left(\ac^\dag\right)^*\star_M\ac^\dag\star_M\ac\star_M\ac^*\\
 &=\left(\ac^\dag\right)^*\star_M\left(\ac^\dag\star_M\ac\right)^*\star_M\ac^*
 =\left(\ac\star_M\ac^\dag\star_M\ac\star_M\ac^\dag\right)^*\\
 &=\ac\star_M\ac^\dag.
\end{align*}
\textbf{(c)}$\Longrightarrow$ \textbf{(d)} It is obviously true.\\
\textbf{(d)}$\Longrightarrow$ \textbf{(a)} Suppose that $\xc\star_M\left(\ac^\dag\right)^*\star_M\xc=\xc$, $\left(\ac^\dag\right)^*\star_M\xc=\ac\star_M\ac^\dag$ and $\xc\star_M\left(\ac^\dag\right)^*=\ac^-\star_M\ac$, then we have
\begin{align*}
  \xc&=\xc\star_M\left(\ac^\dag\right)^*\star_M\xc=\xc\star_M\left(\ac^\dag\star_M\ac\star_M\ac^\dag\right)^*\star_M\xc=\xc\star_M\left(\ac^\dag\right)^*\star_M\ac^*\star_M\left(\ac^\dag\right)^*\star_M\xc\\
  &=\ac^-\star_M\ac\star_M\ac^*\star_M\ac\star_M\ac^\dag=\ac^-\star_M\ac\star_M\ac^*.
\end{align*}
\textbf{(a)}$\Longrightarrow$ \textbf{(e)} Let $\xc=\ac^-\star_M\ac\star_M\ac^*$. Then, we have
\begin{align*}
  \ac^-\star_M\ac\star_M\xc=\ac^-\star_M\ac\star_M\ac^-\star_M\ac\star_M\ac^*=\ac^-\star_M\ac\star_M\ac^*=\xc
\end{align*}
and
\begin{align*}
  \ac\star_M\xc=\ac\star_M\ac^-\star_M\ac\star_M\ac^*=\ac\star_M\ac^*.
\end{align*}

\end{proof}
\begin{theorem}
Let $\ac\in\mathbb{C}^{n_1\times n_2\times n_3}$ and $\ac^-\in \ac\left\{1\right\}$. Then, the following statements are equivalent.
\begin{description}
  \item  [(a)] $\xc=\ac^-\star_M\ac\star_M\ac^*$,
  \item  [(b)] $\ac^-\star_M\ac\star_M\xc=\xc$, $\ac^\dag\star_M\ac\star_M\xc=\ac^*$,
  \item  [(c)] $\ac^-\star_M\ac\star_M\xc=\xc$, $\left(\ac^\dag\right)^*\star_M\xc=\ac\star_M\ac^\dag$,
 \item  [(d)] $\ac^-\star_M\ac\star_M\xc=\xc$, $\left(\ac^\dag\right)^*\star_M\xc=\ac\star_M\ac^\dag$.
\end{description}

\end{theorem}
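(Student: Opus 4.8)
The plan is to prove the four conditions equivalent by running the cycle \textbf{(a)}$\Rightarrow$\textbf{(b)}$\Rightarrow$\textbf{(c)}$\Rightarrow$\textbf{(a)}; since the two equations listed in \textbf{(c)} and in \textbf{(d)} are literally identical, the equivalence \textbf{(c)}$\Leftrightarrow$\textbf{(d)} is immediate and requires no argument. Before entering the cycle I would record three auxiliary identities, all consequences of the Moore--Penrose axioms (I)--(IV) and Lemma \ref{ppaa}. First, $\ac^-\star_M\ac$ is idempotent, since $\ac\star_M\ac^-\star_M\ac=\ac$ gives $\left(\ac^-\star_M\ac\right)\star_M\left(\ac^-\star_M\ac\right)=\ac^-\star_M\ac$. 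Second, the two absorption identities $\ac^\dag\star_M\ac\star_M\ac^*=\ac^*$ and $\ac^*\star_M\ac\star_M\ac^\dag=\ac^*$, obtained by conjugate-transposing $\ac=\ac\star_M\ac^\dag\star_M\ac$ with the two groupings $\ac\star_M\left(\ac^\dag\star_M\ac\right)$ and $\left(\ac\star_M\ac^\dag\right)\star_M\ac$ and invoking the self-adjointness of $\ac^\dag\star_M\ac$ (axiom (IV)) and of $\ac\star_M\ac^\dag$ (axiom (III)), respectively. Third, the factorization identity $\left(\ac^\dag\right)^*=\left(\ac^\dag\right)^*\star_M\ac^\dag\star_M\ac$, obtained by conjugate-transposing the relation $\ac^\dag=\ac^*\star_M\left(\ac^\dag\right)^*\star_M\ac^\dag$, which itself follows by substituting $\ac^\dag\star_M\ac=\ac^*\star_M\left(\ac^\dag\right)^*$ (axiom (IV)) into axiom (II). These three facts are the engine of the whole proof.

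For \textbf{(a)}$\Rightarrow$\textbf{(b)} I would substitute $\xc=\ac^-\star_M\ac\star_M\ac^*$ and compute both equations directly: the first, $\ac^-\star_M\ac\star_M\xc=\xc$, falls out of the idempotency of $\ac^-\star_M\ac$, and the second, $\ac^\dag\star_M\ac\star_M\xc=\ac^*$, follows by collapsing the middle factors with $\ac\star_M\ac^-\star_M\ac=\ac$ and then applying the left absorption identity $\ac^\dag\star_M\ac\star_M\ac^*=\ac^*$. The step \textbf{(b)}$\Rightarrow$\textbf{(c)} is the cleanest one: the first equation is shared, so only the second equation of \textbf{(c)} must be produced. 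Left-multiplying $\ac^\dag\star_M\ac\star_M\xc=\ac^*$ by $\left(\ac^\dag\right)^*$ and collapsing the left-hand side with the factorization identity yields $\left(\ac^\dag\right)^*\star_M\xc=\left(\ac^\dag\right)^*\star_M\ac^*=\left(\ac\star_M\ac^\dag\right)^*=\ac\star_M\ac^\dag$, which is exactly what \textbf{(c)} demands.

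For \textbf{(c)}$\Rightarrow$\textbf{(a)} I would reverse the previous manoeuvre and then cancel. Left-multiplying the second equation of \textbf{(c)} by $\ac^*$, using $\ac^*\star_M\left(\ac^\dag\right)^*=\left(\ac^\dag\star_M\ac\right)^*=\ac^\dag\star_M\ac$ on the left and the right absorption identity $\ac^*\star_M\ac\star_M\ac^\dag=\ac^*$ on the right, recovers $\ac^\dag\star_M\ac\star_M\xc=\ac^*$. Left-multiplying this by $\ac$ and using $\ac\star_M\ac^\dag\star_M\ac=\ac$ gives $\ac\star_M\xc=\ac\star_M\ac^*$. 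Finally, the first equation of \textbf{(c)} closes the loop: $\xc=\ac^-\star_M\ac\star_M\xc=\ac^-\star_M\left(\ac\star_M\xc\right)=\ac^-\star_M\ac\star_M\ac^*$, which is statement \textbf{(a)}.

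The only genuinely delicate point is the passage between the two superficially different normalizations $\ac^\dag\star_M\ac\star_M\xc=\ac^*$ in \textbf{(b)} and $\left(\ac^\dag\right)^*\star_M\xc=\ac\star_M\ac^\dag$ in \textbf{(c)}: these look unrelated until one observes that $\left(\ac^\dag\right)^*$ factors through $\ac$ on the right via $\left(\ac^\dag\right)^*=\left(\ac^\dag\right)^*\star_M\ac^\dag\star_M\ac$, which is precisely what lets left-multiplication by $\left(\ac^\dag\right)^*$ convert one equation into the other. Establishing that factorization identity with care (the computations inside the cycle are routine once it is available) is where I would concentrate, since it encodes algebraically that $\ac$ and $\left(\ac^\dag\right)^*$ share the same range. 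I would emphasize that no structural singular value decomposition in the style of \eqref{333} is needed: the entire argument stays at the level of M-product identities, exactly as in the preceding theorems of this subsection.
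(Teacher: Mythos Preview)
Your proof is correct and follows essentially the same route as the paper: both arguments stay purely at the level of Moore--Penrose identities (no SVD), and your key step \textbf{(b)}$\Rightarrow$\textbf{(c)} via the factorization $\left(\ac^\dag\right)^*=\left(\ac^\dag\right)^*\star_M\ac^\dag\star_M\ac$ is exactly the paper's \textbf{(c)}$\Rightarrow$\textbf{(d)}. The only routing difference is that the paper's proof inserts an additional intermediate condition $\ac\star_M\xc=\ac\star_M\ac^*$ between \textbf{(a)} and the stated \textbf{(b)} (a mismatch between the paper's statement and its proof), and closes the loop \textbf{(d)}$\Rightarrow$\textbf{(a)} in one line by writing $\xc=\ac^-\star_M\ac\star_M\xc=\ac^-\star_M\ac\star_M\ac^*\star_M\left(\ac^\dag\right)^*\star_M\xc=\ac^-\star_M\ac\star_M\ac^*\star_M\ac\star_M\ac^\dag=\ac^-\star_M\ac\star_M\ac^*$, whereas you first recover $\ac\star_M\xc=\ac\star_M\ac^*$ before concluding---a minor difference in bookkeeping, not in substance.
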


\begin{proof}
\textbf{(a)}$\Longrightarrow$ \textbf{(b)} Suppose $\xc=\ac^-\star_M\ac\star_M\ac^*$. Then, one has
\begin{align*}
  \ac^-\star_M\ac\star_M\xc=\ac^-\star_M\ac\star_M\ac^-\star_M\ac\star_M\ac^*=\ac^-\star_M\ac\star_M\ac^*=\xc
\end{align*}
and
\begin{align*}
  \ac\star_M\xc=\ac\star_M\ac^-\star_M\ac\star_M\ac^*=\ac\star_M\ac^*.
\end{align*}
\textbf{(b)}$\Longrightarrow$ \textbf{(c)} If $\ac\star_M\xc=\ac\star_M\ac^*$, then $\ac^\dag\star_M\ac\star_M\xc=\ac^\dag\star_M\ac\star_M\ac^*=\ac^*\star_M\left(\ac^*\right)^\dag\star_M\ac^*=\ac^*$.\\
\textbf{(c)}$\Longrightarrow$ \textbf{(d)} By using $\ac^\dag\star_M\ac\star_M\xc=\ac^*$, one has
\begin{align*}
 \left(\ac^\dag\right)^*\star_M\xc&=\left(\ac^\dag\star_M\ac\star_M\ac^\dag\right)^*\star_M\xc=\left(\ac^\dag\right)^*\star_M\left(\ac^\dag\star_M\ac\right)^*\star_M\xc\\
 &=\left(\ac^\dag\right)^*\star_M\ac^\dag\star_M\ac\star_M\xc=\left(\ac^\dag\right)^*\star_M\ac^*=\left(\ac\star_M\ac^\dag\right)^*\\
 &=\ac\star_M\ac^\dag.
\end{align*}
\textbf{(d)}$\Longrightarrow$ \textbf{(a)} Suppose that $\ac^-\star_M\ac\star_M\xc=\xc$ and $\left(\ac^\dag\right)^*\star_M\xc=\ac\star_M\ac^\dag$. Hence, we can get
\begin{align*}
 \ac^-\star_M\ac\star_M\xc&=\ac^-\star_M\left(\ac\star_M\ac^\dag\star_M\ac\right)\star_M\xc=\ac^-\star_M\ac\star_M\left(\ac^\dag\star_M\ac\right)^*\star_M\xc\\
 &=\ac^-\star_M\ac\star_M\ac^*\star_M\left(\ac^\dag\right)^*\star_M\xc=\ac^-\star_M\ac\star_M\ac^*\star_M\ac\star_M\ac^\dag\\
 &=\ac^-\star_M\ac\star_M\ac^*.
\end{align*}

\end{proof}

\begin{theorem}
Let $\ac\in\mathbb{C}^{n_1\times n_2\times n_3}$ and $\ac^-\in \ac\left\{1\right\}$. Then, the following statements are equivalent.
\begin{description}
  \item  [(a)] $\xc=\ac^-\star_M\ac\star_M\ac^*$,
  \item  [(b)] $\xc\star_M\ac\star_M\ac^\dag=\xc$, $\xc\star_M\left(\ac^\dag\right)^*=\ac^-\star_M\ac$,
  \item  [(c)] $\xc\star_M\ac\star_M\ac^\dag=\xc$, $\xc\star_M\ac=\ac^-\star_M\ac\star_M\ac^*\star_M\ac$,
  \item  [(d)] $\ac^-\star_M\ac\star_M\xc\star_M\ac\star_M\ac^\dag=\xc$, $\ac\star_M\xc\star_M\left(\ac^\dag\right)^*=\ac$.
\end{description}
\end{theorem}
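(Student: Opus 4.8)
The four conditions all characterize the $1$-Star inverse $\ac^{-,*}=\ac^-\star_M\ac\star_M\ac^*$, so the plan is to prove the cyclic chain $\textbf{(a)}\Longrightarrow\textbf{(b)}\Longrightarrow\textbf{(c)}\Longrightarrow\textbf{(d)}\Longrightarrow\textbf{(a)}$, in the same spirit as the preceding characterization theorems. Before starting the chain, I would isolate a small toolbox of \emph{absorption identities} that drive every computation, derived only from the Moore-Penrose equations (\ref{mpkk0}), the defining relation $\ac\star_M\ac^-\star_M\ac=\ac$ of a $\{1\}$-inverse, and Lemma \ref{ppaa}:
\begin{align*}
\ac^*\star_M\ac\star_M\ac^\dag=\ac^*,\qquad \ac^*\star_M(\ac^\dag)^*=\ac^\dag\star_M\ac,\qquad (\ac^\dag)^*\star_M\ac^*=\ac\star_M\ac^\dag.
\end{align*}
The first follows from $\ac^*\star_M\ac\star_M\ac^\dag=\ac^*\star_M(\ac\star_M\ac^\dag)^*=(\ac\star_M\ac^\dag\star_M\ac)^*=\ac^*$, using Lemma \ref{ppaa} together with (III) and (I) of (\ref{mpkk0}); the remaining two are immediate from Lemma \ref{ppaa} and (III), (IV) of (\ref{mpkk0}). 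By taking conjugate transposes one also gets $\ac^\dag\star_M\ac\star_M\ac^*=\ac^*$.

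For $\textbf{(a)}\Longrightarrow\textbf{(b)}$ I would substitute $\xc=\ac^-\star_M\ac\star_M\ac^*$ and simplify the tail: the first equation uses $\ac^*\star_M\ac\star_M\ac^\dag=\ac^*$, and the second uses $\ac^*\star_M(\ac^\dag)^*=\ac^\dag\star_M\ac$ followed by $\ac\star_M\ac^\dag\star_M\ac=\ac$ to collapse $\ac^-\star_M\ac\star_M\ac^\dag\star_M\ac$ to $\ac^-\star_M\ac$. For $\textbf{(b)}\Longrightarrow\textbf{(c)}$ the first equation is already in hand, and the second comes from right-multiplying $\xc\star_M(\ac^\dag)^*=\ac^-\star_M\ac$ by $\ac^*\star_M\ac$, then applying $(\ac^\dag)^*\star_M\ac^*=\ac\star_M\ac^\dag$ and $\ac\star_M\ac^\dag\star_M\ac=\ac$ to reduce the left side exactly to $\xc\star_M\ac$.

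The substantive steps are $\textbf{(c)}\Longrightarrow\textbf{(d)}$ and $\textbf{(d)}\Longrightarrow\textbf{(a)}$, and for both I would route through the closed form. From (c), writing $\xc=\xc\star_M\ac\star_M\ac^\dag=(\xc\star_M\ac)\star_M\ac^\dag=\ac^-\star_M\ac\star_M\ac^*\star_M\ac\star_M\ac^\dag$ and collapsing the tail with $\ac^*\star_M\ac\star_M\ac^\dag=\ac^*$ recovers $\xc=\ac^-\star_M\ac\star_M\ac^*$; both equations of (d) are then routine verifications (the first via $\ac\star_M\ac^-\star_M\ac=\ac$, the second via $\ac\star_M\ac^*\star_M(\ac^\dag)^*=\ac\star_M\ac^\dag\star_M\ac=\ac$). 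For $\textbf{(d)}\Longrightarrow\textbf{(a)}$ I would right-multiply $\ac\star_M\xc\star_M(\ac^\dag)^*=\ac$ by $\ac^*$ and use $(\ac^\dag)^*\star_M\ac^*=\ac\star_M\ac^\dag$ to obtain $\ac\star_M\xc\star_M\ac\star_M\ac^\dag=\ac\star_M\ac^*$, then feed this into the first equation of (d), namely $\xc=\ac^-\star_M\ac\star_M\xc\star_M\ac\star_M\ac^\dag$, to conclude $\xc=\ac^-\star_M\ac\star_M\ac^*$.

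The main obstacle here is not any single implication but bookkeeping with the mixed $*$/$\dag$ terms: every reduction hinges on spotting where a Hermitian projector ($\ac\star_M\ac^\dag$ or $\ac^\dag\star_M\ac$) sits and which neighboring factor it absorbs, and factors such as $(\ac^\dag)^*$ are easy to mismanage. Proving the three absorption identities once at the outset is what keeps each implication to two or three lines; the only genuinely nonmechanical decision is to handle (c) and (d) by first recovering the explicit form $\ac^-\star_M\ac\star_M\ac^*$, rather than attempting to transform one condition pair directly into the next.
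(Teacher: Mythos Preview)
Your proposal is correct and follows essentially the same cyclic chain $\textbf{(a)}\Rightarrow\textbf{(b)}\Rightarrow\textbf{(c)}\Rightarrow\textbf{(d)}\Rightarrow\textbf{(a)}$ as the paper, with the same absorption identities doing the work at each step. The only cosmetic difference is that you isolate the three identities $\ac^*\star_M\ac\star_M\ac^\dag=\ac^*$, $\ac^*\star_M(\ac^\dag)^*=\ac^\dag\star_M\ac$, $(\ac^\dag)^*\star_M\ac^*=\ac\star_M\ac^\dag$ up front and explicitly pass through the closed form $\xc=\ac^-\star_M\ac\star_M\ac^*$ when handling $\textbf{(c)}\Rightarrow\textbf{(d)}$, whereas the paper performs the same reductions inline and leaves that intermediate recovery of $\xc$ implicit; your organization is arguably cleaner but the argument is the same.
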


\begin{proof}
\textbf{(a)}$\Longrightarrow$ \textbf{(b)} Let $\xc=\ac^-\star_M\ac\star_M\ac^*$. Then, it follows that $$\xc\star_M\ac\star_M\ac^\dag=\ac^-\star_M\ac\star_M\ac^*\star_M\ac\star_M\ac^\dag=\ac^-\star_M\ac\star_M\ac^*=\xc.$$
\textbf{(b)}$\Longrightarrow$ \textbf{(c)} By using $\xc\star_M\left(\ac^\dag\right)^*=\ac^-\star_M\ac$, one has
\begin{align*}
  \xc\star_M\ac&=\xc\star_M\left(\ac\star_M\ac^\dag\star_M\ac\right)=\xc\star_M\left(\ac\star_M\ac^\dag\right)^*\star_M\ac\\
  &=\xc\star_M\left(\ac^\dag\right)^*\star_M\ac^*\star_M\ac=\ac^-\star_M\ac\star_M\ac^*\star_M\ac.
\end{align*}
\textbf{(c)}$\Longrightarrow$ \textbf{(d)} If $\xc\star_M\ac\star_M\ac^\dag=\xc$ and $\xc\star_M\ac=\ac^-\star_M\ac\star_M\ac^*\star_M\ac$, then we can get
\begin{align*}
 \ac^-\star_M\ac\star_M\xc\star_M\ac\star_M\ac^\dag&=\ac^-\star_M\ac\star_M\ac^-\star_M\ac\star_M\ac^*\star_M\ac\star_M\ac^\dag\\
 &=\ac^-\star_M\ac\star_M\ac^*\star_M\ac\star_M\ac^\dag\\
 &=\ac^-\star_M\ac\star_M\ac^*
\end{align*}
and
\begin{align*}
  \ac\star_M\xc\star_M\left(\ac^\dag\right)^*&=\ac\star_M\xc\star_M\left(\ac^\dag\star_M\ac\star_M\ac^\dag\right)^*=\ac\star_M\xc\star_M\ac\star_M\ac^\dag\star_M\left(\ac^\dag\right)^*\\
  &=\ac\star_M\ac^-\star_M\ac\star_M\ac^*\star_M\ac\star_M\ac^\dag\star_M\left(\ac^\dag\right)^*=\ac\star_M\ac^*\star_M\ac\star_M\ac^\dag\star_M\left(\ac^\dag\right)^*\\
  &=\ac\star_M\ac^*\star_M\left(\ac\star_M\ac^\dag\right)^*\star_M\left(\ac^\dag\right)^*=\ac\star_M\ac^*\star_M\left(\ac^\dag\star_M\ac\star_M\ac^\dag\right)^*\\
  &=\ac\star_M\ac^*\star_M\left(\ac^\dag\right)^*=\ac\star_M\left(\ac^\dag\star_M\ac\right)^*\\
  &=\ac\star_M\ac^\dag\star_M\ac=\ac.
\end{align*}
\textbf{(d)}$\Longrightarrow$ \textbf{(a)} If $\ac^-\star_M\ac\star_M\xc\star_M\ac\star_M\ac^\dag=\xc$ and $\ac\star_M\xc\star_M\left(\ac^\dag\right)^*=\ac$, then we can get
\begin{align*}
  \xc=\ac^-\star_M\ac\star_M\xc\star_M\ac\star_M\ac^\dag=\ac^-\star_M\ac\star_M\xc\star_M\left(\ac^\dag\right)^*\star_M\ac^*=\ac^-\star_M\ac\star_M\ac^*.
\end{align*}

\end{proof}

\begin{theorem}
Let $\ac\in\mathbb{C}^{n_1\times n_2\times n_3}$ and $\ac^-\in \ac\left\{1\right\}$. Then, the following statements are equivalent.
\begin{description}
  \item  [(a)] $\ac\star_M\ac^*\star_M\ac=\ac$,
  \item  [(b)] $\ac\star_M\ac^{-,*}\star_M\ac=\ac$,
  \item  [(c)] $\ac^{-,*}\star_M\ac\star_M\ac^{-,*}=\ac^{-,*}$.
\end{description}
\begin{proof}
\textbf{(a)}$\Longrightarrow$ \textbf{(b)} If $\ac\star_M\ac^*\star_M\ac=\ac$, then $$\ac\star_M\ac^{-,*}\star_M\ac=\ac\star_M\ac^-\star_M\ac\star_M\ac^*\star_M\ac=\ac\star_M\ac^-\star_M\ac=\ac.$$
\textbf{(b)}$\Longrightarrow$ \textbf{(a)} If $\ac\star_M\ac^{-,*}\star_M\ac=\ac$, then
\begin{align*}
\ac= \ac\star_M\ac^{-,*}\star_M\ac
     =\ac\star_M\ac^-\star_M\ac\star_M\ac^*\star_M\ac
     =\ac\star_M\ac^*\star_M\ac.
\end{align*}
\textbf{(a)}$\Longrightarrow$ \textbf{(c)} Since $\ac\star_M\ac^*\star_M\ac=\ac$, one has
\begin{align*}
  \ac^{-,*}\star_M\ac\star_M\ac^{-,*}&=\ac^-\star_M\ac\star_M\ac^*\star_M\ac\star_M\ac^-\star_M\ac\star_M\ac^*\\
  &=\ac^-\star_M\ac\star_M\ac^-\star_M\ac\star_M\ac^*\\
  &=\ac^-\star_M\ac\star_M\ac^*=\ac^{-,*}.
\end{align*}
\textbf{(c)}$\Longrightarrow$ \textbf{(a)} If $\ac^{-,*}\star_M\ac\star_M\ac^{-,*}=\ac^{-,*}$, then
\begin{align*}
\ac^-\star_M\ac\star_M\ac^*&=\ac^{-,*}=\ac^{-,*}\star_M\ac\star_M\ac^{-,*}=\ac^-\star_M\ac\star_M\ac^*\star_M\ac\star_M\ac^-\star_M\ac\star_M\ac^*\\
 &= \ac^-\star_M\ac\star_M\ac^*\star_M\ac\star_M\ac^*.
\end{align*}
Multiplying $\ac$ by  both sides of this equation from the left gives $$\ac\star_M\ac^*=\ac\star_M\ac^*\star_M\ac\star_M\ac^*$$
 and then multiplying $\left(\ac^\dag\right)^*$ by both sides of this equation from the right yields $\ac=\ac\star_M\ac^*\star_M\ac$.

\end{proof}
\end{theorem}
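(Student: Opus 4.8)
The plan is to prove the three conditions equivalent by a cycle of implications, namely \textbf{(a)}$\Rightarrow$\textbf{(b)}$\Rightarrow$\textbf{(a)} and \textbf{(a)}$\Rightarrow$\textbf{(c)}$\Rightarrow$\textbf{(a)}, using throughout the definition $\ac^{-,*}=\ac^-\star_M\ac\star_M\ac^*$ together with the $\{1\}$-inverse relation $\ac\star_M\ac^-\star_M\ac=\ac$ and the associativity of $\star_M$.

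First I would handle \textbf{(a)}$\Rightarrow$\textbf{(b)}: substituting the definition of $\ac^{-,*}$ into $\ac\star_M\ac^{-,*}\star_M\ac$ yields $\ac\star_M\ac^-\star_M\ac\star_M\ac^*\star_M\ac$, and regrouping the trailing factors so that the interior block $\ac\star_M\ac^*\star_M\ac$ appears lets me apply \textbf{(a)} and then $\ac\star_M\ac^-\star_M\ac=\ac$ to obtain $\ac\star_M\ac^{-,*}\star_M\ac=\ac$. The converse \textbf{(b)}$\Rightarrow$\textbf{(a)} is its mirror image: expanding $\ac=\ac\star_M\ac^{-,*}\star_M\ac$ and collapsing the leading block $\ac\star_M\ac^-\star_M\ac$ into $\ac$ leaves precisely $\ac\star_M\ac^*\star_M\ac=\ac$.

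Next, for \textbf{(a)}$\Rightarrow$\textbf{(c)} I would expand $\ac^{-,*}\star_M\ac\star_M\ac^{-,*}$ fully into a word in $\ac^-,\ac,\ac^*$, use \textbf{(a)} to replace the interior occurrence of $\ac\star_M\ac^*\star_M\ac$ by $\ac$, and then apply $\ac\star_M\ac^-\star_M\ac=\ac$ once more, recovering $\ac^-\star_M\ac\star_M\ac^*=\ac^{-,*}$.

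The delicate direction, and the one I expect to be the main obstacle, is \textbf{(c)}$\Rightarrow$\textbf{(a)}. Starting from $\ac^{-,*}\star_M\ac\star_M\ac^{-,*}=\ac^{-,*}$ and simplifying the expanded right-hand side with $\ac\star_M\ac^-\star_M\ac=\ac$, I would reach the intermediate identity $\ac^-\star_M\ac\star_M\ac^*=\ac^-\star_M\ac\star_M\ac^*\star_M\ac\star_M\ac^*$, where the leftmost $\ac^-$ obstructs any direct cancellation. The resolution is to multiply on the left by $\ac$, absorbing $\ac\star_M\ac^-\star_M\ac$ into $\ac$ to get $\ac\star_M\ac^*=\ac\star_M\ac^*\star_M\ac\star_M\ac^*$, and then to multiply on the right by $(\ac^\dag)^*$. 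The crucial computation is $\ac^*\star_M(\ac^\dag)^*=(\ac^\dag\star_M\ac)^*=\ac^\dag\star_M\ac$, which combines Lemma \ref{ppaa} with the Hermitian symmetry of $\ac^\dag\star_M\ac$ granted by Moore--Penrose axiom (IV); this turns the left-hand side $\ac\star_M\ac^*\star_M(\ac^\dag)^*$ into $\ac\star_M\ac^\dag\star_M\ac=\ac$ and collapses the right-hand side to $\ac\star_M\ac^*\star_M\ac$, which is exactly \textbf{(a)}. The main care throughout is to track the associativity of $\star_M$ across each regrouping and to invoke the Moore--Penrose identities in their correctly conjugated forms.
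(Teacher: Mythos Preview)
Your proposal is correct and follows essentially the same route as the paper's proof: the same four implications \textbf{(a)}$\Leftrightarrow$\textbf{(b)} and \textbf{(a)}$\Leftrightarrow$\textbf{(c)}, the same expansions via $\ac^{-,*}=\ac^-\star_M\ac\star_M\ac^*$ and $\ac\star_M\ac^-\star_M\ac=\ac$, and for \textbf{(c)}$\Rightarrow$\textbf{(a)} the identical two-step cancellation of left-multiplying by $\ac$ and then right-multiplying by $(\ac^\dag)^*$. If anything, your write-up of the last step is slightly more explicit than the paper's in justifying $\ac^*\star_M(\ac^\dag)^*=\ac^\dag\star_M\ac$.
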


\begin{theorem}
Let $\ac\in\mathbb{C}^{n_1\times n_2\times n_3}$. Then,
$$\ac\left\{-,*\right\}\star_M\left(\ac^\dag\right)^*\star_M\ac\left\{-,*\right\}\subseteq\ac\left\{-,*\right\}.$$
\end{theorem}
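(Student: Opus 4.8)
The plan is to unwind the set product on the left-hand side: I take two arbitrary elements of $\ac\{-,*\}$ and show that their product through $(\ac^\dag)^*$ is again a $1$-Star inverse of $\ac$. By Definition \ref{d03}, every element of $\ac\{-,*\}$ has the form $\bc\star_M\ac\star_M\ac^*$ for some $\bc\in\ac\{1\}$. Hence I would fix $\bc,\cc\in\ac\{1\}$ and aim to establish
$$
\left(\bc\star_M\ac\star_M\ac^*\right)\star_M\left(\ac^\dag\right)^*\star_M\left(\cc\star_M\ac\star_M\ac^*\right)\in\ac\{-,*\}.
$$
Since $\bc,\cc$ are arbitrary, this inclusion yields the claimed containment of the whole set product.

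The first computational step is to simplify the inner factor $\ac^*\star_M\left(\ac^\dag\right)^*$. By Lemma \ref{ppaa} this equals $\left(\ac^\dag\star_M\ac\right)^*$, and axiom (IV) of \eqref{mpkk0} says $\ac^\dag\star_M\ac$ coincides with its own conjugate-transpose, so the factor reduces to $\ac^\dag\star_M\ac$. After substituting and using associativity of $\star_M$, the whole expression collapses to
$$
\bc\star_M\ac\star_M\ac^\dag\star_M\ac\star_M\cc\star_M\ac\star_M\ac^*.
$$
Next I would apply axiom (I) of \eqref{mpkk0}, namely $\ac\star_M\ac^\dag\star_M\ac=\ac$, to eliminate the central $\ac^\dag$, obtaining $\bc\star_M\ac\star_M\cc\star_M\ac\star_M\ac^*$.

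The final step is to recognise $\bc\star_M\ac\star_M\cc$ as a $\{1\}$-inverse of $\ac$. Indeed, using $\bc,\cc\in\ac\{1\}$ and associativity,
$$
\ac\star_M\left(\bc\star_M\ac\star_M\cc\right)\star_M\ac=\ac\star_M\cc\star_M\ac=\ac,
$$
so $\dc:=\bc\star_M\ac\star_M\cc\in\ac\{1\}$. The product then reads $\dc\star_M\ac\star_M\ac^*$, which is by Definition \ref{d03} an element of $\ac\{-,*\}$, completing the argument. I do not anticipate a real obstacle: the only points needing care are the identity $\ac^*\star_M(\ac^\dag)^*=\ac^\dag\star_M\ac$, which blends the conjugate-transpose rule of Lemma \ref{ppaa} with the Hermitian Moore-Penrose axiom, and the verification that $\bc\star_M\ac\star_M\cc$ remains a $\{1\}$-inverse; both are short consequences of the results already in place.
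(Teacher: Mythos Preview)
Your proof is correct and follows essentially the same route as the paper: pick two elements of $\ac\{-,*\}$, collapse $\ac^*\star_M(\ac^\dag)^*$ to $\ac^\dag\star_M\ac$ via Lemma~\ref{ppaa} and axiom~(IV), then use $\ac\star_M\ac^\dag\star_M\ac=\ac$. The only cosmetic difference is the last step: the paper applies $\ac\star_M\cc\star_M\ac=\ac$ directly to land back on $\bc\star_M\ac\star_M\ac^*$, whereas you package $\bc\star_M\ac\star_M\cc$ as a new $\{1\}$-inverse; both reach the same conclusion.
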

\begin{proof}
Assume that $\uc,~\vc\in\ac\left\{-,*\right\}$, and let $\uc=\ac^-\star_M\ac\star_M\ac^*$,  $\vc=\ac^{\left(1\right)}\star_M\ac\star_M\ac^\dag$, for some $\ac^-,~\ac^{\left(1\right)}\in\ac\left\{1\right\}$. Denote $\wc=\uc\star_M\left(\ac^\dag\right)^*\star_M\vc$. Then,
\begin{align*}
  \wc&=\ac^-\star_M\ac\star_M\ac^*\star_M\left(\ac^\dag\right)^*\star_M\ac^{\left(1\right)}\star_M\ac\star_M\ac^*\\
  &=\ac^-\star_M\ac\star_M\ac^\dag\star_M\ac\star_M\ac^{\left(1\right)}\star_M\ac\star_M\ac^*\\
  &=\ac^-\star_M\ac\star_M\ac^{\left(1\right)}\star_M\ac\star_M\ac^*\\
  &=\ac^-\star_M\ac\star_M\ac^*\in\ac\left\{-,*\right\}.
\end{align*}
So, $\ac\left\{-,*\right\}\star_M\left(\ac^\dag\right)^*\star_M\ac\left\{-,*\right\}\subseteq\ac\left\{-,*\right\}$.
\end{proof}
\section{Applications to multilinear systems}

Solutions of multilinear systems in terms of the 1-MP inverse, the 1-D inverse and the 1-Star inverse are given as an application in this part. The application of the 1-MP inverse multilinear system is given as follows.

\begin{theorem}
Let $\ac\in\mathbb{C}^{n_1\times n_2\times n_3}$ and  $\bc\in\mathbb{C}^{n_1\times 1\times n_3}$.  Then, for arbitrary $\mathcal{Z}\in\mathbb{C}^{n_2\times 1\times n_3}$,
\begin{equation}\label{ddgg}
\ac\star_M\xc=\ac\star_M\ac^{\dag}\star_M\bc,
\end{equation}
is solvable and $\xc=\ac^{-,\dag}\star_M\bc+\left(\mathcal{I}-\ac^{-}\star_M\ac\right)\star_M\mathcal{Z}$ are the solutions of (\ref{ddgg}).
\begin{proof}
Clearly, $\ac^{-,\dag}\star_M\bc$ satisfies (\ref{ddgg}). Hence, the system (\ref{ddgg}) is consistent. Now,
$$\ac\star_M\xc
=\ac\star_M\ac^{-,\dag}
\star_M\bc+\ac\star_M\left(\mathcal{I}-\ac^{-}
\star_M\ac\right)\star_M\mathcal{Z}=\ac\star_M\ac^{\dag}\star_M\bc.$$
Also, if $\xc$ is a solution of (\ref{ddgg}), then
$$\xc=\ac^{-,\dag}\star_M\bc+\left(\mathcal{I}-\ac^{-}\star_M\ac\right)\star_M\xc.$$
So, the general solution of the system (\ref{ddgg}) is given by
$$\xc=\ac^{-,\dag}\star_M\bc+\left(\mathcal{I}-\ac^{-}\star_M\ac\right)\star_M\mathcal{Z}.$$
\end{proof}

\end{theorem}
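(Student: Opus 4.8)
The plan is to describe the solution set of the linear tensor equation \eqref{ddgg} in the usual ``particular solution plus homogeneous part'' form, working entirely within the M\nobreakdash-product algebra. Three things must be checked: that \eqref{ddgg} is consistent, that every member of the claimed family is a solution, and that the family exhausts all solutions. Two identities carry the whole argument: the relation $\ac\star_M\ac^{-,\dag}=\ac\star_M\ac^{\dag}$ from Lemma~\ref{111}(b), and the defining property $\ac\star_M\ac^{-}\star_M\ac=\ac$ of a $\{1\}$-inverse, together with the definition $\ac^{-,\dag}=\ac^{-}\star_M\ac\star_M\ac^{\dag}$ from Definition~\ref{d07}.

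First I would settle consistency together with the forward inclusion. Taking the explicit particular solution $\xc_0=\ac^{-,\dag}\star_M\bc$ and multiplying on the left by $\ac$, Lemma~\ref{111}(b) collapses $\ac\star_M\ac^{-,\dag}$ to $\ac\star_M\ac^{\dag}$, so $\ac\star_M\xc_0=\ac\star_M\ac^{\dag}\star_M\bc$ and \eqref{ddgg} is solvable. For the full family, I would substitute $\xc=\ac^{-,\dag}\star_M\bc+(\mathcal{I}-\ac^{-}\star_M\ac)\star_M\mathcal{Z}$ into the left-hand side and expand using the distributive and associative laws for $\star_M$. The first summand reproduces $\ac\star_M\ac^{\dag}\star_M\bc$ as above, while the second becomes $(\ac-\ac\star_M\ac^{-}\star_M\ac)\star_M\mathcal{Z}$, which vanishes since $\ac\star_M\ac^{-}\star_M\ac=\ac$. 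Hence every such $\xc$ solves \eqref{ddgg}, for arbitrary $\mathcal{Z}$.

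For completeness I would recover an arbitrary solution from the formula by a suitable choice of the free parameter. Given any $\xc$ with $\ac\star_M\xc=\ac\star_M\ac^{\dag}\star_M\bc$, the natural move is to take $\mathcal{Z}=\xc$: the homogeneous term then reads $\xc-\ac^{-}\star_M\ac\star_M\xc$, and replacing $\ac\star_M\xc$ by $\ac\star_M\ac^{\dag}\star_M\bc$ turns $\ac^{-}\star_M\ac\star_M\xc$ into $\ac^{-}\star_M\ac\star_M\ac^{\dag}\star_M\bc=\ac^{-,\dag}\star_M\bc$. This cancels the particular term and leaves exactly $\xc$, so $\xc$ lies in the claimed family.

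There is no genuine obstacle here; the content is purely formal manipulation. The only point requiring care is keeping the associativity and distributivity bookkeeping for $\star_M$ straight, and keeping the roles of the two identities distinct: the $\{1\}$-inverse cancellation $\ac\star_M\ac^{-}\star_M\ac=\ac$ annihilates the homogeneous term, whereas Lemma~\ref{111}(b) handles the particular term. I would also note that the argument uses nothing about $\bc$ beyond conformability, so the same description holds verbatim if $\bc$ is replaced by a tensor $\bc\in\mathbb{C}^{n_1\times p\times n_3}$ with several lateral slices.
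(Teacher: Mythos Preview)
Your proposal is correct and follows essentially the same route as the paper: verify that $\ac^{-,\dag}\star_M\bc$ is a particular solution via $\ac\star_M\ac^{-,\dag}=\ac\star_M\ac^{\dag}$, kill the homogeneous term using $\ac\star_M\ac^{-}\star_M\ac=\ac$, and recover an arbitrary solution by choosing $\mathcal{Z}=\xc$. The paper's proof is terser but identical in structure, and your explicit invocation of Lemma~\ref{111}(b) and Definition~\ref{d07} makes the dependencies clearer.
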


\begin{theorem}
Let $\ac\in\mathbb{C}^{n_1\times n_2\times n_3}$ and $\ac^-$ be a $\{1\}$-inverse of $\ac$, then for some $\zc\in\mathbb{C}^{n_2\times n_1\times n_3}$,
\begin{equation}\label{zzhh}
\ac^{-,\dag}=\xc\star_M\ac\star_M\ac^{\dag},
\end{equation}
is solvable and $\xc=\ac^{-}+\zc\star_M\left(\mathcal {I}-\ac\star_M\ac^{\dag}\right)$ is one  of the solutions of (\ref{zzhh}).
\begin{proof}
Suppose $\xc=\ac^{-}+\zc\star_M\left(\mathcal {I}-\ac\star_M\ac^{\dag}\right)$, then
\begin{align*}
 \xc\star_M\ac\star_M\ac^\dag&=\ac^{-}\star_M\ac\star_M\ac^{\dag}+\zc\star_M\left(\mathcal {I}-\ac\star_M\ac^{\dag}\right)\star_M\ac\star_M\ac^\dag\\
&=\ac^-\star_M\ac\star_M\ac^{\dag}=\ac^{-,\dag}.
\end{align*}
On the contrary, let $\ac^{-,\dag}=\xc\star_M\ac\star_M\ac^{\dag}$. Obviously, $\ac^-$ is a particular solution of $\xc\star_M\ac\star_M\ac^{\dag}=\ac^{-,\dag}$. If $\zc$ is any solution of
$\xc\star_M\ac\star_M\ac^{\dag}=\mathcal {O}$, then $\zc\star_M\ac\star_M\ac^\dag=\mathcal {O}$. Thus we can express $\zc$ as $$\zc=\zc-\zc\star_M\ac\star_M\ac^\dag=\zc\star_M\left(\mathcal {I}-\ac\star_M\ac^\dag\right).$$
Hence the general solution of $\xc\star_M\ac\star_M\ac^\dag=\mathcal {O}$ is given by $\xc=\zc\star_M\left(\mathcal {I}-\ac\star_M\ac^\dag\right)$. Consequently, $$\xc=\ac^-+\zc\star_M\left(\mathcal {I}-\ac\star_M\ac^\dag\right)$$ is the general solution of $\xc\star_M\ac\star_M\ac^{\dag}=\ac^{-,\dag}$.
\end{proof}
\end{theorem}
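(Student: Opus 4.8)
The plan is to read (\ref{zzhh}) as a linear tensor equation in the unknown $\xc$ and to solve it by the standard particular-plus-homogeneous decomposition. First I would unfold the right-hand side via Definition \ref{d07}, writing $\ac^{-,\dag}=\ac^-\star_M\ac\star_M\ac^\dag$, so that (\ref{zzhh}) becomes $\xc\star_M\ac\star_M\ac^\dag=\ac^-\star_M\ac\star_M\ac^\dag$. This already displays $\xc=\ac^-$ as one solution, establishing that the equation is solvable; it then remains to verify the proposed family and to understand the homogeneous part.

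The one computational ingredient I would isolate is the idempotency of $\ac\star_M\ac^\dag$: using associativity of the $\star_M$ product together with Moore-Penrose condition (I) of (\ref{mpkk0}), one has $\ac\star_M\ac^\dag\star_M\ac\star_M\ac^\dag=\left(\ac\star_M\ac^\dag\star_M\ac\right)\star_M\ac^\dag=\ac\star_M\ac^\dag$. Substituting $\xc=\ac^-+\zc\star_M(\ic-\ac\star_M\ac^\dag)$ into the left-hand side of (\ref{zzhh}) then gives
\[
\xc\star_M\ac\star_M\ac^\dag=\ac^-\star_M\ac\star_M\ac^\dag+\zc\star_M\left(\ac\star_M\ac^\dag-\ac\star_M\ac^\dag\star_M\ac\star_M\ac^\dag\right),
\]
and the idempotency collapses the parenthesis to the zero tensor, leaving exactly $\ac^{-,\dag}$. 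Hence for every choice of $\zc$ the stated tensor is a solution of (\ref{zzhh}).

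To round out the picture I would describe all solutions. Any tensor $\zc$ solving the homogeneous equation $\xc\star_M\ac\star_M\ac^\dag=\mathcal{O}$ satisfies $\zc\star_M\ac\star_M\ac^\dag=\mathcal{O}$, whence $\zc=\zc-\zc\star_M\ac\star_M\ac^\dag=\zc\star_M(\ic-\ac\star_M\ac^\dag)$; conversely every tensor of this form is annihilated by $\star_M\ac\star_M\ac^\dag$ via the same idempotency computation. Adding the particular solution $\ac^-$ therefore yields the full solution set, of which $\xc=\ac^-+\zc\star_M(\ic-\ac\star_M\ac^\dag)$ is the announced representative. I do not foresee any genuine obstacle here: the argument is a routine particular-plus-homogeneous computation, and the only point demanding care is to group the products so that condition (I) of (\ref{mpkk0}) applies cleanly and the correction term vanishes.
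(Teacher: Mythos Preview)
Your proposal is correct and follows essentially the same route as the paper's own proof: verify that the candidate $\xc=\ac^-+\zc\star_M(\ic-\ac\star_M\ac^\dag)$ satisfies (\ref{zzhh}) via the idempotency of $\ac\star_M\ac^\dag$, then obtain the general solution by the particular-plus-homogeneous decomposition using $\zc=\zc\star_M(\ic-\ac\star_M\ac^\dag)$. The only minor difference is that you make the idempotency step explicit by invoking condition (I) of (\ref{mpkk0}), whereas the paper leaves this implicit.
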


Next, the application of the multilinear system related to the 1-D inverse is introduced.

\begin{theorem}
Let $\ac\in\mathbb{C}^{n_1\times n_1\times n_3}$ and  $\bc\in\mathbb{C}^{n_1\times 1\times n_3}$. Suppose  $\ind\left(\ac\right)=k$.  Then, for arbitrary $\mathcal{Z}\in\mathbb{C}^{n_1\times 1\times n_3}$,
\begin{equation}\label{ddyy}
\ac\star_M\xc=\ac\star_M\ac^D\star_M\bc,
\end{equation}
is solvable and $\xc=\ac^{-,\ D}\star_M\bc+\left(\mathcal{I}-\ac^{-}\star_M\ac\right)\star_M\mathcal{Z}$ are some of the solutions of (\ref{ddyy}).
\begin{proof}
Obviously, $\ac^{-}\star_M\ac\star_M\ac^D\star_M\bc=\ac^{-,\ D}\star_M\bc$ satisfies (\ref{ddyy}). Therefore, the system (\ref{ddyy}) is solvable. Now,
$$\ac\star_M\xc
=\ac\star_M\ac^{-,\ D}
\star_M\bc+\ac\star_M\left(\mathcal{I}-\ac^{-}
\star_M\ac\right)\star_M\mathcal{Z}=\ac\star_M\ac^D\star_M\bc.$$
Meanwhile, if $\xc$ is a solution of (\ref{ddyy}), then
$$\xc=\ac^{-,\ D}\star_M\bc+\left(\mathcal{I}-\ac^{-}\star_M\ac\right)\star_M\xc.$$
Hence, the general solution of the system (\ref{ddyy}) is given by
$$\xc=\ac^{-,\ D}\star_M\bc+\left(\mathcal{I}-\ac^{-}\star_M\ac\right)\star_M\mathcal{Z}.$$
\end{proof}

\end{theorem}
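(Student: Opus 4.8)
The plan is to treat (\ref{ddyy}) as a linear tensor equation and argue by the usual particular-plus-homogeneous decomposition, exactly as was done for the 1-MP system (\ref{ddgg}). First I would produce an explicit particular solution to guarantee consistency: the natural candidate is $\ac^{-,\ D}\star_M\bc$. Unwinding the definition $\ac^{-,\ D}=\ac^-\star_M\ac\star_M\ac^D$ and applying the $\{1\}$-inverse identity $\ac\star_M\ac^-\star_M\ac=\ac$, together with associativity of $\star_M$, collapses $\ac\star_M\ac^-\star_M\ac\star_M\ac^D\star_M\bc$ to $\ac\star_M\ac^D\star_M\bc$, which is precisely the right-hand side. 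This settles solvability.

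Second, I would verify that the entire proposed family solves (\ref{ddyy}). The crucial observation is that the homogeneous correction is annihilated on the left by $\ac$, since
\[
\ac\star_M\left(\mathcal{I}-\ac^-\star_M\ac\right)=\ac-\ac\star_M\ac^-\star_M\ac=\mathcal{O}.
\]
Hence applying $\ac\star_M(\cdot)$ to $\ac^{-,\ D}\star_M\bc+\left(\mathcal{I}-\ac^-\star_M\ac\right)\star_M\mathcal{Z}$ kills the $\mathcal{Z}$-term and leaves $\ac\star_M\ac^{-,\ D}\star_M\bc=\ac\star_M\ac^D\star_M\bc$, reusing the first computation.

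Third, for the reverse containment I would show that every solution already has the displayed shape, by choosing $\mathcal{Z}=\xc$. Left-multiplying the hypothesis $\ac\star_M\xc=\ac\star_M\ac^D\star_M\bc$ by $\ac^-$ turns $\ac^-\star_M\ac\star_M\xc$ into $\ac^{-,\ D}\star_M\bc$, after which
\[
\ac^{-,\ D}\star_M\bc+\left(\mathcal{I}-\ac^-\star_M\ac\right)\star_M\xc=\xc,
\]
so $\xc$ indeed belongs to the family. Combining the three steps yields the asserted description of the solution set.

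The computations are all routine, so I do not anticipate a genuine obstacle; the only point demanding attention is bookkeeping---invoking $\ac\star_M\ac^-\star_M\ac=\ac$ and the definition of $\ac^{-,\ D}$ at exactly the right places. It is worth remarking that, because $\ac^-$ is merely a fixed $\{1\}$-inverse rather than the Moore--Penrose inverse, the projector $\mathcal{I}-\ac^-\star_M\ac$ genuinely depends on the chosen $\ac^-$, which is why the theorem speaks of \emph{some} of the solutions associated with that choice.
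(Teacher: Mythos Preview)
Your proposal is correct and follows essentially the same three-step argument as the paper's proof: exhibit $\ac^{-,\ D}\star_M\bc$ as a particular solution, verify the whole family satisfies (\ref{ddyy}) via $\ac\star_M(\mathcal{I}-\ac^-\star_M\ac)=\mathcal{O}$, and recover any solution by taking $\mathcal{Z}=\xc$. The only difference is that you spell out the underlying identities a bit more explicitly than the paper does.
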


\begin{theorem}
Let $\ac\in\mathbb{C}^{n_1\times n_1\times n_3}$, ind$\left(\ac\right)=k$ and $\ac^-$ be a fixed $\left\{1\right\}$-inverse of $\ac$, then for some $\zc\in\mathbb{C}^{n_1\times n_1\times n_3}$,
\begin{equation}\label{zzgg}
\ac^{-,\ D}=\xc\star_M\ac\star_M\ac^{D},
\end{equation}
is solvable and $\xc=\ac^{-}+\mathcal {Z}\star_M\left(\mathcal {I}-\ac\star_M\ac^{D}\right)$ is one  of the solutions of (\ref{zzgg}).
\begin{proof}
If $\xc=\ac^{-}+\mathcal {Z}\star_M\left(\mathcal {I}-\ac\star_M\ac^{D}\right)$, then
\begin{align*}
 \xc\star_M\ac\star_M\ac^D&=\ac^{-}\star_M\ac\star_M\ac^D+\zc\star_M\left(\mathcal {I}-\ac\star_M\ac^{D}\right)\star_M\ac\star_M\ac^D\\
&=\ac^-\star_M\ac\star_M\ac^D=\ac^{-,\ D}.
\end{align*}
Suppose $\ac^{-,\ D}=\xc\star_M\ac\star_M\ac^{D}$. Obviously, $\ac^-$ is a particular solution of $\xc\star_M\ac\star_M\ac^{D}=\ac^{-,\ D}$. If $\zc$ is any solution of
$\xc\star_M\ac\star_M\ac^{D}=\mathcal {O}$, then $\zc\star_M\ac\star_M\ac^D=\mathcal {O}$. Thus, one can express $\zc$ as $$\zc=\zc-\zc\star_M\ac\star_M\ac^D=\zc\star_M\left(\mathcal {I}-\ac\star_M\ac^D\right).$$
Therefore, the general solution of $\xc\star_M\ac\star_M\ac^D=\mathcal {O}$ is given by $\xc=\zc\star_M\left(\mathcal {I}-\ac\star_M\ac^D\right)$. Consequently, $$\xc=\ac^-+\zc\star_M\left(\mathcal {I}-\ac\star_M\ac^D\right)$$ is the general solution of $\xc\star_M\ac\star_M\ac^{D}=\ac^{-,\ D}$.
\end{proof}
\end{theorem}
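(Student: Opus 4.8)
The plan is to follow the same particular-plus-homogeneous-solution strategy used in the preceding theorem on the 1-MP multilinear system, with the projector $\ac \star_M \ac^D$ now playing the role that $\ac \star_M \ac^\dag$ played there. The central observation I would record first is that $\ac \star_M \ac^D$ is idempotent: invoking equation (II) of the Drazin-inverse definition \eqref{mpk2}, one has $\left(\ac \star_M \ac^D\right) \star_M \left(\ac \star_M \ac^D\right) = \ac \star_M \left(\ac^D \star_M \ac \star_M \ac^D\right) = \ac \star_M \ac^D$. From this it follows immediately that $\left(\ic - \ac \star_M \ac^D\right) \star_M \ac \star_M \ac^D = \mathcal{O}$, which is the single algebraic fact driving both halves of the argument.

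For solvability together with the sufficiency direction, I would substitute $\xc = \ac^- + \zc \star_M \left(\ic - \ac \star_M \ac^D\right)$ into the right-hand side of \eqref{zzgg} and expand using bilinearity and associativity of $\star_M$. The second summand collapses because $\zc \star_M \left(\ic - \ac \star_M \ac^D\right) \star_M \ac \star_M \ac^D = \mathcal{O}$ by the idempotency remark, leaving $\xc \star_M \ac \star_M \ac^D = \ac^- \star_M \ac \star_M \ac^D$, which equals $\ac^{-,\ D}$ by the very definition of the 1-D inverse. This simultaneously shows that the equation is consistent and that every member of the proposed family solves it.

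For the converse I would argue that $\ac^-$ is itself a particular solution, again by the definition $\ac^{-,\ D} = \ac^- \star_M \ac \star_M \ac^D$, so the general solution is $\ac^-$ plus the general solution of the homogeneous equation $\xc \star_M \ac \star_M \ac^D = \mathcal{O}$. If $\zc$ is any solution of this homogeneous equation, then $\zc \star_M \ac \star_M \ac^D = \mathcal{O}$ gives $\zc = \zc - \zc \star_M \ac \star_M \ac^D = \zc \star_M \left(\ic - \ac \star_M \ac^D\right)$; conversely, every tensor of the form $\zc \star_M \left(\ic - \ac \star_M \ac^D\right)$ is annihilated on the right by $\ac \star_M \ac^D$, again by idempotency. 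Hence the homogeneous solution set is exactly $\left\{\zc \star_M \left(\ic - \ac \star_M \ac^D\right)\right\}$, and adding the particular solution $\ac^-$ yields the asserted general form.

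I do not anticipate a serious obstacle; the only point requiring care is the bookkeeping of the Drazin-inverse identities, in particular keeping straight that the relevant projector is the idempotent $\ac \star_M \ac^D$ and verifying that it annihilates $\ic - \ac \star_M \ac^D$ on the correct side, since $\star_M$ is noncommutative. Everything else is a direct transcription of the matrix-theoretic template into the M-product setting, relying only on associativity and bilinearity of $\star_M$ together with the defining relations of $\ac^D$ and $\ac^{-,\ D}$.
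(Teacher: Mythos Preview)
Your proposal is correct and follows essentially the same particular-plus-homogeneous strategy as the paper's own proof: verify the candidate solution by direct substitution using that $(\ic-\ac\star_M\ac^D)\star_M\ac\star_M\ac^D=\mathcal{O}$, then recover the general form by noting that $\ac^-$ is a particular solution and every homogeneous solution $\zc$ satisfies $\zc=\zc\star_M(\ic-\ac\star_M\ac^D)$. Your explicit remark that the idempotency of $\ac\star_M\ac^D$ (from equation (II) of the Drazin definition) is what drives both cancellations is a helpful clarification that the paper leaves implicit.
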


Finally, the application of the multilinear system related to the 1-Star inverse is introduced.

\begin{theorem}\label{to}
Let $\ac\in\mathbb{C}^{n_1\times n_2\times n_3}$ and  $\bc\in\mathbb{C}^{n_1\times 1\times n_3}$.   Then, for arbitrary $\mathcal{Z}\in\mathbb{C}^{n_2\times 1\times n_3}$,
\begin{equation}\label{ddxx}
\ac\star_M\xc=\ac\star_M\ac^*\star_M\bc,
\end{equation}
is solvable and $\xc=\ac^{-,*}\star_M\bc+\left(\mathcal{I}-\ac^{-}\star_M\ac\right)\star_M\mathcal{Z}$ are some of the solutions of (\ref{ddxx}).
\begin{proof}
Obviously, $\ac^{-}\star_M\ac\star_M\ac^*\star_M\bc$ satisfies (\ref{ddxx}). So, the system (\ref{ddxx}) is consistent. Now,
$$\ac\star_M\xc
=\ac\star_M\ac^{-,*}
\star_M\bc+\ac\star_M\left(\mathcal{I}-\ac^{-}
\star_M\ac\right)\star_M\mathcal{Z}=\ac\star_M\ac^*\star_M\bc.$$
Also, if $\xc$ is a solution of (\ref{ddxx}), then
$$\xc=\ac^{-,*}\star_M\bc+\left(\mathcal{I}-\ac^{-}\star_M\ac\right)\star_M\xc.$$
So, the general solution of the system (\ref{ddxx}) is given by
$$\xc=\ac^{-,*}\star_M\bc+\left(\mathcal{I}-\ac^{-}\star_M\ac\right)\star_M\mathcal{Z}.$$
\end{proof}

\end{theorem}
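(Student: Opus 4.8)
The plan is to reproduce, for the $1$-Star inverse, the same three-step template already used for the $1$-MP and $1$-D systems: exhibit a particular solution to secure consistency, verify that the whole proposed family satisfies (\ref{ddxx}), and then run the converse so that this family is recognized as the complete solution set.

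First I would establish solvability by taking $\ac^{-,*}\star_M\bc$ as a candidate particular solution. Expanding $\ac^{-,*}=\ac^-\star_M\ac\star_M\ac^*$ from Definition~\ref{d03} and invoking the defining identity $\ac\star_M\ac^-\star_M\ac=\ac$ of a $\{1\}$-inverse, I obtain
\begin{equation*}
\ac\star_M\ac^{-,*}\star_M\bc=\ac\star_M\ac^-\star_M\ac\star_M\ac^*\star_M\bc=\ac\star_M\ac^*\star_M\bc,
\end{equation*}
which shows (\ref{ddxx}) is consistent.

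Next I would check that each $\xc=\ac^{-,*}\star_M\bc+(\mathcal{I}-\ac^-\star_M\ac)\star_M\mathcal{Z}$ solves the system. Using distributivity of $\star_M$ over addition, $\ac\star_M\xc$ splits into $\ac\star_M\ac^{-,*}\star_M\bc$, already equal to $\ac\star_M\ac^*\star_M\bc$ by the previous step, plus the term $\ac\star_M(\mathcal{I}-\ac^-\star_M\ac)\star_M\mathcal{Z}$; the latter is zero since $\ac\star_M(\mathcal{I}-\ac^-\star_M\ac)=\ac-\ac\star_M\ac^-\star_M\ac=\mathcal{O}$. Thus $\ac\star_M\xc=\ac\star_M\ac^*\star_M\bc$ for every $\mathcal{Z}$.

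Finally, to see these exhaust the solutions, I would take an arbitrary solution $\xc$ and write $\xc=\ac^-\star_M\ac\star_M\xc+(\mathcal{I}-\ac^-\star_M\ac)\star_M\xc$. Substituting $\ac\star_M\xc=\ac\star_M\ac^*\star_M\bc$ converts the first summand into $\ac^-\star_M\ac\star_M\ac^*\star_M\bc=\ac^{-,*}\star_M\bc$, so $\xc$ already has the displayed form with $\mathcal{Z}=\xc$. I do not anticipate a genuine obstacle, as the whole argument is algebraic and mirrors the two preceding theorems; the only delicate point is to split $\xc$ through $\ac^-\star_M\ac$ rather than through $\ac^{-,*}\star_M\ac$, since it is precisely $\ac^-\star_M\ac$ that makes the complementary term vanish under left multiplication by $\ac$ and reproduces $\ac^{-,*}\star_M\bc$ on a solution.
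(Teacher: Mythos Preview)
Your proposal is correct and follows essentially the same three-step argument as the paper: exhibit $\ac^{-,*}\star_M\bc$ as a particular solution via $\ac\star_M\ac^-\star_M\ac=\ac$, verify the whole family satisfies (\ref{ddxx}) because $\ac\star_M(\mathcal{I}-\ac^-\star_M\ac)=\mathcal{O}$, and recover every solution by the splitting $\xc=\ac^-\star_M\ac\star_M\xc+(\mathcal{I}-\ac^-\star_M\ac)\star_M\xc$. If anything, you spell out the distributivity and the vanishing of the complementary term more explicitly than the paper does.
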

\begin{example}
Let $\ac\in\mathbb{C}^{2\times3\times3},~\bc\in\mathbb{C}^{2\times 1\times3},~M\in\mathbb{C}^{3\times3},~\mathcal{Z}\in\mathbb{C}^{3\times1\times3}$ with entries
$$\ac^{\left(1\right)}=
\begin{bmatrix}
0&0&1\\
1&-1&1
\end{bmatrix},~
\ac^{\left(2\right)}=
\begin{bmatrix}
1&0&0\\
0&0&1
\end{bmatrix},~
\ac^{\left(3\right)}=
\begin{bmatrix}
0&0&0\\
0&1&-1
\end{bmatrix},
$$
$$
\mathcal B^{\left(1\right)}=
\begin{bmatrix}
0\\
2
\end{bmatrix},~
\mathcal B^{\left(2\right)}=
\begin{bmatrix}
0\\
1
\end{bmatrix},~
\mathcal B^{\left(3\right)}=
\begin{bmatrix}
1\\
-1
\end{bmatrix},$$ $$\mathcal Z^{\left(1\right)}=
\begin{bmatrix}
z_{11}\\
z_{12}\\
z_{13}
\end{bmatrix},~
\mathcal Z^{\left(2\right)}=
\begin{bmatrix}
z_{21}\\
z_{22}\\
z_{23}
\end{bmatrix},~
\mathcal Z^{\left(3\right)}=
\begin{bmatrix}
z_{31}\\
z_{32}\\
z_{33}
\end{bmatrix}, \
M=
\begin{bmatrix}
1 & 0 & 1\\
0 & 1 & 0 \\
0 & 1 & 1
\end{bmatrix}.
$$
Firstly, we get the $\widehat{\ac^-}$ and 1-Star inverses by using Algorithm \ref{a3}, that is,
$$
\left(\widehat{\ac^-}\right)^{\left(1\right)}=
\begin{bmatrix}
0       & 1\\
a_{21}  & a_{22}\\
1       & 0
\end{bmatrix},~
\left(\widehat{\ac^-}\right)^{\left(2\right)}=
\begin{bmatrix}
1       & 0\\
b_{21}  & b_{22}\\
1       & 0
\end{bmatrix},~
\left(\widehat{\ac^-}\right)^{\left(3\right)}=
\begin{bmatrix}
1      & 0\\
0      & 1\\
c_{31} & c_{32}
\end{bmatrix},
$$
$$
\left(\ac^{-,*}\right)^{\left(1\right)}=
\begin{bmatrix}
0              &1               \\
a_{21}+b_{21}  & a_{22}+b_{22}-1 \\
1-c_{31}       &   1-c_{32}
\end{bmatrix},~
\left(\ac^{-,*}\right)^{\left(2\right)}=
\begin{bmatrix}
1     & 0 \\
b_{21}& b_{22}\\
   0  & 1
\end{bmatrix},~
\left(\ac^{-,*}\right)^{\left(3\right)}=
\begin{bmatrix}
0       & 0                 \\
-b_{21} & 1-b_{22} \\
c_{31}  & c_{32}-1
\end{bmatrix}.
$$
Then, we calculate $\xc=\ac^{-,*}\star_M\bc+\left(\mathcal{I}-\ac^{-}\star_M\ac\right)\star_M\mathcal{Z}$ by Theorem \ref{to}, which is
$$
\mathcal X^{\left(1\right)}=\begin{bmatrix}
0  \\
a_{21}+a_{22}+b_{22}+z_{12}+z_{22}+z_{32}-a_{22}\left(z_{11}+z_{31}\right)-a_{21}\left(z_{13}+z_{33}\right)-b_{21}z_{21}-b_{22}z_{23} \\
c_{31}\left(z_{21}+z_{31}\right)-z_{23}-z_{33}-c_{31}+c_{32}\left(z_{22}+z_{32}\right)+2
\end{bmatrix},$$
$$\mathcal X^{\left(2\right)}=\begin{bmatrix}0\\b_{22}+z_{22}-b_{21}z_{21}-b_{22}z_{23}\\
1\end{bmatrix},$$
$$\mathcal X^{\left(3\right)}=
\begin{bmatrix}
1\\
b_{21}z_{21}-z_{22}-b_{22}+b_{22}z_{23}\\
c_{31}+z_{23}+z_{33}-c_{31}\left(z_{21}+z_{31}\right)-c_{32}\left(z_{22}+z_{32}\right)-1
\end{bmatrix}.
$$
\end{example}

\section{Conclusion}

This work presents and studies the 1-$\Gamma$ inverse under the M-product. Firstly, the singular value decomposition is used to present an expression for the 1-MP inverse. An equivalent numerical algorithm for computing this inverse is developed. Next, some 1-MP inverse characterization is shown. Secondly, the 1-D inverse of the tensor is further explained. A partial description of the 1-D inverse is provided. Thirdly, the 1-Star inverse under the M-product operation is described. The 1-Star inverse's numerical computation technique is also constructed, and the associated attributes are provided. Based on these work, the solutions to the 1-MP inverses, 1-D inverses, and 1-Star inverses of the multilinear equations are provided, along with an example program that computes the corresponding 1-$\Gamma$ inverse.

~

{\bf\large Funding}

~

This work was supported by "The Special Fund for Science and Technological Bases and Talents of Guangxi" (No. GUIKE AD21220024).

~

{\bf\large Declarations}

~

\textbf{Conflict of interest} No potential conflict of interest was reported by the authors.

~

\end{spacing}

\end{document}